\documentclass{article}
\usepackage[left=1in, right=1in, top=1in, bottom=1in]{geometry}
\usepackage{enumitem}
\usepackage{authblk}
\usepackage{graphicx}	
\usepackage{amsmath, amsthm, amssymb,mathtools}
\usepackage{xspace}
\usepackage{comment}
\usepackage{hyperref}
\usepackage[dvipsnames]{xcolor}
\usepackage{algorithm}
\usepackage{algpseudocode}
\usepackage{caption}

\numberwithin{equation}{section}

\newcommand\omitstuff[1]{}

\newcommand{\E}{\mathbb{E}}

\theoremstyle{plain}
\newtheorem{theorem}{Theorem}[section]
\newtheorem{proposition}[theorem]{Proposition}
\newtheorem{corollary}[theorem]{Corollary}
\newtheorem{lemma}[theorem]{Lemma}

\theoremstyle{definition}
\newtheorem{definition}[theorem]{Definition}
\newtheorem{remark}[theorem]{Remark}

\newtheorem{problem}[theorem]{Problem}
\newtheorem{ex}[theorem]{Example}
\newtheorem{observation}[theorem]{Observation}

\newcommand{\Lem}[1]{Lemma~\ref{#1}\xspace}
\newcommand{\Cor}[1]{Corollary~\ref{#1}\xspace}
\newcommand{\Prop}[1]{Proposition~\ref{#1}\xspace}
\newcommand{\Thm}[1]{Theorem~\ref{#1}\xspace}
\newcommand{\Def}[1]{Definition~\ref{#1}\xspace}

\newcommand{\wt}{\text{wt}}
\newcommand\T{\rule{0pt}{3ex}}

\DeclareMathOperator{\supp}{supp}

\DeclareMathOperator{\Inv}{Inv}
\DeclareMathOperator{\inv}{inv}
\DeclareMathOperator{\Des}{Des}
\DeclareMathOperator{\des}{des}

\DeclareMathOperator{\maj}{maj}

\DeclareMathOperator{\exc}{exc}
\DeclareMathOperator{\aexc}{aexc} % anti-excedance
 % sheila ptns with prescribed odd-length columns
 %sheila sign character
 %sheila  character chi
\DeclareMathOperator{\pr}{Pr}
\DeclareMathOperator{\cDes}{cDes} % sheila set of cyclic descents
\DeclareMathOperator{\cdes}{cdes} % sheila number of cyclic descents
\DeclareMathOperator{\baj}{baj} % sheila value of Zabrocki's baj

  % number of linked excedances % should no longer appear Sheila 2022 Nov 15
  % number of linked anti-excedances % should no longer appear Sheila 2022 Nov 15
  % set of linked excedances Sheila 2022 Nov 5 % should no longer appear Sheila 2022 Nov 15
  % set of linked anti-excedances Sheila 2022 Nov 5 % should no longer appear Sheila 2022 Nov 15
%\DeclareMathOperator{\aexc}{aexc} % anti-excedance
\DeclareMathOperator{\cv}{cval} % for number of cyclic valleys as in Cooper-Jones-Zhuang Sheila 2022 Nov 5
\DeclareMathOperator{\cpk}{cpk} % for number of cyclic peaks as in Cooper-Jones-Zhuang Sheila 2022 Nov 5
\DeclareMathOperator{\CV}{Cval} % for set of cyclic valleys as in Cooper-Jones-Zhuang Sheila 2022 Nov 5
\DeclareMathOperator{\CPK}{Cpk} % for set of cyclic peaks as in Cooper-Jones-Zhuang Sheila 2022 Nov 5
\DeclareMathOperator{\cda}{cdasc} % for number of cyclic double ascents as in Cooper-Jones-Zhuang Sheila 2022 Nov 5
\DeclareMathOperator{\CDA}{Cdasc} % for set of cyclic double ascents as in Cooper-Jones-Zhuang Sheila 2022 Nov 5
\DeclareMathOperator{\cdd}{cddes} % for number of cyclic double descents as in Cooper-Jones-Zhuang Sheila 2022 Nov 5
\DeclareMathOperator{\CDD}{Cddes} % for set of cyclic double descents as in Cooper-Jones-Zhuang  Sheila 2022 Nov 5

\title{Permutation Statistics in Conjugacy Classes\\of the Symmetric Group\footnote{This work was completed in part at the 2022 Graduate Research Workshop in Combinatorics, which was supported in part by NSF grant \#1953985 and a generous award from the Combinatorics Foundation. ML was partially supported by J. A. Grochow's NSF award CISE-2047756 and the University of Colorado Boulder, Department of Computer Science Summer Research Fellowship. MY was partially supported by the University of Denver's Professional Research Opportunities for Faculty Fund 80369-145601. We wish to thank Sara Billey for suggesting excedances and Yan Zhuang for bringing \cite{CooperJonesZhuang2020} to our attention.

We would also like to express our gratitude to Yan Zhuang for  kindly alerting us to the arXiv paper of Hamaker and Rhoades \cite{hamaker2022characters}, after seeing the first version of the present paper.  Finally we thank  Zach Hamaker for taking the time to  explain the results of the Hamaker--Rhoades paper and its overlap with the present work.
}}

\author[1]{Jesse Campion Loth}
\author[2]{Michael Levet}
\author[3]{Kevin Liu}
\author[4]{Eric Nathan Stucky}
\author[5]{Sheila~Sundaram}
\author[6]{Mei Yin} 

\affil[1]{Department of Mathematics, Simon Fraser University}
\affil[2]{Department of Computer Science, University of Colorado Boulder}
\affil[3]{Department of Mathematics, University of Washington}
\affil[4]{Department of Information Technology \& Sciences, Champlain College}
\affil[5]{Pierrepont School, Westport, CT, USA}
\affil[6]{Department of Mathematics, University of Denver}

\begin{document}
\maketitle

\begin{abstract}
We introduce the notion of a \emph{weighted inversion statistic} on the symmetric group, and examine its distribution on each conjugacy class. Our work generalizes the study of several common permutation statistics, including the number of inversions, the number of descents, the major index, and the number of excedances. As a consequence, we obtain explicit formulas for the first moments of several statistics by conjugacy class. We also show that when the cycle lengths are sufficiently large, the higher moments of arbitrary permutation statistics are independent of the conjugacy class. Fulman (\textit{J.~Comb.~Theory Ser.~A.}, 1998) previously established this result for major index and descents. We obtain these results, in part, by generalizing the techniques of Fulman (ibid.), and introducing the notion of \textit{permutation constraints}.  For permutation statistics that can be realized via \emph{symmetric} constraints, we show that each moment is a polynomial in the degree of the symmetric group.  
\end{abstract}

\noindent \textbf{Keywords.} permutation statistics, inversions, descents, excedances, weighted inversion statistic, moments, permutation constraints \\

\noindent \textbf{2020 AMS Subject Classification.} 05A05, 05E05, 60C05

\thispagestyle{empty}

\newpage

\setcounter{page}{1}

\section{Introduction}

Let $S_n$ denote the symmetric group of permutations on $[n]=\{1,2,\dots,n\}$.  A  statistic on $S_n$ is a map $X : S_{n} \to \mathbb{R}$.  The \emph{distribution} of $X$ on $S_n$ is the function $(x_{k})_{k \in \mathbb{R}}$, where $x_k$ is mapped to the number of permutations $\omega\in S_n$ such that $X(\omega)=k$, i.e., $x_k=|X^{-1}(k)|$.  Perhaps the best known statistics are the numbers of descents, the major index, and the inversion number of a permutation (see \cite{StanEC1, StanEC2}).

We study the distributions of statistics on fixed conjugacy classes of $S_n$.  These distributions are known exactly for some classical statistics: Gessel and Reutenauer \cite[Theorems 5.3, 5.5, 6.1]{GesselReutenauer} gave a generating function for the joint distribution of descents and major index by conjugacy class.  Brenti \cite{Brenti1993} gave the generating function by conjugacy class for the excedance statistic in terms of the Eulerian polynomials.  Some asymptotic results are also known: Fulman \cite{FulmanJCTA1998} showed that descents and major index exhibit an asymptotically normal distribution on conjugacy classes with sufficiently large cycles.  Kim and Lee \cite{KimLee2020} subsequently extended this result to any conjugacy class of $S_n$.

We focus on the properties of the moments of these distributions. Fulman \cite{FulmanJCTA1998} showed that for partitions $\lambda \vdash n$ with each $\lambda_{i}>2\ell$, the $\ell$th moment for descents of the conjugacy class $C_{\lambda}$ is the same as for the entire symmetric group. In particular, this implies that the moments for descents and major index on a conjugacy class $C_{\lambda}$ are dependent only on the smaller part sizes of $\lambda$.  Fulman provided two proofs of this -- one using generating functions and the other a purely combinatorial proof that leveraged the structure of descent sets.  This paper will establish similar dependence results for all permutation statistics, not just those with special descent structure.

Inspired by the  combinatorial proof of \cite[Theorem~3]{FulmanJCTA1998}, we define a framework that allows us to calculate the first moment for multiple families of permutation statistics.  It turns out that the first moment for all these statistics is only dependent on the number of parts of size one and two in $\lambda$. The higher moments of these statistics are, in general, difficult to calculate explicitly. Remarkably, this framework allows us to show that the higher moments of all permutation statistics depend only on the small part sizes of $\lambda$.

Finally, we show that for a natural class of permutation statistics (see \Thm{thm:PropHigherMomentPolynomial}) that include inversions, permutation patterns, and excedances, these moments are polynomial in $n$. Using these polynomiality results and data for small values of $n$, we can explicitly calculate some higher moments of some permutation statistics. Gatez and Pierson \cite{GaetzPierson} established the analogous result for a different generalization of permutation patterns. While our generalization and that of Gaetz and Pierson \cite{GaetzPierson} agree for permutation patterns on certain conjugacy classes, it is not clear that they both capture the same family of permutation statistics. 

\vskip.1truein

\noindent \textbf{Main results.} In this paper, we study the uniform distribution of various permutation statistics on individual conjugacy classes. Our analysis of the uniform distribution of a very large class of permutation statistics is accomplished by the introduction of two notions: \textit{weighted inversion statistics} (Section~\ref{sec:weighted}) and \textit{(symmetric) permutation constraints} (Section~\ref{sec:highermoments}) on $S_{n}$. In fact, the classically defined inversions, descents, and major index are specific instances of weighted inversion statistics. While the notion of a weighted inversion statistic is new, the notion of a permutation constraint can be traced back to \cite[Theorem~3]{FulmanJCTA1998}. The notion of a permutation constraint is quite powerful, allowing us to reason about arbitrary permutation statistics. Although  symmetric constraints do not appear to include all weighted inversion statistics, they are still quite general, capturing inversions, permutation pattern statistics, and excedances.

We first examine the expected values of weighted inversion statistics on individual conjugacy classes, obtaining the following independence result.

\begin{theorem} \label{thm:Independence}
Let $\lambda=(1^{a_1},2^{a_2},\ldots,n^{a_n})\vdash n$. The expected value of any weighted inversion statistic in the conjugacy class $C_{\lambda}$  indexed by $\lambda$ depends only on $n$, $a_1$, and $a_2$. 
\end{theorem}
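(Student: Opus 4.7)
The plan is to apply linearity of expectation and reduce the theorem to a statement about two-point joint probabilities on $C_\lambda$. A weighted inversion statistic has the form $X(\omega) = \sum_{i<j,\ \omega(i)>\omega(j)} w(i, j, \omega(i), \omega(j))$ for some weight function $w$, so by linearity,
$$
\mathbb{E}_{C_\lambda}[X] \;=\; \sum_{i<j}\sum_{a>b} w(i, j, a, b)\cdot \mathbb{P}_{C_\lambda}[\omega(i) = a,\,\omega(j) = b],
$$
and it suffices to show that each joint probability depends on $\lambda$ only through $a_1$, $a_2$, and $n$.

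For fixed distinct $i, j$, the joint probability is invariant under conjugation within $C_\lambda$ by any $\sigma \in S_n$ fixing $\{i, j\}$ pointwise, so it depends on $(a, b)$ only through the ``type'' of the partial functional graph with edges $i \to a$ and $j \to b$. I classify these into seven types: (1) $a=i$, $b=j$ (both fixed); (2) $a=j$, $b=i$ (so $(i,j)$ forms a 2-cycle); (3), (3$'$) exactly one of $i, j$ is fixed; (4), (4$'$) the two edges form a 2-path, forcing a cycle of length $\ge 3$; and (5) $a, b \notin \{i, j\}$ (generic). For types 1 through 4$'$, a direct count reduces to enumerating permutations on $[n] \setminus \{i, j\}$ with an updated cycle type, yielding closed-form probabilities such as $a_1(a_1 - 1)/[n(n-1)]$, $2a_2/[n(n-1)]$, $a_1(n - a_1)/[n(n-1)(n-2)]$, and $(n - a_1 - 2a_2)/[n(n-1)(n-2)]$. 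The last collapse uses the identity $\sum_{k \ge 3} k a_k = n - a_1 - 2a_2$.

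Type 5 is the technical heart. I split into two sub-cases based on whether $i, j$ lie in the same cycle (necessarily of length $\ge 4$, since $i, a, j, b$ are distinct) or in distinct cycles (each of length $\ge 2$), and sum the respective counts over all admissible cycle lengths. Both sub-sums a priori involve $a_k$ for every $k \ge 2$. The main obstacle will be verifying that after combining the two sub-cases, the coefficients of $a_k$ for $k \ge 3$ cancel via the identity $\sum_k k a_k = n$, leaving a rational expression in $a_1$, $a_2$, $n$ only. Once this cancellation is established, each of the seven type-probabilities depends on $\lambda$ only through $a_1, a_2, n$; assembling them against the inversion-pair weights of $X$ (and the counts of $(a, b)$ of each type with $a > b$) yields $\mathbb{E}_{C_\lambda}[X]$ in the desired form.
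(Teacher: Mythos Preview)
Your approach is correct and actually proves something slightly stronger than the paper does, but it differs from the paper's method in two respects worth noting.

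First, a definitional discrepancy: in the paper, a weighted inversion statistic is $\sum_{i<j}\wt(i,j)I_{i,j}$ with weights depending only on the \emph{positions} $(i,j)$, not on the values $(\omega(i),\omega(j))$. Your formulation $w(i,j,a,b)$ is more general; the argument still goes through, but you are proving a stronger claim than required.

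Second, and more substantively, the paper handles the generic case (your type~5, its set $\Omega_1=\{\omega:\omega(i),\omega(j)\notin\{i,j\}\}$) by a different and shorter device. Since its weights depend only on $(i,j)$, the paper never needs individual joint probabilities $\Pr_\lambda[\omega(i)=a,\omega(j)=b]$; it only needs $\Pr_\lambda[\omega(i)>\omega(j)\mid\Omega_1]$, and the involution $\tau_{ij}(\omega)=(i\,j)\,\omega\,(i\,j)$ bijects the inversion and non-inversion halves of $\Omega_1$, giving $1/2$ in one line. Your same-cycle versus different-cycle split will work, but even for your finer goal it is unnecessary: once types $1$--$4'$ are shown to have probabilities depending only on $n,a_1,a_2$, the common type-5 probability follows by complementary counting, since the $(n-2)(n-3)$ type-5 pairs all have equal probability by the conjugation symmetry you already invoked. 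The cancellation of $a_k$ for $k\ge 3$ that you flag as the main obstacle is therefore automatic; you need not carry out the cycle-length sums.
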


In the process of proving \Thm{thm:Independence}, we are able to derive explicit formulas for the expected values for several permutation statistics in individual conjugacy classes. See Table~\ref{tab: SummaryTable} for a summary of  our results, as well as a comparison to the first moments of these statistics on the entire symmetric group.

\begin{table}[h!]
\begin{center}
\begin{tabular}{|c|c|c|c|c||c|l|}
\hline
statistic & $\lambda=(1^{a_1}2^{a_2}\ldots)\vdash n$ & $\lambda_i\ge 3\ \forall i$ &$\lambda=(1^{a_1}2^{a_2})$  & $\lambda=(2^{a_2})$\T & All of $S_{n}$\\
[2pt]\hline
$\des$ & $\frac{n^2-n+2a_2-a_1^2+a_1}{2n}$   &$\frac{n-1}{2}$&  $\frac{n^2-a_1^2}{2n}$
&$\frac{n}{2}$ & $\frac{n-1}{2}$  \T \\[3pt]\hline
$\maj$ & $\frac{n^2-n+2a_2-a_1^2+a_1}{4}$    &$\frac{n(n-1)}{4}$ & $\frac{n^2-a_1^2}{4} $
&$\frac{n^2}{4}$ & $\frac{n^{2}-n}{4}$ \T \\[3pt]\hline
$\inv$ & $\frac{3n^2-n+2a_2-a_1^2+a_1 -2na_1}{12} $  &$ \frac{n(3n-1)}{12}  $ & $\frac{(3n+a_1)(n-a_1)}{12}   $ & $\frac{n^2}{4}$ & $\frac{n^{2}-n}{4}$  \T \\[3pt]\hline
%$\frac{n}{2}+\frac{a_2-\binom{a_1}{2}}{n-1} +\frac{a_1-1}{n-1} $
$\baj$ & $\frac{(n+1)(n^2-n+2a_2-a_1^2+a_1)}{12}$ &$\frac{n(n^2-1)}{12}$ 
& $\frac{(n+1)(n^2-a_1^2)}{12}$ &$\frac{ n^2(n+1)}{12}$  & $\frac{1}{4} \binom{n+1}{3}$ \\[3pt]\hline
$\baj-\inv$ & $\frac{(n-2)(n^2-n+2a_2-a_1^2+a_1)}{12}$ &$\frac{n(n-1)(n-2)}{12}$ 
& $\frac{(n-2)(n^2-a_1^2)}{12}$ & $\frac{n^2(n-2)}{12}$  & $\frac{1}{4} \binom{n}{3}$  \T \\[3pt] \hline
$\cdes$ &  $\frac{n^2-n+2a_2-a_1^2+3a_1-2}{2(n-1)}$  & $ \frac{(n+1)(n-2)}{2(n-1)}$ 
 & $\frac{n^2-a_1^2+2a_1-2}{2(n-1)}$&$\frac{n^2-2}{ 2(n-1)  }$  & $\frac{n}{2}$  \T \\[3pt]\hline
$\widetilde{\exc}$ &$\frac{n+a_1}{2}$  &$\frac{n}{2}$ & $\frac{n+a_1}{2}=a_1+a_2$   &  $\frac{n}{2}=a_2$ & $\frac{n+1}{2}$  \T \\[3pt]\hline
$\exc, \aexc$ &$\frac{n-a_1}{2}$  &$\frac{n}{2}$ & $\frac{n-a_1}{2}=a_2$   &  $\frac{n}{2}=a_2$ & $\frac{n-1}{2}$  \T \\[3pt]\hline
$\cda,\cdd$ & $\frac{n-a_1-2a_2}{6}$ & $\frac{n}{6}$ & $0$ & $0$ & $\frac{n-2}{6}$   \\[3pt]
\hline
$\cv,\cpk$ & $\frac{n-a_1+a_2}{3}$ & $\frac{n}{3}$ & $\frac{n-a_1}{2}=a_2$ & $\frac{n}{2}=a_2$ & $\frac{2n-1}{6}$   \\[3pt]
\hline
\end{tabular}
\vskip .1in
\caption{\small Expected values of various statistics in the conjugacy class $C_\lambda$  and in $S_n$. }
\label{tab: SummaryTable}
\end{center}
\end{table}
\begin{remark}\label{rem:ExpWholeGroup} The generating function, expected value, and variance of $\des$ appear in Riordan \cite[p.~216]{Riordan}, while the generating function and expected value of $\inv$ are due to Rodrigues (\cite[p. 237]{Rodrigues1839}, \cite[Notes for Chapter 1]{StanEC1}).

The Mahonian statistics maj and inv are equidistributed over $S_n$ by MacMahon \cite{MacMahon1916}, with a bijective proof via Foata's second fundamental transformation \cite{FoataPAMS1968}. 

The Eulerian statistics exc and des are equidistributed over $S_n$ \cite{MacMahon1915}, \cite[Proposition~1.4.3]{StanEC1} with a bijective proof via the first fundamental transformation \cite{Riordan, Foata1970TheorieGD, StanEC1}. 
\end{remark}

When considering conjugacy classes where all cycles have length at least $3$, we generalize the combinatorial algorithm of Fulman \cite[Theorem 3]{FulmanJCTA1998}. Precisely, we consider the notion of a \textit{permutation constraint}, which allows us to specify values of a permutation for certain elements of the domain. We then analyze the structure of the corresponding directed graph (see Section \ref{sec:highermoments}). Remarkably, the notion of permutation constraint allows us to reason about arbitrary permutation statistics.

We now turn our attention to the higher moments of arbitrary permutation statistics. For a permutation statistic $X$ and a partition $\lambda \vdash n$, denote $\E_{\lambda}[X]$ to be the expected value of $X$ taken over the conjugacy class $S_{n}$ indexed by $\lambda$.

\begin{theorem} \label{thm:HigherMoments}
Let $X$ be a permutation statistic that is realizable over a constraint set of size $m$, and let $k \geq 1$. If $\lambda \vdash n$ has all parts of size at least $mk+1$, then $\E_{\lambda}[X^k]$ is independent of $\lambda$.
\end{theorem}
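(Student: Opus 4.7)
The plan is to reduce the statement, via multilinear expansion, to a single counting lemma about extensions of partial injections. By hypothesis we may write
\[
X(\omega) \;=\; \sum_c \alpha_c \, \mathbf{1}[\omega \text{ satisfies } c],
\]
where each constraint $c$ fixes the values of $\omega$ on at most $m$ positions. Expanding the $k$th power yields
\[
X(\omega)^k \;=\; \sum_{c_1,\ldots,c_k} \alpha_{c_1}\cdots \alpha_{c_k} \, \mathbf{1}[\omega \text{ satisfies } c_1 \wedge \cdots \wedge c_k],
\]
so each joint constraint $\sigma = c_1 \wedge \cdots \wedge c_k$ is a partial injection specifying values of $\omega$ on at most $mk$ positions. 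By linearity of expectation, the theorem reduces to the following key lemma: for every partial injection $\sigma$ with $|\mathrm{dom}(\sigma)| \le mk$, the probability $\pr_\lambda[\omega \supseteq \sigma]$ does not depend on $\lambda$ as long as every part of $\lambda$ is at least $mk+1$.

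To prove this lemma, I consider the \emph{forced-edge digraph} $G_\sigma$ with vertex set $\mathrm{dom}(\sigma) \cup \mathrm{image}(\sigma)$ and edges $i \to \sigma(i)$. Because each vertex has in-degree and out-degree at most one, $G_\sigma$ decomposes into vertex-disjoint directed paths and directed cycles. Any cycle of $G_\sigma$ has length at most $|\mathrm{dom}(\sigma)| \le mk$, and any $\omega$ extending $\sigma$ would necessarily contain that cycle among its own. But every part of $\lambda$ is at least $mk+1$, so no such $\omega$ lies in $C_\lambda$; hence $\pr_\lambda[\omega \supseteq \sigma] = 0$, trivially independent of $\lambda$.

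In the remaining acyclic case, $G_\sigma$ is a disjoint union of paths $P_1,\ldots,P_s$ with total edge count $d \le mk$. I would count $|\{\omega \in C_\lambda : \omega \supseteq \sigma\}|$ directly by the following recipe: partition $\{P_1,\ldots,P_s\}$ into blocks (each block residing inside a single cycle of $\omega$), assign each block a cycle length drawn from the parts of $\lambda$, arrange the paths of the block together with the chosen ``filler'' vertices cyclically, and then distribute the leftover vertices into cycles matching the remaining multiplicities. After dividing by $|C_\lambda| = n! / \prod_i i^{a_i} a_i!$, the part-multiplicities $a_\ell$ enter only through sums of the form $\sum_\ell \ell^r a_\ell$. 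For $r = 1$ this sum is simply $n$, and iterated applications should cause the $r > 1$ contributions to cancel across the different block structures. A direct computation confirms this for small $s$: for $s = 1$ and $s = 2$ one obtains
\[
\pr_\lambda[\omega \supseteq \sigma] \;=\; \frac{1}{(n-1)(n-2)\cdots(n-d)},
\]
recovering the classical $\pr_\lambda[\omega(i) = j] = 1/(n-1)$ at $d = 1$.

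The main obstacle is verifying this cancellation in general. A naive expansion produces intermediate terms involving $\sum_\ell \ell^2 a_\ell$, $\sum_\ell \ell^3 a_\ell$, etc., which are genuinely $\lambda$-dependent; the claim is that these pieces cross-cancel once one sums over all set partitions of $\{P_1,\ldots,P_s\}$. I would address this either by direct algebraic manipulation (organizing contributions by the multiset of block sizes and telescoping with the identity $\sum_\ell \ell a_\ell = n$) or, more elegantly, by a bijective ``surgery'' argument that, given $\omega \in C_\lambda$ extending $\sigma$, splices together filler segments to produce an $\omega' \in C_{\lambda'}$ extending $\sigma$ for any other qualifying $\lambda'$. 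The bijective approach parallels but substantially generalizes the combinatorial swap in Fulman's proof for descents, since here $\sigma$ need not have any descent structure. Once the key lemma is established, the theorem follows by substituting back into the multilinear expansion of $X^k$.
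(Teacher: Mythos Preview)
Your overall architecture is exactly the paper's: expand $X^k$ multilinearly, reduce to computing $\pr_\lambda[\omega\supseteq\sigma]$ for a constraint $\sigma$ of size at most $mk$, dispose of the non-well-defined and cyclic cases as giving probability $0$, and in the acyclic case obtain the $\lambda$-free value
\[
\pr_\lambda[\omega\supseteq\sigma]=\frac{1}{(n-1)(n-2)\cdots(n-d)}.
\]
(One small omission: you should explicitly note that if $c_1\wedge\cdots\wedge c_k$ is inconsistent---two constraints disagree on a common domain or range element---then the probability is $0$ as well; you fold this into ``$\sigma$ is a partial injection'' without comment.)

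The genuine gap is your proof of the acyclic case. You set up a direct count over set partitions of the path set $\{P_1,\ldots,P_s\}$, observe that intermediate terms involve the $\lambda$-dependent power sums $\sum_\ell \ell^r a_\ell$, and then say these ``should'' cancel, verifying only $s=1,2$ and deferring the general case to unspecified ``algebraic manipulation'' or a ``bijective surgery'' you do not construct. That is precisely the hard part, and you have not done it.

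The paper sidesteps this entirely with a short induction on $d$ (Lemma~\ref{lem:size}). For $d=1$ it is the conjugation argument you already know. For the inductive step, remove one edge $(i_m,j_m)$ from $\sigma$, condition on which cycle $c_h$ of $\omega$ contains $i_m$, and observe that conditioning on $\omega(i_m)=j_m$ with $i_m\in c_h$ leaves an acyclic constraint of size $d-1$ on a conjugacy class $C_{\lambda'(h)}$ of $S_{n-1}$ whose parts are still all at least $mk$. The inductive hypothesis gives $\frac{1}{(n-2)\cdots(n-d)}$ uniformly in $h$, and summing $\pr_\lambda[i_m\in c_h\mid\omega(i_m)=j_m]=\lambda_h/n$ over $h$ contributes $1$. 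No set partitions, no power-sum cancellations. I would replace your block-partition count with this induction; it is both shorter and complete.
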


\begin{remark}
As descents are weighted permutation statistics of size $2$, our results in Table~\ref{tab: SummaryTable} and \Thm{thm:HigherMoments} imply \cite[Theorem~2]{FulmanJCTA1998} as a corollary.
\end{remark}

In Section \ref{sec:highermoments} we consider the class of permutation statistics realizable over symmetric constraint sets. Starting with a single symmetric constraint statistic on $S_{n_0}$, one can construct its \emph{symmetric extensions}  to $S_n$ with $n\geq 1$. This class of permutation statistics is quite broad -- including a number of well-studied statistics such as $\widetilde{\exc}, \exc, \aexc$ which have size $1$; $\inv, \cda, \cdd, \cv,\cpk$ which have size $2$; and $\mathrm{ile}$ which has size $\leq 3$. For a full account of these statistics, see Sections~\ref{sec:weighted}, \ref{sec:cyclic}, and \ref{sec:highermoments}, as well as \cite{blitvic2021permutations}.

\begin{theorem} \label{prop:PolyBoundGeneralizedStatistics}
Fix $k, m \geq 1$. Let $(\lambda_{n})$ be a sequence of partitions, where $\lambda_{n} \vdash n$ and all parts of $\lambda_{n}$ have size at least $mk+1$. Let $(X_{n})$ be a symmetric extension of a symmetric permutation statistic $X=X_{n_0}$ induced by a constraint set of size $m$. There exists a polynomial $p_X(n)$ depending only on $X$ such that $p_X(n) = \E_{\lambda_{n}}[X_{n}^{k}]$.
\end{theorem}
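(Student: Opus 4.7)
The plan is to combine \Thm{thm:HigherMoments} with a direct moment-method expansion. By \Thm{thm:HigherMoments}, the quantity $\E_{\lambda_n}[X_n^k]$ takes the same value for every sequence $(\lambda_n)$ satisfying the hypothesis, so it defines a function $f(n)$ of $n$ alone. It then suffices to show that $f(n)$ agrees with a polynomial for all sufficiently large $n$, and that polynomial will serve as $p_X$.

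To analyze $f$, I would write the symmetric extension in the form $X_n(\omega) = \sum_C \chi_C(\omega)$, where $C$ runs over the $S_n$-translates of the base symmetric constraints of size at most $m$ and $\chi_C$ is the indicator that $\omega$ satisfies $C$. Expanding,
\[
f(n) \;=\; \sum_{(C_1,\dots,C_k)} \Pr_{\omega \sim C_{\lambda_n}}\!\bigl[\omega \text{ satisfies every } C_i\bigr],
\]
where each $k$-tuple is supported on at most $mk$ elements of $[n]$. I would then group the tuples $(C_1,\dots,C_k)$ by the combinatorial type of their combined support under the $S_n$-action. Symmetry of the constraint set makes this action well-defined, and since each tuple touches at most $mk$ elements, the set of possible types $\mathcal{T}$ is finite and independent of $n$ once $n \geq mk$.

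For each type $\tau \in \mathcal{T}$ of support size $j_\tau \leq mk$, the number of tuples of type $\tau$ in $[n]$ has the form $c_\tau \binom{n}{j_\tau}$ for some constant $c_\tau$ depending only on $\tau$, and hence is polynomial in $n$ of degree $j_\tau$. The complementary probability
\[
\Pr_{\omega \sim C_{\lambda_n}}\!\bigl[\omega \text{ satisfies a fixed representative of type } \tau\bigr]
\]
is determined by the local cycle structure of $\omega$ on the $j_\tau$ involved elements; the hypothesis that every part of $\lambda_n$ has size at least $mk+1 > j_\tau$ guarantees that the local structure embeds into the cycles of $\lambda_n$ with room to spare, so this probability stabilizes to a value depending only on $\tau$. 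Summing the polynomial count against the stabilized probability across the finite family $\mathcal{T}$ then assembles a polynomial $p_X(n)$ of degree at most $mk$.

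I expect the main obstacle to be rigorously establishing the stability of the per-type probability -- that the probability of a fixed constraint of support size $j_\tau$ being satisfied by $\omega \sim C_{\lambda_n}$ is eventually constant in $n$ once every cycle length exceeds $j_\tau$. This is the same locality phenomenon that underlies \Thm{thm:HigherMoments}, so I would aim to recycle the permutation-constraint bookkeeping from its proof (in particular, the analysis of the associated directed graphs in Section~\ref{sec:highermoments}) rather than reprove the combinatorics from scratch. Once stability is in hand, the polynomial assembly is immediate, and one obtains as a byproduct the bound $\deg p_X \leq mk$.
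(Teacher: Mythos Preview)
Your overall architecture is right --- expand $X_n^k$, group $k$-tuples of constraints by an order-isomorphism type $\tau$ of their combined support, and count the tuples of each type as $c_\tau\binom{n}{j_\tau}$ --- and this is exactly what the paper does. The gap is in the last step: the per-type probability does \emph{not} stabilize to a constant depending only on $\tau$. By \Lem{lem:size}, if the union of the $C_i$ is a well-defined acyclic constraint of size $t$ (i.e., $t$ distinct ordered pairs), then
\[
\Pr_{\omega\sim C_{\lambda_n}}\bigl[\omega\text{ satisfies it}\bigr]=\frac{1}{(n-1)(n-2)\cdots(n-t)},
\]
which is a genuine rational function of $n$. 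The independence in \Thm{thm:HigherMoments} is independence from $\lambda$, not from $n$, so ``polynomial count times stabilized constant'' is not available.

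What the paper (Theorem~\ref{thm:PropHigherMomentPolynomial}) actually does to finish is observe that a type $\tau$ records both the size $t$ of the merged constraint and its support size $s=j_\tau$, and that acyclicity forces $s\ge t+1$. Then
\[
\binom{n}{s}\cdot\frac{1}{(n-1)\cdots(n-t)}=\frac{1}{s!}\,n(n-t-1)\cdots(n-s+1)
\]
is a polynomial in $n$ of degree $s-t$, because the factors $(n-1)\cdots(n-t)$ in the denominator are exactly cancelled by consecutive factors of the falling factorial in $\binom{n}{s}$. Summing $a_{t,s}$ times this polynomial over the finitely many $(t,s)$ gives $p_X(n)$. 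Your outline becomes correct once you replace ``the probability stabilizes to a value depending only on $\tau$'' with ``the probability equals $\prod_{i=1}^{t}(n-i)^{-1}$, and this denominator divides the count $\binom{n}{s}$ because acyclicity gives $s>t$.''
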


\begin{remark}
In the proof of \Thm{prop:PolyBoundGeneralizedStatistics} (see \Thm{thm:PropHigherMomentPolynomial}), we are able to control both the degree and leading coefficient of these polynomials.
\end{remark}

\begin{remark} \label{rmk:PermutationPatterns}
After proving \Thm{prop:PolyBoundGeneralizedStatistics}, we came across a result for permutation patterns due to Gaetz and Pierson \cite[Theorem~1.2]{GaetzPierson}, who generalized a previous result of Gaetz and Ryba \cite[Theorem~1.1(a)]{GaetzRyba}. While Gaetz and Ryba utilized partition algebras and character polynomials to obtain their result, the proof technique employed by Gaetz and Pierson was purely combinatorial. In particular, the method of  Gaetz and Pierson is quite similar to our techniques for establishing \Thm{prop:PolyBoundGeneralizedStatistics}. 

We show in Section \ref{sec:highermoments} that permutation pattern statistics (in which we track the number of occurrences of a given permutation pattern within a specified permutation) are a special case of symmetric permutation constraint statistics -- in fact, for infinitely many $m$, there exists a permutation pattern that can be realized by a symmetric constraint set of size $m$ -- but the latter is a more general class of statistics. Permutation patterns require that the constraints induce permutations on the occurrences of the pattern. For instance, an occurrence of the $213$-pattern in the permutation $\omega$ is a triple $x, y, z$ that occurs in the order $x \cdots y \cdots z$, with $y < x < z$. 

Our more general symmetric permutation constraint statistics, however, need not induce sub-permutations. For instance, we are able to specify triples $x, y, z$ such that $y < x < z$ and $y$ appears before both $x$ and $z$, without specifying the relative ordering of $x$ and $z$. With this in mind, a comparison of \Thm{prop:PolyBoundGeneralizedStatistics} and \cite[Theorem~1.2]{GaetzPierson} shows that these two results agree on permutation pattern statistics for conjugacy classes $C_{\lambda}$ where all parts have sufficiently large size.
\end{remark}

\begin{remark}
\Thm{prop:PolyBoundGeneralizedStatistics} has practical value in explicitly computing higher moments for individual conjugacy classes. Namely, if we compute $\E_{(n)}[X^k]$ for the class of $n$-cycles in $S_n$, taken over $\deg(\E_{(n)}[X^k])+1$ terms starting from $n = mk+1$, then we can use polynomial interpolation to obtain a closed form solution for $\E_{(n)}[X^k]$. Moreover, in light of \Thm{thm:HigherMoments}, this moment for full cycles is identical to $\E_{\lambda}[X^k]$, provided all parts of $\lambda$ are at least $mk+1$.
\end{remark}

\noindent \textbf{Further related work.} There has been considerable work on constructing generating functions for permutation statistics.

It is well known, for instance, that the inversion and major index statistics admit the same distribution on the entire symmetric group, with the  $q$-factorial as the generating function. Permutations with the $q$-factorial as their generating function are called \textit{Mahonian}. A general account of Mahonian statistics can be found here \cite{FoataMahonian}. It is known that Mahonian statistics are asymptotically normal with mean $\binom{n}{2}/2$ and variance $[n(n-1)(2n+5)]/72$ \cite{FoataMahonian}.

For a permutation $\omega$, let $\text{Des}(\omega)$ be the set of descents in $\omega$ (that is, the set of indices $i$ such that $\omega(i) > \omega(i+1)$). Let  $d(\omega) := |\text{Des}(\omega)| + 1$. The Eulerian polynomials as defined in \cite{StanEC1} serve as the generating functions for $d(\omega)$ (see \cite{MacMahon, Riordan}). See \cite{Foata1970TheorieGD} for a detailed treatment of the properties of Eulerian polynomials. It is known that $d(\omega)$ is asymptotically normally distributed on $S_{n}$, with mean $(n+1)/2$ and variance $(n-1)/12$ under the condition that the number of $i$-cycles vanishes asymptotically for all $i$ (an early reference is \cite[p.~216]{Riordan}; see also Fulman \cite{FulmanJCTA1998}, who in turn cites unpublished notes of Diaconis and Pitman \cite{diaconis1986permutations}). We note that descents also have connections to sorting and the theory of runs in permutations \cite[Section~5]{KnuthTAOCP3}, as well as to models of card shuffling \cite{DiaconisPersiGrath, BayerDiaconis, DiaconisGraham}.

\vskip.1truein

\noindent \textbf{Outline of paper.} We start in Section \ref{sec:preliminaries} by outlining necessary definitions and notation. In Section \ref{sec:Warmup}, we establish some results on the first moments of descents and major index that demonstrate some of the techniques that we apply in conjugacy classes of the symmetric group. In Sections \ref{sec:weighted} and \ref{sec:cyclic}, we establish results on first moments in conjugacy classes of the symmetric group, including Theorem \ref{thm:Independence} and Table \ref{tab: SummaryTable}. We then apply these results to the entire symmetric group in Section \ref{sec:S_n}. We conclude in Section \ref{sec:highermoments} by defining 
 permutation constraint statistics and establishing general results on their moments in conjugacy classes.

\section{Preliminaries}\label{sec:preliminaries}

We outline some definitions and results that will be used throughout our work. We start with three well-known statistics.

\begin{definition}  Let $\omega$ be a permutation in the symmetric group $S_n$. 
\begin{enumerate}
\item A \emph{descent} of $\omega$ is an index $i\in [n-1]$, such that $\omega(i)>\omega(i+1).$ We write
\[
\Des(\omega) = \{ i : \omega(i) > \omega(i+1) \}
\]
for the set of descents. We write $\des(\omega) := |\Des(\omega)|$ for the number of descents of $\omega$. Following \cite{FulmanJCTA1998}, we also denote $d(\omega) := \des(\omega) + 1$.

\item The \emph{major index} $\maj(\omega)$ of $\omega$  is the sum of its descents:
\[\maj(\omega):=\sum_{i \in \Des(\omega)} i.\]

\item An \emph{inversion} of $\omega$ is a pair of indices $(i,j)$ such that $1\leq i<j\leq n$ and $\omega(i)>\omega(j)$. We write 
\[
\Inv(\omega) = \{ (i, j) : i < j, \text{ but } \omega(i) > \omega(j) \}
\]
for the set of inversions. 
The \emph{inversion number} $\inv(\omega) := |\Inv(\omega)|$ is the number of inversions of $\omega$. 

\end{enumerate}
\end{definition}

\noindent \\ Denote by $C_\lambda$ the conjugacy class of the symmetric group $S_n$ indexed by the integer partition $\lambda$ of $n$.  The following fact is well known,  e.g., \cite{StanEC1} (or \cite{DummitFoote}).

\begin{proposition} \label{prop:SizeConjugacyStabilizer} The order of the centralizer of an element of cycle type $\lambda$ is $z_\lambda=\prod_i i^{a_i} a_i!$, where $\lambda$ has $a_i$ parts equal to $i,$ $i\ge 1$. For $\lambda\vdash n$, the order of the conjugacy class $C_\lambda$ is thus $\frac{n!}{z_\lambda}$.
\end{proposition}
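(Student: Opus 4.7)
The plan is to apply the orbit--stabilizer theorem to the action of $S_n$ on itself by conjugation. Under this action, the orbit of $\omega \in S_n$ is exactly the set of permutations of the same cycle type as $\omega$, i.e., the conjugacy class $C_\lambda$; the stabilizer is by definition the centralizer $C_{S_n}(\omega)$. Orbit--stabilizer then gives $|C_\lambda| \cdot |C_{S_n}(\omega)| = n!$, so the entire statement reduces to proving $|C_{S_n}(\omega)| = z_\lambda = \prod_i i^{a_i} a_i!$ for any fixed $\omega \in C_\lambda$.

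To count the centralizer, I would fix $\omega \in C_\lambda$ written in disjoint cycle notation and invoke the standard fact that conjugation acts on cycles by renaming entries: if $(x_1\, x_2\, \cdots\, x_i)$ is a cycle of $\omega$, then $(\sigma(x_1)\, \sigma(x_2)\, \cdots\, \sigma(x_i))$ is a cycle of $\sigma \omega \sigma^{-1}$. Consequently, $\sigma$ centralizes $\omega$ if and only if its action on the multiset of cycles of $\omega$ sends each cycle to a cycle of the same length, regarded as the same cyclic word.

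From here the count is purely combinatorial. For each cycle length $i$, $\sigma$ must permute the $a_i$ cycles of length $i$ among themselves, contributing $a_i!$ choices; once the bijection between source and target cycles is fixed, $\sigma$ must send each source cycle to its target via one of $i$ cyclic shifts, contributing a factor of $i$ per cycle and $i^{a_i}$ in total. Multiplying across all cycle lengths gives $z_\lambda = \prod_i i^{a_i} a_i!$.

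No step is a serious obstacle; the only care required is to verify both directions of the combinatorial bijection: every choice of (a permutation of the cycles of each length, a rotation for each cycle) produces a well-defined permutation of $[n]$ that does centralize $\omega$, and distinct choices of data yield distinct permutations. Both checks are immediate from the cycle-renaming description. Dividing $n!$ by $z_\lambda$ then yields $|C_\lambda| = n!/z_\lambda$, completing the argument.
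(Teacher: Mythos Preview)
Your proof is correct and is the standard argument. The paper itself does not supply a proof of this proposition at all; it simply states the result as well known and cites \cite{StanEC1} and \cite{DummitFoote}. What you have written is essentially the proof found in those references, so there is nothing further to compare.
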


Throughout this paper, we will use $\pr_{S_n}$ and $\pr_{\lambda}$ to denote probabilities in $S_n$ and $C_{\lambda}$ (with respect to the uniform measure). We similarly use $\E_{S_n}$ and $\E_{\lambda}$ for expected values on the corresponding probability spaces.

\section{Warm-up: first moments of descents and major index}
\label{sec:Warmup}

Fulman \cite{FulmanJCTA1998} previously determined the expected number of descents for \textit{all} conjugacy classes of $S_{n}$ without restriction to cycle types. In this section, we give an elementary, bijective proof for the expected number of descents in conjugacy classes where each cycle has length at least $3$. While our result does not fully encompass that of Fulman, our technique of conjugating by an involution provides a much simpler bijective proof. Furthermore, we will employ this technique in subsequent sections (see Section \ref{sec:InversionPairIndicators}).

\begin{definition}
    Let $\lambda \vdash n$ have all parts of size at least 2.  Define:
    \begin{align*}
       & \tau_{i,j} : C_{\lambda} \rightarrow C_{\lambda} \\
        &\tau_{i,j} (\omega) = (i \, j) \omega (i \, j).
    \end{align*}
\end{definition}

\begin{lemma}\label{lem:conjugation}
    For any fixed $i,j \in [n]$ and $\lambda$, $\tau_{i,j}$ is an involution on $C_{\lambda}$.
\end{lemma}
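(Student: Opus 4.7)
The plan is to verify the two properties that make $\tau_{i,j}$ an involution on $C_{\lambda}$: (a) that $\tau_{i,j}$ maps $C_{\lambda}$ into itself (so the codomain in the definition is legitimate), and (b) that $\tau_{i,j} \circ \tau_{i,j}$ is the identity map on $C_{\lambda}$. Both follow immediately from the fact that $(i\, j)$ is an involution in $S_{n}$ and that conjugation preserves cycle type.

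For (a), I would invoke the standard fact (Proposition~\ref{prop:SizeConjugacyStabilizer} essentially) that conjugation in $S_{n}$ preserves cycle type: if $\omega \in C_{\lambda}$, then $(i\, j)\omega(i\, j)^{-1} = (i\, j)\omega(i\, j)$ has the same cycle type as $\omega$, so it again lies in $C_{\lambda}$. Thus $\tau_{i,j}$ is a well-defined self-map of $C_{\lambda}$.

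For (b), I would simply compute, for any $\omega \in C_{\lambda}$,
\[
\tau_{i,j}\bigl(\tau_{i,j}(\omega)\bigr) = (i\, j)\bigl[(i\, j)\,\omega\,(i\, j)\bigr](i\, j) = (i\, j)^{2}\,\omega\,(i\, j)^{2} = \omega,
\]
since $(i\, j)^{2}$ is the identity. Combined with (a), this shows $\tau_{i,j}$ is an involution on $C_{\lambda}$.

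There is no real obstacle here; the lemma is purely a bookkeeping statement that sets up the conjugation-by-a-transposition maps the authors will use in the subsequent bijective arguments (e.g., in Section~\ref{sec:InversionPairIndicators}). I note that the hypothesis that all parts of $\lambda$ have size at least $2$ is not needed for the involution property itself, and is simply a standing assumption inherited from the definition (presumably to ensure later arguments have the structure they require). Accordingly, my writeup would consist of just the two short observations above.
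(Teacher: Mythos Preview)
Your proposal is correct and matches the paper's proof essentially verbatim: the paper simply notes that $C_\lambda$ is closed under conjugation (so the map is well defined) and then computes $(i\,j)(i\,j)\omega(i\,j)(i\,j)=\omega$. Your additional remark that the ``all parts at least $2$'' hypothesis is not actually needed for the involution property is also correct.
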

\begin{proof}
    Since $C_\lambda$ is closed under conjugating by permutations, the map is certainly well defined.  Also, applying it twice to any $\omega$ gives $(i \, j)(i \, j) \omega (i \, j)(i \, j) = \omega$.
\end{proof}

Fulman previously established the following.

\begin{theorem}[{\cite[Theorem~2]{FulmanJCTA1998}}] \label{thm:RecallFulman2} For a partition $\lambda$ of $n$ with $n_i$ $i$-cycles, let $C_\lambda$ be the conjugacy class corresponding to $\lambda$.  Then 
\begin{enumerate}
\item $\E_{\lambda}[\des] = \frac{n-1}{2}+\frac{ n_2-\binom{n_1}{2}}{n}$;
\item Fix $k\ge 0,$ and assume all parts of $\lambda$ have size at least $2k+1$. Then the $k$th moments of $\des(\omega)$ over $C_\lambda$ and over the full symmetric group $S_n$ are equal, i.e. 
\[\E_{\lambda}[\des^k]=\E_{S_{n}}[\des^k].\]
\end{enumerate}
\end{theorem}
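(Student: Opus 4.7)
The plan is to prove part (1) by combining linearity of expectation with the involution $\tau_{i,i+1}$ on $C_{\lambda}$, and to obtain part (2) as a direct corollary of the general machinery of permutation constraints developed in Section~\ref{sec:highermoments}. Decomposing the descent statistic as $\des(\omega) = \sum_{i=1}^{n-1} D_i(\omega)$ with $D_i(\omega) := \mathbf{1}[\omega(i) > \omega(i+1)]$, linearity gives $\E_{\lambda}[\des] = \sum_{i=1}^{n-1} \pr_{\lambda}[D_i = 1]$, so it suffices to compute $\pr_{\lambda}[D_i = 1]$ for each fixed $i$.

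For a fixed $i$, I would analyze the action of $\tau_{i,i+1}$ on $D_i$ by direct computation: setting $\sigma := \tau_{i,i+1}(\omega) = (i \, i{+}1)\,\omega\,(i \, i{+}1)$, one finds $\sigma(i) = (i \, i{+}1)(\omega(i+1))$ and $\sigma(i+1) = (i \, i{+}1)(\omega(i))$. A short case analysis on whether $\omega(i), \omega(i+1)$ lie in $\{i, i+1\}$ shows that $\tau_{i,i+1}$ flips $D_i$, i.e.\ $D_i(\sigma) = 1 - D_i(\omega)$, precisely when $\omega$ does not commute with the transposition $(i \, i{+}1)$. The permutations $\omega \in C_{\lambda}$ that do commute are exactly those with $\{\omega(i), \omega(i+1)\} = \{i, i+1\}$, splitting into two types: (a) both $i$ and $i+1$ are fixed points of $\omega$, giving $D_i(\omega) = 0$, or (b) the pair $\{i, i+1\}$ is itself a $2$-cycle of $\omega$, giving $D_i(\omega) = 1$.

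By the $S_n$-symmetry of the conjugacy class, the fraction of $\omega \in C_{\lambda}$ of type (a) is $n_1(n_1-1)/[n(n-1)]$ and of type (b) is $2 n_2 / [n(n-1)]$. Combining these fixed-point contributions with the $\tau_{i,i+1}$-pairing on the remaining permutations yields
$$\pr_{\lambda}[D_i = 1] = \frac{2 n_2}{n(n-1)} + \frac{1}{2}\left(1 - \frac{2 n_2 + n_1(n_1-1)}{n(n-1)}\right) = \frac{1}{2} + \frac{n_2 - \binom{n_1}{2}}{n(n-1)},$$
and summing over $i \in [n-1]$ recovers the formula in part (1).

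The main obstacle is executing the case analysis cleanly so that the pairing argument works uniformly across all cycle types; the subtlety is in verifying the flip on longer cycles containing the patterns $i \to i{+}1 \to k$ or $i{+}1 \to i \to k$, where the naive ``swap values at positions $i$ and $i+1$'' description must be replaced by a more careful computation separating the sub-cases $k < i$ and $k > i+1$. For part (2), the single-transposition involution is insufficient since the joint event $\{D_{i_1} = \cdots = D_{i_k} = 1\}$ couples up to $2k$ positions at once. Here I would appeal to the framework of permutation constraints in Section~\ref{sec:highermoments}: since the descent indicator $D_i$ is realized by a constraint of size $2$, \Thm{thm:HigherMoments} applied with $m = 2$ yields $\E_{\lambda}[\des^k] = \E_{S_n}[\des^k]$ whenever every part of $\lambda$ is at least $2k+1$, completing the proof.
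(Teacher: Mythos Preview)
Your argument for part~(1) is correct and uses the same core idea as the paper: the involution $\tau_{i,i+1}$ on $C_\lambda$. The paper first deploys this involution in Section~\ref{sec:Warmup} only for the restricted case where all parts are at least~$3$, and then recovers the full formula via the general weighted-inversion machinery of Section~\ref{sec:weighted} (Lemmas~\ref{partition}--\ref{Pij} and Corollary~\ref{cor:E-des-maj-inv}(1)). Your direct case analysis---observing that $\tau_{i,i+1}$ flips $D_i$ exactly off the set where $\{\omega(i),\omega(i+1)\}=\{i,i+1\}$---amounts to the paper's five-set decomposition specialized to $j=i+1$, where the $(j-i-1)$ correction term in Lemma~\ref{Pij} vanishes and only $\Omega_2^{i,i+1}$ and $\Omega_3^{i,i+1}$ require separate treatment. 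So the two approaches coincide; yours is a cleaner packaging tailored to descents, while the paper's handles arbitrary $I_{i,j}$.

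For part~(2), however, there is a genuine gap. \Thm{thm:HigherMoments} (proved as \Thm{thm:IndependenceHigherMoments}) asserts only that $\E_\lambda[\des^k]$ is \emph{independent of $\lambda$} once all parts exceed $2k$; it does not identify this common value with $\E_{S_n}[\des^k]$. These are genuinely distinct claims: by Table~\ref{tab: SummaryTable}, for the size-$2$ constraint statistic $\inv$ one has $\E_\lambda[\inv]=n(3n-1)/12\neq n(n-1)/4=\E_{S_n}[\inv]$ on classes with all parts at least~$3$. The reason is visible in comparing Lemma~\ref{lem:size} with the $S_n$ count: on $C_\lambda$ an acyclic constraint of size $s$ is satisfied with probability $1/[(n-1)\cdots(n-s)]$ and cyclic ones contribute~$0$, whereas on $S_n$ every well-defined constraint of size $s$ contributes $1/[n(n-1)\cdots(n-s+1)]$. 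That these two weighted sums happen to coincide for $\des^k$ is true---it is Fulman's theorem---but it requires an argument specific to the structure of the descent constraint set. The paper itself does not supply one: it cites Fulman for the full statement, and the remark following \Thm{thm:HigherMoments} is slightly imprecise on exactly this point. Your appeal to \Thm{thm:HigherMoments} therefore yields the independence half of part~(2), but not the identification with $\E_{S_n}$.
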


\begin{remark}
In \cite[Theorem~2]{FulmanJCTA1998}, Fulman considered $\des(\omega)$ for part (1) and $d(\omega) = \des(\omega) + 1$ for part (2). This differs with \Thm{thm:RecallFulman2}, where we consider $\des(\omega)$ in both parts (1) and (2).
\end{remark}

\noindent Lemma~\ref{lem:conjugation} gives the following simple proof of the following restricted case of \Thm{thm:RecallFulman2}(1). In fact, we will actually obtain the entirety of \Thm{thm:RecallFulman2}(1) using generalizations of this technique in Section~\ref{sec:weighted}.

\begin{observation}
Suppose that all part sizes of $\lambda$ are at least $3$. Then applying $\tau_{i, i+1}$ gives a bijection between permutations in $C_\lambda$ with a descent at position $i$, and those without.
\end{observation}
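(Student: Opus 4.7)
The plan is to leverage the involution property of $\tau_{i,i+1}$ established in \Lem{lem:conjugation}: since $\tau_{i,i+1}$ is an involution on $C_\lambda$, once I show that it toggles the descent status at position $i$ (i.e., $\omega$ has a descent at $i$ if and only if $\tau_{i,i+1}(\omega)$ does not), the map will automatically pair up permutations in $C_\lambda$ with a descent at $i$ with those without, giving the claimed bijection.

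To establish the toggling property, I would first unpack the definition and compute that $\tau_{i,i+1}(\omega)(i) = (i\, i+1)(\omega(i+1))$ and $\tau_{i,i+1}(\omega)(i+1) = (i\, i+1)(\omega(i))$. Thus $\tau_{i,i+1}$ acts on the pair of entries at positions $i$ and $i+1$ by first swapping them and then applying the transposition $(i\, i+1)$ to each value — a step that only has an effect on a value equal to $i$ or $i+1$. Next, I would invoke the cycle-type hypothesis: because every part of $\lambda$ has size at least $3$, $\omega$ has no fixed points (so $\omega(i) \neq i$ and $\omega(i+1) \neq i+1$) and no $2$-cycles (so $\omega(i) = i+1$ and $\omega(i+1) = i$ cannot both hold). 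This leaves exactly three configurations to analyze: (i) neither $\omega(i)$ nor $\omega(i+1)$ lies in $\{i, i+1\}$, (ii) $\omega(i) = i+1$ with $\omega(i+1) \notin \{i, i+1\}$, and (iii) $\omega(i+1) = i$ with $\omega(i) \notin \{i, i+1\}$.

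Finally, I would verify the toggling in each of these three cases. Case (i) is immediate: $\tau_{i,i+1}$ just swaps the two entries, so $\omega(i) > \omega(i+1)$ if and only if $\tau_{i,i+1}(\omega)(i) < \tau_{i,i+1}(\omega)(i+1)$. For cases (ii) and (iii), a short calculation using that the ``other'' value is either $\le i-1$ or $\ge i+2$ shows the descent again flips. The main obstacle is simply being disciplined about the bookkeeping: the cycle-type hypothesis is doing exactly the work of eliminating the configurations — a fixed point at position $i$ or $i+1$, or the $2$-cycle $(i\, i+1)$ — for which $\tau_{i,i+1}$ would fail to flip the descent at $i$, so once those are ruled out the remaining cases are handled by direct inspection.
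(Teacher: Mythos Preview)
Your proposal is correct and is precisely the verification the paper has in mind; the paper states this as an Observation with no accompanying proof, relying on \Lem{lem:conjugation} and the tacit understanding that the fixed-point and $2$-cycle configurations are exactly what the hypothesis $\lambda_i\ge 3$ rules out. Your case analysis (i)--(iii) is the right way to make this explicit, and the computations check out.
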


\begin{corollary} Let $\lambda \vdash n$ such that each $\lambda_{i} \geq 3$. We have that:
    \[
    \E_{\lambda}[\des] = \frac{n-1}{2}.
    \]
\end{corollary}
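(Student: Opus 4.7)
The plan is to combine the Observation immediately above with linearity of expectation. I would write
\[
\des(\omega) = \sum_{i=1}^{n-1} \mathbf{1}[\omega(i) > \omega(i+1)],
\]
and take expectations over $\omega$ drawn uniformly from $C_{\lambda}$. By linearity,
\[
\E_{\lambda}[\des] = \sum_{i=1}^{n-1} \pr_{\lambda}\bigl(\omega(i) > \omega(i+1)\bigr),
\]
so it suffices to show that each summand equals $\tfrac{1}{2}$.

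For each $i \in [n-1]$, the Observation provides a bijection, via $\tau_{i,i+1}$, between the set of $\omega \in C_\lambda$ with a descent at position $i$ and its complement in $C_\lambda$. \Lem{lem:conjugation} already guarantees that $\tau_{i,i+1}$ is a well-defined involution on $C_\lambda$, and the hypothesis that every part of $\lambda$ has size at least $3$---which rules out both fixed points at $i$ or $i+1$ and the $2$-cycle $(i\ i+1)$---is precisely what ensures that conjugation by $(i\ i+1)$ flips, rather than preserves, the descent status at $i$. Consequently both sets have cardinality $|C_\lambda|/2$, and $\pr_{\lambda}(\omega(i) > \omega(i+1)) = \tfrac{1}{2}$.

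Summing the $n-1$ equal probabilities yields $\E_{\lambda}[\des] = (n-1)/2$, which is the claim. There is essentially no obstacle here: the Observation has already encapsulated the only delicate part of the argument---the short casework on $\omega(i), \omega(i+1) \in \{i, i+1\}$ confirming that $\tau_{i,i+1}$ genuinely pairs descents at $i$ with non-descents---and the corollary is a one-line consequence via linearity of expectation.
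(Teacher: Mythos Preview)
Your proof is correct and is essentially the same as the paper's: both invoke the preceding Observation to conclude that the probability of a descent at each position $i$ is $1/2$, and then use linearity of expectation over the $n-1$ possible descent positions. You have simply written out the indicator decomposition more explicitly than the paper does.
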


\begin{proof}
    The previous proposition gives us that the probability of having a descent at any position $i$ is $1/2$.  There are $n-1$ possible positions for a descent, so the result follows.
\end{proof}

\section{Weighted inversion statistics}\label{sec:weighted}

In this section, we consider \emph{weighted inversion statistics}, which contain descents, major index, and the usual inversions as special cases. We will give an explicit formula for the mean on $C_{\lambda}$ of the indicator function of $(i,j)$ being an inversion. We then use this to derive a general formula for the expected value of any weighted inversion statistic on $C_{\lambda}$. We start with definitions.

\begin{definition}
Let $\omega \in S_{n}$, and let $1\leq i < j \leq n$. Define $I_{i,j}$ to be the indicator function for an inversion at $(i,j)$, i.e., $I_{i,j}(\omega) = 1$ if $\omega(i) > \omega(j)$ and $I_{i,j}(\omega)=0$ otherwise. 

A \emph{weighted inversion statistic} in $S_n$ is any statistic that can be expressed in the form $\sum_{1\leq i<j\leq n} \wt(i,j) I_{i,j}$, where $\wt(i,j)\in \mathbb{R}$ for all $i,j$. 
\end{definition}

\begin{remark}
Observe that descents, major index, and inversions are three examples of weighted inversion statistics. These can respectively be expressed as $\des(\omega)=\sum_{i=1}^{n-1} I_{i,i+1}(\omega)$, $\maj(\omega)=\sum_{i=1}^{n-1} i\cdot I_{i,i+1}(\omega)$, and 
$\inv(\omega)=\sum_{1\leq i<j \leq n} I_{i,j}(\omega)$. In general, if $X=\sum_{1\leq i<j\leq n} \wt(i,j) I_{i,j}$ is a weighted inversion statistic, we can use linearity to express
\begin{equation}\label{inversiondecomposition}
    \E_{\lambda}[X]=\sum_{1\leq i<j\leq n} \wt(i,j) \E_{\lambda}[I_{i,j}]=\sum_{1\leq i<j\leq n} \wt(i,j) \pr_{\lambda}[I_{i,j}=1].
\end{equation}
Hence, if we can explicitly formulate $\E_{\lambda}[I_{i,j}]=\pr_{\lambda}[I_{i,j}=1]$, then we can calculate $\E_{\lambda}[X]$. This approach also allows us to obtain similar results for other permutation statistics, such as excedances and cyclic descents.
\end{remark}

\subsection{Inversion indicator functions} \label{sec:InversionPairIndicators}
 
In this subsection, we consider the expected value of $I_{i,j}$ in $C_{\lambda}$ for any $\lambda=(1^{a_1},2^{a_2},\ldots,n^{a_n})\vdash n$. Our main result will be an explicit formula in terms of $n$, $a_1$, $a_2$, and the difference $j-i-1$. Surprisingly, the expected value of $I_{i,j}$ depends on $a_1$ and $a_2$ but is independent of $a_3,\ldots,a_n$, and depends on $i$ and $j$ through their difference $j-i$ but not the actual values of $i$ and $j$ themselves.

One of our main tools will be applying the map $\tau_{ij}$, as introduced in Section \ref{sec:Warmup}. Observe that for $\omega\in C_{\lambda}$,
\begin{equation*}\label{tauijw}
    \tau_{ij} ( \omega ) (i)=\begin{cases} \omega(j) & \text{ if $\omega(j)\notin \{i,j\}$} \\ j & \text{ if $\omega(j)=i$}  \\
i & \text{ if $\omega(j)=j$}
\end{cases} \qquad \tau_{ij} ( \omega ) (j) = \begin{cases} \omega(i) & \text{ if $\omega(i)\notin \{i,j\}$} \\
i & \text{ if $\omega(i)=j$} \\
j & \text{ if $\omega(i)=i$.}\end{cases}
\end{equation*}

\noindent Motivated by the above cases, we partition $C_{\lambda}$ into five sets based on $i$ and $j$:
\begin{equation}\label{omegasets}
    \begin{split}
        \Omega_1^{ij} & =\{\omega\in C_{\lambda}:\omega(i),\omega(j)\notin \{i,j\}\},\\
        \Omega_2^{ij} & =\{\omega \in C_{\lambda}:\omega(i)=j,\omega(j)=i\},\\
        \Omega_3^{ij} & =\{\omega\in C_{\lambda}:\omega(i)=i,\omega(j)=j\},\\
        \Omega_4^{ij} & =\{\omega\in C_{\lambda}:\omega(i)=j,\omega(j)\neq i\}\cup \{\omega\in C_{\lambda}:\omega(i)\neq j,\omega(j)= i\},\\
        \Omega_5^{ij} & =\{\omega\in C_{\lambda}:\omega(i)=i,\omega(j)\neq j\}\cup \{\omega\in C_{\lambda}:\omega(i)\neq i,\omega(j)=j\}.
    \end{split}
\end{equation}
Using the Law of Total Probability, we can decompose
\begin{equation}\label{totalprob}
    \begin{split}
        \pr_{\lambda}[I_{i,j} =1]=\sum_{k=1}^5 \pr_{\lambda}[\omega\in \Omega_k^{ij}]\cdot \pr_{\lambda}[I_{i,j}(\omega) =1 \mid \omega\in \Omega_k^{ij}].
    \end{split}
\end{equation}
We can explicitly compute the quantities in this sum.

\begin{lemma}\label{partition}
Let $\lambda=(1^{a_1},2^{a_2},\ldots,n^{a_n})\vdash n$, fix $i<j$ in $[n]$, and define $\Omega_k=\Omega_k^{ij}$ as in (\ref{omegasets}). Then
    \begin{enumerate}
        \item $\pr_{\lambda}[\omega\in \Omega_2]=\frac{2a_2}{n(n-1)},$
        \item $\pr_{\lambda}[\omega\in \Omega_3]=\frac{a_1(a_1-1)}{n(n-1)},$
        \item $\pr_{\lambda}[\omega\in \Omega_4]=\frac{2}{n-1}\cdot \left(1-\frac{a_1}{n}-\frac{2a_2}{n}\right),$ and
        \item $\pr_{\lambda}[\omega\in \Omega_5]=\frac{2a_1}{n}\cdot \left(1-\frac{a_1-1}{n-1}\right).$
    \end{enumerate}
\end{lemma}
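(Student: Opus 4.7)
\medskip
\noindent\textbf{Proof proposal.} My plan is direct enumeration using \Prop{prop:SizeConjugacyStabilizer}. Each of the four events $\Omega_2,\Omega_3,\Omega_4,\Omega_5$ only restricts the values of $\omega(i)$ and $\omega(j)$, so in each case I reduce to counting permutations of the remaining $n-2$ points of an appropriately modified cycle type, divide by $|C_\lambda|=n!/z_\lambda$, and simplify.

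For part (1), requiring $\omega(i)=j$ and $\omega(j)=i$ forces the pair $\{i,j\}$ to form a 2-cycle; the remaining $n-2$ elements then form a permutation of type $\lambda'=(1^{a_1},2^{a_2-1},3^{a_3},\ldots)$ with $z_{\lambda'}=z_\lambda/(2a_2)$. The count is $(n-2)!\cdot 2a_2/z_\lambda$, giving $\pr_\lambda[\Omega_2]=2a_2/[n(n-1)]$. Part (2) is completely analogous: requiring both $i$ and $j$ to be fixed reduces $\lambda$ to $\lambda''=(1^{a_1-2},2^{a_2},\ldots)$ with $z_{\lambda''}=z_\lambda/[a_1(a_1-1)]$, which yields the stated answer.

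For parts (3) and (4), my plan is to first compute the marginal probabilities $\pr_\lambda[\omega(i)=j]$ and $\pr_\lambda[\omega(i)=i]$, then subtract the ``both'' cases already accounted for by $\Omega_2$ and $\Omega_3$. For $i\ne j$ I classify by the length $k\ge 2$ of the cycle containing $i$ and $j$: fixing $\omega(i)=j$ leaves $\binom{n-2}{k-2}(k-2)!$ ways to fill out the rest of that cycle, times a permutation of type with one fewer $k$-cycle on the remaining $n-k$ elements. A routine reduction gives $(n-2)!\,k a_k/z_\lambda$ permutations per $k$, so summing and using $\sum_{k\ge 1}ka_k=n$ yields $\pr_\lambda[\omega(i)=j]=\tfrac{1}{n(n-1)}\sum_{k\ge 2}ka_k=\tfrac{n-a_1}{n(n-1)}$. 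Subtracting $\pr_\lambda[\Omega_2]$ and doubling (by the symmetric, disjoint companion event $\omega(j)=i,\omega(i)\ne j$) gives part (3). For part (4), the marginal $\pr_\lambda[\omega(i)=i]=a_1/n$ is immediate by the same reduction, and the same subtract-and-double argument using $\pr_\lambda[\Omega_3]$ finishes the proof.

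There is no substantive obstacle: the argument is bookkeeping. The only care needed is to verify that the two sub-events defining $\Omega_4$ are disjoint, which holds because $\omega(i)=j$ together with $\omega(j)=i$ would place $\omega\in\Omega_2$ rather than in either half of $\Omega_4$; the same observation applies to $\Omega_5$ versus $\Omega_3$. One should also note the boundary cases ($a_2=0$ or $a_1<2$) are handled automatically, since the reductions then describe the empty event with count $0$.
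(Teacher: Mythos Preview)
Your proposal is correct. Parts (1), (2), and (4) coincide with the paper's argument: the same centralizer--count reductions and the same subtract-and-double structure (the paper phrases (4) via conditional probability, but expanding $\pr_\lambda[\omega(i)=i]\cdot\pr_\lambda[\omega(j)\ne j\mid\omega(i)=i]$ is exactly your subtraction).

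The one genuine difference is part (3). The paper does not compute $\pr_\lambda[\omega(i)=j]$ by summing over the length $k$ of the cycle through $i$; instead it conditions on whether $i$ lies in a $1$-cycle, a $2$-cycle, or a longer cycle, and then argues by conjugating with the $(n-1)$-cycle $(i)(1,2,\ldots,i-1,i+1,\ldots,n)$ that, conditioned on $i$ being in a cycle of length $\ge 3$, the image $\omega(i)$ is uniformly distributed over $[n]\setminus\{i\}$. Your direct enumeration avoids the conjugation-by-a-long-cycle trick and replaces it with the identity $\sum_{k\ge 2}k a_k=n-a_1$, which is arguably more transparent bookkeeping. The paper's route, on the other hand, foreshadows the conjugation-based symmetry arguments used heavily in the next lemma (Lemma~\ref{conditionals}), so it fits the surrounding narrative better. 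Either way the computation is routine.

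One small remark: your stated reason for the disjointness of the two halves of $\Omega_4$ is slightly roundabout. They are disjoint simply because one requires $\omega(i)=j$ and the other $\omega(i)\ne j$; the equal-probability claim needed for doubling is what actually uses the $(i\ j)$-symmetry.
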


\begin{proof}
We proceed as follows.
\begin{enumerate}
\item We first note that if $a_2=0$, then $\omega$ has no $2$-cycles. As $\Omega_{2}^{ij}$ is precisely the set of permutations of $C_{\lambda}$ containing the $2$-cycle $(ij)$, we have that $\pr_{\lambda}[\omega\in \Omega_2]=0$, which agrees with the formula given. 

If instead $a_2>0$, then $(ij)$ forming a cycle implies that the remaining $n-2$ elements have cycle type $(1^{a_1},2^{a_2-1},\ldots,n^{a_n})$. Then the probability that $(ij)$ forms a $2$-cycle is given by:
\[\frac{|C_{(1^{a_1},2^{a_2-1},\ldots,n^{a_n})}|}{|C_{(1^{a_1},2^{a_2},\ldots,n^{a_n})}|}=\frac{2a_2}{n(n-1)},\]

\noindent recalling that the formulas for the centralizer sizes are given by \Prop{prop:SizeConjugacyStabilizer}.

\item By definition, $\Omega_{3}^{ij}$ contains the permutations of $C_{\lambda}$ with fixed points at positions $i$ and $j$. Thus, if $a_1\in \{0,1\}$, then $\pr_{\lambda}[\omega\in \Omega_3]=0$, which agrees with the formula given. 

If instead $a_1>1$, then the probability that $(i)$ and $(j)$ form $1$-cycles is given by
\[\frac{|C_{(1^{a_1-2},2^{a_2},\ldots,n^{a_n})}|}{|C_{(1^{a_1},2^{a_2},\ldots,n^{a_n})}|}=\frac{a_1(a_1-1)}{n(n-1)}.\]

\item We first consider $\{\omega\in C_{\lambda}:\omega(i)=j,\omega(j)\neq i\}$. Using the Law of Total Probability, we decompose
$\pr_{\lambda}[\omega(i)=j,\omega(j)\neq i]$ into the sum of the following terms:
\[\pr_{\lambda}[i \text{ is in a 1 cycle of }\omega]\cdot \pr_{\lambda}[\omega(i)=j,\omega(j)\neq i| i \text{ is in a 1 cycle of }\omega],\]
\[\pr_{\lambda}[i \text{ is in a 2 cycle of }\omega]\cdot \pr_{\lambda}[\omega(i)=j,\omega(j)\neq i| i \text{ is in a 2 cycle of }\omega],\]
\[\pr_{\lambda}[i \text{ is not in a 1 or 2 cycle of }\omega]\cdot \pr_{\lambda}[\omega(i)=j,\omega(j)\neq i|i \text{ is not in a 1 or 2 cycle of }\omega].\]

The first two terms are 0, and hence we need only compute the third term. Observe that
\[\pr_{\lambda}[i \text{ is in a 1 cycle of }\omega]=\frac{|C_{(1^{a_1-1},2^{a_2},\ldots,n^{a_n})}|}{|C_{(1^{a_1},2^{a_2},\ldots,n^{a_n})}|}=\frac{a_1}{n}.\]
Using our result from (1),
\[\pr_{\lambda}[i \text{ is in a 2 cycle of }\omega]=\sum_{k\neq i} \pr_{\lambda}[\omega(i)=k,\omega(k)=i]= \frac{2a_2}{n}.\]
Hence, $\pr_{\lambda}[i \text{ is not in a 1 or 2 cycle of }\omega]=1-\frac{a_1}{n}-\frac{2a_2}{n}$.

Finally, consider conjugation by $\rho=(i)(1,2,\ldots,i-1,i+1,\ldots,n)$ on the elements in $\Omega_4$.
Since  $\rho$  acts by replacing each element of a  cycle by its image under $\rho$, it induces bijections among the sets 
\begin{center}$\{\omega\in C_{\lambda}: \omega(i)=k, i \text{ is not in a $1$ or $2$ cycle of }\omega\}$\end{center} for $k\in [n]\setminus \{i\}$. Hence, $\{\omega\in C_{\lambda}:i \text{ is not in a 1 or 2 cycle of }\omega\}$ decomposes into $n-1$ sets of the same size based on the image of $i$. 

We conclude that \[\pr_{\lambda}[\omega(i)=j,\omega(j)\neq i|i \text{ is not in a 1 or 2 cycle of }\omega]=\frac{1}{n-1}.\]
Combined, we have that
\[\pr_{\lambda}[\omega(i)=j,\omega(j)\neq i]=\frac{1}{n-1}\cdot \left(1-\frac{a_1}{n}-\frac{2a_2}{n}\right).\] Repeating this argument over $\{\omega \in C_{\lambda}:\omega(i)\neq j,\omega(j)= i\}$ and adding the two terms implies (3).

\item We similarly first consider $\{\omega\in C_{\lambda}:\omega(i)=i,\omega(j)\neq j\}$. Then
\begin{equation*}
    \begin{split}
        \pr_{\lambda}[\omega(i)=i,\omega(j)\neq j] & = \pr_{\lambda}[\omega(i)=i]\cdot \pr_{\lambda}[\omega(j)\neq j|\omega(i)=i] \\
        & = \frac{a_1}{n}\cdot \left(1-\pr_{\lambda}[\omega(j)=j|\omega(i)=i]\right)\\
        & = \frac{a_1}{n}\cdot \left(1-\frac{a_1-1}{n-1}\right).
    \end{split}
\end{equation*}
Repeating this argument over $\{\omega\in C_{\lambda}:\omega(i)\neq i,\omega(j)= j\}$ and adding this to the expression above implies the result. \qedhere 
\end{enumerate}
\end{proof}

\begin{remark}
The preceding lemma gives an explicit formula for $\pr_{\lambda}[\omega\in \Omega_1]$ using $1-\sum_{k=2}^5 \pr[\omega\in \Omega_k]$. We will not need this explicit formulation. 
\end{remark}

\begin{lemma}\label{conditionals}
Let $\lambda=(1^{a_1},2^{a_2},\ldots,n^{a_n})\vdash n$, fix $i<j$ in $[n]$, and define $\Omega_k=\Omega_k^{ij}$ as in (\ref{omegasets}). Then
\begin{enumerate}
    \item $\pr_{\lambda}[(i,j)\in \Inv(\omega)|\omega\in \Omega_1]=\frac12$,
    
    \item $\pr_{\lambda}[(i,j)\in \Inv(\omega)|\omega\in \Omega_2]=1$,
    
    \item $\pr_{\lambda}[(i,j)\in \Inv(\omega)|\omega\in \Omega_3]=0$, 
    
    \item $\pr_{\lambda}[(i,j)\in \Inv(\omega)|\omega\in \Omega_4]=\frac{1}{2}+\frac{j-i-1}{2(n-2)}$, and
    
    \item $\pr_{\lambda}[(i,j)\in \Inv(\omega)|\omega\in \Omega_5]=\frac{1}{2}-\frac{j-i-1}{2(n-2)}$.
\end{enumerate}
\end{lemma}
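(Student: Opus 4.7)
The plan is to dispatch the five cases individually. Parts (2) and (3) are immediate: every $\omega \in \Omega_2^{ij}$ satisfies $\omega(i) = j > i = \omega(j)$ so always gives an inversion at $(i,j)$, while every $\omega \in \Omega_3^{ij}$ satisfies $\omega(i) = i < j = \omega(j)$ so never does. For part (1), I would appeal to \Lem{lem:conjugation}: if $\omega(i), \omega(j) \notin \{i,j\}$, then $\tau_{ij}(\omega)(i) = \omega(j)$ and $\tau_{ij}(\omega)(j) = \omega(i)$, both of which remain outside $\{i,j\}$, so $\tau_{ij}$ restricts to an involution on $\Omega_1^{ij}$. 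Since this involution swaps the values at positions $i$ and $j$, it pairs permutations with an inversion at $(i,j)$ with those without one, yielding probability exactly $\frac{1}{2}$.

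For part (4), the plan is to decompose $\Omega_4^{ij} = A \sqcup B$, where
\[
A = \{\omega \in C_\lambda : \omega(i) = j,\ \omega(j) \neq i\}, \qquad B = \{\omega \in C_\lambda : \omega(i) \neq j,\ \omega(j) = i\},
\]
and check directly from the case formulas for $\tau_{ij}$ that $\tau_{ij}$ restricts to a bijection $A \to B$, so $|A| = |B|$. Consequently the conditional probability of an inversion at $(i,j)$ on $\Omega_4^{ij}$ is the simple average of the conditional probabilities on $A$ and on $B$. The key input is that conditional on $\omega \in A$, the value $\omega(j)$ is uniformly distributed on $[n] \setminus \{i,j\}$ (and symmetrically $\omega(i)$ on $B$). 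I would prove this exactly as in the proof of \Lem{partition}(3): conjugating by an $(n-2)$-cycle $\rho$ supported on $[n] \setminus \{i,j\}$ (and fixing $i$ and $j$) preserves $C_\lambda$, preserves the conditions defining $A$, and cycles the value $\omega(j)$ through every element of $[n] \setminus \{i,j\}$.

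With this uniformity established, part (4) reduces to straightforward counting: on $A$ an inversion at $(i,j)$ requires $\omega(j) < j = \omega(i)$, which happens for $|[j-1] \setminus \{i\}| = j-2$ of the $n-2$ possible values, while on $B$ an inversion requires $\omega(i) > i = \omega(j)$, which happens for $|\{i+1,\ldots,n\}\setminus\{j\}| = n-i-1$ of the $n-2$ possible values. Averaging and simplifying gives the stated $\frac{1}{2} + \frac{j-i-1}{2(n-2)}$. Part (5) follows the same template with $A' = \{\omega(i)=i,\ \omega(j)\neq j\}$ and $B' = \{\omega(i)\neq i,\ \omega(j)=j\}$, yielding counts of $i-1$ and $n-j$ respectively and the final answer $\frac{1}{2} - \frac{j-i-1}{2(n-2)}$. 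I do not anticipate a serious obstacle; the only nontrivial bookkeeping is verifying that $\rho$-conjugation preserves $A$ (or $A'$) individually rather than just the larger union $\Omega_4^{ij}$ (or $\Omega_5^{ij}$), which reduces to observing that each defining condition is preserved because $\rho$ fixes both $i$ and $j$.
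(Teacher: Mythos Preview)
Your proposal is correct and follows essentially the same approach as the paper. The only cosmetic difference is in the organization of part (4) (and (5)): you first establish $|A|=|B|$ via $\tau_{ij}$ and then average the two conditional probabilities, whereas the paper combines the $(n-2)$-cycle conjugation and $\tau_{ij}$ to partition $\Omega_4$ directly into $2(n-2)$ equal-sized blocks indexed by the non-fixed value, and then counts favorable blocks; both routes use the same two ingredients and the same final count.
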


\noindent
\begin{remark}
A priori, it was not intuitively clear to us why:
\[
\pr_{\lambda}[(i,j)\in \Inv(\omega) \mid \omega\in \Omega_4] + \pr_{\lambda}[(i,j)\in \Inv(\omega) \mid \omega\in \Omega_5] = 1.
\]

\noindent Prior to proving \Lem{conditionals}, we first highlight our intuition here. If $k < i$ or $k > j$, then conjugating by $(ij)$ interchanges elements that have $(i,j)$ as an inversion to ones that do not. If $i < k < j$, then we have to track choices for $k$ and ``adjust" the probability from $1/2$. The $(j-i-1)/[2(n-2)]$ term accounts for this. Precisely, in $\Omega_{4}$, conjugating by $(ij)$ interchanges permutations that both have an inversion at $(i,j)$, and in $\Omega_{5}$, conjugating by $(ij)$ interchanges permutations that both do not have an inversion at $(i,j)$.
\end{remark}

\begin{proof}[Proof of Lemma~\ref{conditionals}]
\noindent
\begin{enumerate}
\item Note that the map $\tau_{ij}$ induces a bijection between the sets $\{\omega \in \Omega_1:\omega(i)>\omega(j)\}$ and $\{\omega\in \Omega_1:\omega(i)<\omega(j)\}$ that partition $\Omega_1$. Hence, these two sets must have the same size, and we conclude (1). 
\item This follows immediately from the definition of inversion and the images of $i$ and $j$ in the set $\Omega_2$. 

\item This follows immediately from the definition of inversion and the images of $i$ and $j$ in the set $\Omega_3$.

\item Observe that we can partition
\[ \{\omega\in C_{\lambda}:\omega(i)=j,\omega(j)\neq i\}=\bigsqcup _{k\notin\{ i,j\}} \{\omega\in C_{\lambda}: \omega(i)=j,\omega(j)=k\}.\] 
Now consider conjugation by
\[(i)(j)(1,2,\ldots,i-1,i+1,\ldots,j-1,j+1\ldots,n)\]
on $\Omega_4$. 
As in the proof of Lemma~\ref{partition}, this induces bijections among the sets $\{\omega\in C_{\lambda}: \omega(i)=j,\omega(j)=k\}$ for each $k \in [n] \setminus \{i,j\}$, and hence each of these disjoint sets has the same size. Additionally,  $\tau_{ij}$ induces a bijection between $\{\omega\in C_{\lambda}: \omega(i)=j,\omega(j)=k\}$ and $\{\omega\in C_{\lambda}: \omega(i)=k,\omega(j)=i\}$. Combining these two observations, we see that grouping elements by the images of $i$ and $j$ partitions $\Omega_4$ into $2(n-2)$ sets of the same size. Observe that the images of $i$ and $j$ are sufficient for determining if $(i,j)\in \Inv(w)$. When $\omega(i)=j$, $\omega(j)$ must be in $\{1,2,\ldots,j-1\}\setminus \{i\}$ to have an inversion at $(i,j)$. When $\omega(j)=i$, $\omega(i)$ must be in $\{i+1,\ldots,n\}\setminus \{j\}$ to have an inversion at $(i,j)$. Hence,
\begin{equation*}
    \begin{split}
        \pr_{\lambda}[(i,j)\in \Inv(\omega)|\omega\in \Omega_4] = \frac{(j-2)+(n-i-1)}{2(n-2)}= \frac{(n-2)+(j-i-1)}{2(n-2)} = \frac{1}{2}+\frac{j-i-1}{2(n-2)}.
    \end{split}
\end{equation*}

\item We can again partition $\Omega_5$ into $2(n-2)$ sets of the same size based on the image of $i$ and $j$. If $\omega(i)=i$, $\omega(j)$ must be in $\{1,2,\ldots,i-1\}$ to produce an inversion at $(i,j)$. If $\omega(j)=j$, then $\omega(i)$ must be in $\{j+1,\ldots,n\}$ to produce an inversion at $(i,j)$. Hence,
\begin{equation*}
\begin{split}
    \pr_{\lambda}[(i,j)\in \Inv(\omega)|\omega\in \Omega_5] & = \frac{(i-1)+(n-j)}{2(n-2)} 
     = \frac{(n-2)+(1+i-j)}{2(n-2)} 
    = \frac{1}{2}-\frac{j-i-1}{2(n-2)}. \qedhere
\end{split}
\end{equation*}
\end{enumerate}
\end{proof}

We have now established explicit formulas for all of the quantities in (\ref{omegasets}). Combining these, we compute the expected value of $I_{i,j}$ on $C_{\lambda}$. 

\begin{lemma}\label{Pij}
Let $\lambda=(1^{a_1},2^{a_2},\ldots,n^{a_n})\vdash n$. For any $i<j$ in $[n]$, 
\begin{equation*}
    \begin{split}
    \pr_{\lambda}[I_{i,j} =1]=\frac{1}{2}+ \frac{a_2}{n(n-1)}-\frac{a_1(a_1-1)}{2n(n-1)} + (j-i-1)\cdot \frac{n-n a_1-a_1+a_1^2-2a_2}{n(n-1)(n-2)}. 
    \end{split}
\end{equation*}
\end{lemma}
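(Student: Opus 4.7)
The plan is to substitute everything already computed into the decomposition \eqref{totalprob} and then simplify algebraically. Concretely, I would first write
\[
\pr_{\lambda}[I_{i,j}=1] \;=\; \tfrac{1}{2}P_1 \;+\; P_2 \;+\; 0\cdot P_3 \;+\; \Bigl(\tfrac{1}{2}+\tfrac{j-i-1}{2(n-2)}\Bigr) P_4 \;+\; \Bigl(\tfrac{1}{2}-\tfrac{j-i-1}{2(n-2)}\Bigr) P_5,
\]
where $P_k := \pr_{\lambda}[\omega\in\Omega_k^{ij}]$ and the conditional probabilities come from \Lem{conditionals}.

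The key trick to avoid computing $P_1$ directly (which is the only quantity that \Lem{partition} does not give explicitly) is to group the constant $1/2$ terms and use $P_1+P_2+P_3+P_4+P_5=1$. Doing so rewrites the expression as
\[
\pr_{\lambda}[I_{i,j}=1] \;=\; \tfrac{1}{2}\bigl(P_1+P_4+P_5\bigr) + P_2 + \tfrac{j-i-1}{2(n-2)}\bigl(P_4-P_5\bigr)
\;=\; \tfrac{1}{2} + \tfrac{P_2}{2} - \tfrac{P_3}{2} + \tfrac{j-i-1}{2(n-2)}\bigl(P_4-P_5\bigr).
\]
The two ``constant-in-$(j-i)$" terms $P_2/2$ and $P_3/2$ immediately produce $\tfrac{a_2}{n(n-1)}$ and $-\tfrac{a_1(a_1-1)}{2n(n-1)}$ respectively, matching the first three summands of the target formula.

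It remains to simplify the coefficient of $(j-i-1)$. Using \Lem{partition}(3) and \Lem{partition}(4), one has
\[
P_4 - P_5 \;=\; \frac{2(n - a_1 - 2a_2)}{n(n-1)} \;-\; \frac{2a_1(n-a_1)}{n(n-1)} \;=\; \frac{2\bigl(n - a_1 - 2a_2 - na_1 + a_1^2\bigr)}{n(n-1)},
\]
so dividing by $2(n-2)$ yields exactly $\dfrac{n - na_1 - a_1 + a_1^2 - 2a_2}{n(n-1)(n-2)}$, as desired.

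The main obstacle is really just bookkeeping: there is no deep step, but one has to be careful to (i) exploit $\sum_k P_k = 1$ rather than expand $P_1$ directly, since that is the unique source of simplification, and (ii) keep the signs and the $n-2$ denominators straight when combining $P_4$ and $P_5$. Once those are handled, the claimed identity drops out with no further input.
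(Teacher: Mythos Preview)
Your proposal is correct and follows essentially the same route as the paper: both substitute the values from \Lem{partition} and \Lem{conditionals} into \eqref{totalprob}, use $\sum_k P_k = 1$ to avoid computing $P_1$ explicitly, and then simplify the resulting $P_2/2$, $-P_3/2$, and $(P_4-P_5)/(2(n-2))$ terms to obtain the stated formula.
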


\begin{proof}
Define $\Omega_k=\Omega_k^{ij}$ as in (\ref{omegasets}). Starting with (\ref{totalprob}) and using Lemma \ref{conditionals}, $\pr_{\lambda}[I_{i,j} =1]$ can  be expressed as a sum of the following five terms: 
\begin{enumerate}[label=(\roman*)]
    \item $\pr_{\lambda}[\omega \in \Omega_1]\cdot \frac12$,
    \item $\pr_{\lambda}[\omega\in \Omega_2]\cdot \left(\frac12+\frac12\right)$,
    \item $\pr_{\lambda}[\omega\in \Omega_3]\cdot \left(\frac12-\frac12\right)$, 
    \item $\pr_{\lambda}[\omega\in \Omega_4]\cdot \left(\frac{1}{2}+\frac{j-i-1}{2(n-2)}\right)$, and
    \item $\pr_{\lambda}[\omega\in \Omega_5]\cdot \left(\frac{1}{2}-\frac{j-i-1}{2(n-2)}\right)$.
\end{enumerate}
We group terms with positive $1/2$ coefficients, use the fact that $C_{\lambda}$ is a disjoint union of $\{\Omega_k\}_{k=1}^5$, and apply Lemma \ref{partition} to obtain
\begin{equation*}
    \begin{split}
        & \frac{1}{2}\sum_{k=1}^5 \pr_{\lambda}[\omega\in \Omega_k]+\frac{1}{2} \pr_{\lambda}[\omega\in \Omega_2]-\frac{1}{2} \pr_{\lambda}[\omega\in \Omega_3] + \frac{j-i-1}{2(n-2)}\pr_{\lambda}[\omega\in \Omega_4] -\frac{j-i-1}{2(n-2)} \pr_{\lambda}[\omega\in \Omega_5] \\
        & = \frac{1}{2}+ \frac{a_2}{n(n-1)}-\frac{a_1(a_1-1)}{2n(n-1)} + \frac{j-i-1}{(n-1)(n-2)}\left(1-\frac{a_1}{n}-\frac{2a_2}{n}\right)-\frac{a_1(j-i-1)}{n(n-2)}\left(1-\frac{a_1-1}{n-1}\right). \\
        & = \frac{1}{2}+ \frac{a_2}{n(n-1)}-\frac{a_1(a_1-1)}{2n(n-1)} + (j-i-1)\cdot \frac{n-n a_1-a_1+a_1^2-2a_2}{n(n-1)(n-2)}.\qedhere
    \end{split}
\end{equation*}
\end{proof}

%%%Sec 4.2%%%
\subsection{First moment}

We now apply our results on $\E_{\lambda}[I_{i,j}]$ to calculate $\E_{\lambda}[X]$ for any weighted inversion statistic. We start with our main theorem on weighted inversion statistics.

\begin{theorem}\label{thm:generalinversion}
Let $\lambda=(1^{a_1},2^{a_2},\ldots,n^{a_n})\vdash n$, and let $X=\sum_{1\leq i<j\leq n} \wt(i,j) I_{i,j}$ be a weighted inversion statistic. Also set  $\alpha_n(X):=\sum_{1\leq i<j\leq n} \wt(i,j)$, 
and $\beta_n(X):=\sum_{1\leq i<j\leq n} (j-i-1) \wt(i,j)$. Then

\[
\E_{\lambda}[X]=\left(\frac{1}{2}+\frac{a_2}{n(n-1)}-\frac{a_1(a_1-1)}{2n(n-1)}\right)\cdot \alpha_n(X)+\left(\frac{n-n a_1-a_1+a_1^2-2a_2}{n(n-1)(n-2)}\right)\cdot \beta_n(X).
\]   
\end{theorem}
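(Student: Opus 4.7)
The plan is to derive this formula as a direct corollary of \Lem{Pij} combined with linearity of expectation. By definition, a weighted inversion statistic decomposes as $X = \sum_{1 \leq i < j \leq n} \wt(i,j) I_{i,j}$, so by linearity (as already noted in equation (\ref{inversiondecomposition})),
\[
\E_{\lambda}[X] = \sum_{1 \leq i < j \leq n} \wt(i,j) \cdot \pr_{\lambda}[I_{i,j} = 1].
\]

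The key observation is that \Lem{Pij} expresses $\pr_{\lambda}[I_{i,j} = 1]$ as a sum of two terms: a constant term (depending only on $n$, $a_1$, $a_2$) and a term linear in $(j-i-1)$ whose coefficient also depends only on $n$, $a_1$, $a_2$. Explicitly,
\[
\pr_{\lambda}[I_{i,j} = 1] = A(n,a_1,a_2) + B(n,a_1,a_2) \cdot (j-i-1),
\]
where $A(n,a_1,a_2) = \frac{1}{2} + \frac{a_2}{n(n-1)} - \frac{a_1(a_1-1)}{2n(n-1)}$ and $B(n,a_1,a_2) = \frac{n - na_1 - a_1 + a_1^2 - 2a_2}{n(n-1)(n-2)}$.

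I would then substitute this expression into the sum above and distribute. The $A(n,a_1,a_2)$ piece factors out of the sum and pairs with $\sum_{1 \leq i < j \leq n} \wt(i,j) = \alpha_n(X)$, while the $B(n,a_1,a_2)$ piece factors out and pairs with $\sum_{1 \leq i < j \leq n} (j-i-1)\wt(i,j) = \beta_n(X)$. This yields the claimed identity.

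There is essentially no obstacle here, since all the difficult work was already done in establishing \Lem{partition}, \Lem{conditionals}, and \Lem{Pij}. The only subtlety is that the formula for $B(n,a_1,a_2)$ implicitly requires $n \geq 3$, so one should note that the theorem is trivial for $n \leq 2$ (where there are either no inversion pairs or a single pair $(1,2)$ with $j-i-1 = 0$, making the $\beta_n(X)$ term vanish) and otherwise all denominators appearing are valid.
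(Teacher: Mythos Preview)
Your proposal is correct and follows essentially the same approach as the paper: apply linearity of expectation via equation~(\ref{inversiondecomposition}), substitute the formula from \Lem{Pij}, and factor out the constant term and the $(j-i-1)$ coefficient to produce $\alpha_n(X)$ and $\beta_n(X)$ respectively. Your added remark on the $n\le 2$ edge case is a nice touch that the paper does not make explicit.
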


\begin{proof}  Note that $\alpha_n(X)$ and $\beta_n(X)$ are independent of the partition $\lambda$. 
We start with \eqref{inversiondecomposition} and apply Lemma \ref{Pij}  to see that $\E_{\lambda}[X]$ is given by
\begin{equation*}
    \begin{split}
        &\sum_{1\leq i<j\leq n} \wt(i,j) \pr_{\lambda}[I_{i,j}(\omega)=1] \\
        & = \sum_{1\leq i<j\leq n} \wt(i,j) \left(\frac{1}{2}+ \frac{a_2}{n(n-1)}-\frac{a_1(a_1-1)}{2n(n-1)} + (j-i-1)\cdot \frac{n-n a_1-a_1+a_1^2-2a_2}{n(n-1)(n-2)} \right) \\
        & = \left(\frac{1}{2}+\frac{a_2}{n(n-1)}-\frac{a_1(a_1-1)}{2n(n-1)}\right)\cdot \sum_{1\leq i<j\leq n} \wt(i,j)+\left(\frac{n-n a_1-a_1+a_1^2-2a_2}{n(n-1)(n-2)}\right)\cdot \sum_{1\leq i<j\leq n}\wt(i,j)(j-i-1). \qedhere
    \end{split}
\end{equation*}
\end{proof}

\begin{corollary}
    Let $\lambda=(1^{a_1},2^{a_2},\ldots,n^{a_n})\vdash n$. The expected value of any weighted inversion statistic in $S_n$ is independent of $a_3,\ldots,a_n$. 
\end{corollary}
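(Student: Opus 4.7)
The plan is to apply \Thm{thm:generalinversion} directly and read off the conclusion. That theorem gives the closed form
\[
\E_{\lambda}[X]=\left(\frac{1}{2}+\frac{a_2}{n(n-1)}-\frac{a_1(a_1-1)}{2n(n-1)}\right)\alpha_n(X)+\left(\frac{n-na_1-a_1+a_1^2-2a_2}{n(n-1)(n-2)}\right)\beta_n(X),
\]
and the key observation is that each factor on the right depends on $\lambda$ in a very restricted way. Specifically, $\alpha_n(X)=\sum_{i<j}\wt(i,j)$ and $\beta_n(X)=\sum_{i<j}(j-i-1)\wt(i,j)$ are built solely out of the weight function and the index set $[n]$, so they carry no dependence on $\lambda$ whatsoever. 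The two rational coefficients in front of $\alpha_n(X)$ and $\beta_n(X)$, meanwhile, are polynomials (divided by factors of $n$) in the three parameters $n$, $a_1$, and $a_2$ alone.

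The argument is then essentially a one-line inspection: since the parameters $a_3,a_4,\ldots,a_n$ do not appear anywhere on the right-hand side, the value of $\E_{\lambda}[X]$ is unchanged under any modification of $\lambda$ that preserves $n$, $a_1$, and $a_2$. In particular, two conjugacy classes with the same number of fixed points and the same number of $2$-cycles but different multiplicities of larger cycles yield the same expectation for $X$.

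There is no genuine obstacle here; the substantive work was already done in \Lem{partition}, \Lem{conditionals}, and \Lem{Pij}, which together established the explicit formula for $\pr_{\lambda}[I_{i,j}=1]$ underlying \Thm{thm:generalinversion}. The corollary is stated separately to emphasize the structural phenomenon that, despite the potentially intricate cycle structure of $\lambda$, only the smallest part sizes (ones and twos) contribute to the first moment of any weighted inversion statistic; this foreshadows the more general independence phenomenon pursued for higher moments in \Thm{thm:HigherMoments}.
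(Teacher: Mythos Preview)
Your proposal is correct and matches the paper's approach exactly: the corollary is stated immediately after \Thm{thm:generalinversion} without a separate proof, as it follows by direct inspection of the formula, which involves only $n$, $a_1$, $a_2$, $\alpha_n(X)$, and $\beta_n(X)$. Your additional remarks about where the substantive work occurs and the foreshadowing of \Thm{thm:HigherMoments} are accurate commentary but not needed for the proof itself.
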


We can apply the preceding theorem to obtain the expected number of some common statistics. Note that part (1) of the following corollary was previously established by Fulman \cite{FulmanJCTA1998}.

\begin{corollary}\label{cor:E-des-maj-inv}
    Let $\lambda=(1^{a_1},2^{a_2},\ldots,n^{a_n})\vdash n$, $n\ge 2$. Then
    \begin{enumerate}
        \item 
        $\E_{\lambda}[\des]=\frac{1}{2n}\left(n^2-n+2a_2-a_1^2+a_1\right)$,
        
        \item $\E_{\lambda}[\maj]=\frac{1}{4}\left(n^2-n+2a_2-a_1^2+a_1\right),$
        
        \item $\E_{\lambda}[\inv]=\frac{1}{12}\left(3n^2-n+2a_2-a_1^2+a_1 -2na_1\right).$
        
    \end{enumerate}
    In particular, in the case that $a_1=a_2=0$, we have that $\E_{\lambda}[\des]=\frac{n-1}{2}$, $\E_{\lambda}[\maj]=\frac{n(n-1)}{4}$, and $\E_{\lambda}[\inv]=\frac{3n^2-n}{12}$.
\end{corollary}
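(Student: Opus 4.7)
The plan is to directly apply \Thm{thm:generalinversion}. Recall from the remark after \Def{} that $\des$, $\maj$, and $\inv$ are all weighted inversion statistics: specifically,
\[
\des(\omega) = \sum_{i=1}^{n-1} I_{i,i+1}(\omega), \qquad \maj(\omega) = \sum_{i=1}^{n-1} i \cdot I_{i,i+1}(\omega), \qquad \inv(\omega) = \sum_{1 \le i < j \le n} I_{i,j}(\omega).
\]
So for each statistic I only need to identify the weight function $\wt(i,j)$, compute the partition-independent quantities $\alpha_n = \sum_{i<j} \wt(i,j)$ and $\beta_n = \sum_{i<j}(j-i-1)\wt(i,j)$, and substitute into the formula of \Thm{thm:generalinversion}.

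For $\des$ and $\maj$, the weight is supported on consecutive pairs $j = i+1$, so $(j-i-1) = 0$ and therefore $\beta_n(\des) = \beta_n(\maj) = 0$; only the $\alpha_n$ term contributes. A direct sum gives $\alpha_n(\des) = n-1$ and $\alpha_n(\maj) = \sum_{i=1}^{n-1} i = \binom{n}{2}$. Plugging into \Thm{thm:generalinversion} and simplifying produces the two formulas after combining terms over a common denominator.

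For $\inv$, the weight is $\wt(i,j) = 1$ for all $i < j$, giving $\alpha_n(\inv) = \binom{n}{2}$. The quantity $\beta_n(\inv)$ requires a short calculation: grouping pairs by gap $d = j-i$, there are $n - d$ pairs of gap $d$, so
\[
\beta_n(\inv) = \sum_{d=1}^{n-1}(n-d)(d-1) = \sum_{d=1}^{n-1} d(n-d) - \binom{n}{2} = \frac{n(n-1)(n+1)}{6} - \frac{n(n-1)}{2} = \frac{n(n-1)(n-2)}{6},
\]
using the standard identity $\sum_{d=1}^{n-1} d(n-d) = \binom{n+1}{3}$. Substituting $\alpha_n(\inv)$ and $\beta_n(\inv)$ into \Thm{thm:generalinversion}, the factor $n(n-1)(n-2)$ cancels cleanly against the denominator of the $\beta_n$ coefficient, and combining with the $\alpha_n$ contribution under a common denominator of $12$ yields the claimed formula.

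The specializations in the final sentence ($a_1 = a_2 = 0$) follow by direct substitution. There is no real obstacle here beyond bookkeeping arithmetic: the $\beta_n(\inv)$ computation is the one spot where a non-trivial identity is needed, but it is standard. The main conceptual content has already been carried by \Thm{thm:generalinversion}, which is why this corollary is a straightforward application.
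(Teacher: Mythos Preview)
Your proposal is correct and follows essentially the same route as the paper's own proof: identify the weights, compute $\alpha_n$ and $\beta_n$, and substitute into \Thm{thm:generalinversion}. The only cosmetic difference is in the evaluation of $\beta_n(\inv)$ --- the paper substitutes $k=j-i-1$ and recognizes $\sum_{i=1}^{n-1}\binom{n-i}{2}=\binom{n}{3}$, whereas you group by gap $d=j-i$ --- but both arrive at $\binom{n}{3}$ and the remaining algebra is identical.
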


\begin{proof}
We use \Thm{thm:generalinversion} for all three statistics $X$. 
\begin{enumerate}
\item The descent statistic $\des $ is defined by $\wt(i,i+1)=1$ for $i\in \{1,2,\ldots,n-1\}$, and $\wt(i,j)=0$ otherwise. Hence $\alpha_n(X)=$ $\sum_{1\leq i<j\leq n} \wt(i,j)=(n-1)$ and  $\beta_n(X)=$ $\sum_{1\leq i<j\leq n}\wt(i,j)(j-i-1)=0$. Then
\[\E_{\lambda}[\des]=\left(\frac{1}{2}+\frac{a_2}{n(n-1)}-\frac{a_1(a_1-1)}{2n(n-1)}\right)\cdot(n-1)=\frac{1}{2n}{\left(n^2-n+2a_2-a_1^2+a_1\right)}.\]

\item The major index is defined by $\wt(i,i+1)=i$ and $\wt(i,j)=0$ otherwise. Now $\alpha_n(X)=$  $\sum_{1\leq i<j\leq n} \wt(i,j)={n\choose 2}$ and 
$\beta_n(X)=$ $\sum_{1\leq i<j\leq n}\wt(i,j)(j-i-1)=0$. Then
\[\E_{\lambda}[\maj]= \left(\frac{1}{2}+\frac{a_2}{n(n-1)}-\frac{a_1(a_1-1)}{2n(n-1)}\right)\cdot{n\choose 2}=\frac{1}{4}\left(n^2-n+2a_2-a_1^2+a_1\right).\]

\item Finally, the inversion statistic is defined by $\wt(i,j)=1$ for all $i,j$. Then $\alpha_n(X)=$
$\sum_{1\leq i<j\leq n} \wt(i,j)={n\choose 2}$, and using the substitution $k=j-i-1$, we find that 
$\beta_n(X)=$ $\sum_{1\leq i<j\leq n} \wt(i,j)(j-i-1)$ is given by
\begin{equation*}
    \begin{split}
        \sum_{1\leq i<j\leq n}(j-i-1)  =\sum_{i=1}^{n-1} \sum_{k=0}^{n-i-1} k
         =\sum_{i=1}^{n-1}\binom{n-i}{2}
        %=\sum_{i=1}^{n-1}\binom{i}{2}
        =\binom{n}{3}.
   \end{split}
\end{equation*}

Combined, we see that 
\begin{equation*}
    \begin{split}
        \E_{\lambda}[\inv] & = \left(\frac{1}{2}+\frac{a_2}{n(n-1)}-\frac{a_1(a_1-1)}{2n(n-1)}\right)\cdot {n\choose 2}+\left(\frac{n-n a_1-a_1+a_1^2-2a_2}{n(n-1)(n-2)}\right)\cdot {n\choose 3} \\
        & = \frac{1}{12}\left(3n^2-n+2a_2-a_1^2+a_1 -2na_1\right). \qedhere 
    \end{split}
\end{equation*}
\end{enumerate}
\end{proof}

\subsection{Baj}

In this subsection, we consider the curious permutation statistic $\baj$ that was introduced by Zabrocki \cite{Zabrocki2003}.

\begin{definition}[{\cite{Zabrocki2003}}]\label{defn:Zabrocki-baj}
Let $\omega \in S_{n}$. Define
\[\baj(\omega)\coloneqq \sum_{i\in \Des(\omega)} i(n-i).\]
\end{definition}

\noindent The statistic $\baj-\inv$ is the Coxeter length function restricted
to coset representatives of the extended affine Weyl group of type $A_{n-1}$ modulo translations by coroots. It has a nice generating function over the symmetric group, due to Stembridge and Waugh \cite{StembridgeWaugh1998}.  
 Furthermore, in \cite{BKS2020}, using this generating function, a formula for the $d$th cumulant is given \cite[Corollary~3.4]{BKS2020}, and  it is shown that the asymptotic distribution of  $\baj-\inv$ on $S_n$ is normal. 

Observe that $\baj$ is a weighted inversion statistic for the choice $\wt(i,i+1)=i(n-i)$ and $\wt(i,j)=0$ for $j\ne i+1$. Using Theorem~\ref{thm:generalinversion}, we obtain the following.

\begin{proposition}\label{prop:E-bajS} 
Let $\lambda=(1^{a_1},2^{a_2},\ldots,n^{a_n})\vdash n$, $n\ge 2$. Then 
\[
\E_\lambda[\baj]=\frac{1}{12}(n+1)(n^2-n+2a_2-a_1^2+a_1)=\frac{1}{3}(n+1)\E_\lambda[\maj ].
\]
\end{proposition}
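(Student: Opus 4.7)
The plan is to recognize that $\baj$ is a weighted inversion statistic and then apply \Thm{thm:generalinversion} directly. Specifically, $\baj(\omega) = \sum_i i(n-i) I_{i,i+1}(\omega)$ corresponds to the weights $\wt(i,i+1) = i(n-i)$ and $\wt(i,j) = 0$ whenever $j \neq i+1$. This is the same shape as the weight function for $\maj$ (supported only on adjacent pairs), so the argument will closely parallel part (2) of \Cor{cor:E-des-maj-inv}.

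First I would compute $\beta_n(\baj) = \sum_{1 \leq i < j \leq n} (j - i - 1)\wt(i,j)$. Since $\wt$ is supported only on pairs $(i, i+1)$ where the factor $j - i - 1$ equals zero, we immediately get $\beta_n(\baj) = 0$. Thus the entire second term of the formula in \Thm{thm:generalinversion} vanishes, which is the key simplification that makes this computation clean despite $\baj$ having nonconstant weights.

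Next I would compute $\alpha_n(\baj) = \sum_{i=1}^{n-1} i(n-i)$. Using the standard identities $\sum_{i=1}^{n-1} i = \binom{n}{2}$ and $\sum_{i=1}^{n-1} i^2 = \tfrac{(n-1)n(2n-1)}{6}$, this simplifies to $\tfrac{n(n-1)(n+1)}{6} = \binom{n+1}{3}$. Substituting $\alpha_n(\baj) = \binom{n+1}{3}$ and $\beta_n(\baj) = 0$ into \Thm{thm:generalinversion} gives
\[
\E_\lambda[\baj] = \left(\frac{1}{2} + \frac{a_2}{n(n-1)} - \frac{a_1(a_1-1)}{2n(n-1)}\right) \cdot \frac{n(n-1)(n+1)}{6},
\]
and multiplying through by $\tfrac{n(n-1)(n+1)}{6}$ and combining fractions over the common denominator $12$ yields $\tfrac{(n+1)}{12}(n^2 - n + 2a_2 - a_1^2 + a_1)$, which is the claimed identity. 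For the last equality, I would simply invoke \Cor{cor:E-des-maj-inv}(2), which states $\E_\lambda[\maj] = \tfrac{1}{4}(n^2 - n + 2a_2 - a_1^2 + a_1)$; multiplying by $\tfrac{n+1}{3}$ gives exactly $\E_\lambda[\baj]$.

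There is no real obstacle here: once one recognizes $\baj$ as a weighted inversion statistic supported on adjacent pairs, the vanishing of $\beta_n$ makes the proof essentially identical to the $\maj$ case, and the ratio $\tfrac{n+1}{3}$ between $\E_\lambda[\baj]$ and $\E_\lambda[\maj]$ comes from the ratio $\binom{n+1}{3} / \binom{n}{2} = \tfrac{n+1}{3}$ of their respective values of $\alpha_n$.
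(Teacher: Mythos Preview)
Your proposal is correct and follows essentially the same approach as the paper: the paper also observes that $\baj$ is a weighted inversion statistic with $\wt(i,i+1)=i(n-i)$ and $\wt(i,j)=0$ otherwise, and then invokes \Thm{thm:generalinversion}. Your write-up simply fills in the details (the computation of $\alpha_n(\baj)=\binom{n+1}{3}$ and $\beta_n(\baj)=0$, and the comparison with \Cor{cor:E-des-maj-inv}(2)) that the paper leaves implicit.
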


\subsection{Cyclic descents}\label{sec:CycDes}

Cyclic descents were introduced by Paola Cellini \cite{Cellini1998}.  While these are not weighted inversion statistics, a small adjustment of the methods of the previous subsections allows us to compute the first moment of cyclic descents on $C_{\lambda}$.

\begin{definition}[{\cite{Cellini1998}}] The \emph{cyclic descent set} of a permutation $\omega\in S_n$ is defined to be the set \[\cDes(\omega):= \{1\le i\le n: \omega(i)>\omega(i+1) \subset [n]\},\]
with the convention $\omega(n+1):=\omega(1)$.  Let $\cdes(\omega):=|\cDes(\omega)|.$
\end{definition}

\begin{theorem}\label{thm:cDes-1st-momentS} Let $\lambda=(1^{a_1},2^{a_2},\ldots,n^{a_n})\vdash n$, $n\ge 2$. Then 
\[\E_\lambda[\cdes]=\frac{n}{2}+\frac{a_2-\binom{a_1}{2}}{n-1} +\frac{a_1-1}{n-1},\]
and hence the expected value of cyclic descents is independent of the conjugacy class if $a_1=a_2=0$. 
\end{theorem}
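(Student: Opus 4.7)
The plan is to reduce the cyclic descent statistic to objects we have already analyzed, namely $\des$ and a single inversion indicator $I_{1,n}$. Observe first the identity
\[
\cdes(\omega) \;=\; \des(\omega) \;+\; \mathbb{1}[\omega(n) > \omega(1)],
\]
which is immediate from the definition of $\cDes$ together with the convention $\omega(n+1) := \omega(1)$: the cyclic descent set differs from the ordinary descent set only by the possible inclusion of the index $n$. Since $n \ge 2$, $\omega$ is injective, so $\omega(1) \ne \omega(n)$, and
\[
\pr_{\lambda}[\omega(n) > \omega(1)] \;=\; 1 - \pr_{\lambda}[\omega(1) > \omega(n)] \;=\; 1 - \pr_{\lambda}[I_{1,n} = 1].
\]

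By linearity of expectation, I would then write $\E_{\lambda}[\cdes] = \E_{\lambda}[\des] + 1 - \pr_{\lambda}[I_{1,n} = 1]$ and plug in the two ingredients. For the $\E_{\lambda}[\des]$ term I would quote \Cor{cor:E-des-maj-inv}(1). For $\pr_{\lambda}[I_{1,n}=1]$ I would invoke \Lem{Pij} with $i=1$, $j=n$. The key combinatorial convenience here is that the gap $j - i - 1 = n - 2$ exactly cancels the $n-2$ in the denominator of the last summand of \Lem{Pij}, which collapses that term into a clean expression over $n(n-1)$. This is the structural reason the answer admits the form in \Thm{thm:cDes-1st-momentS}.

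The remaining work is algebraic simplification: combine $\E_{\lambda}[\des]$ (with denominator $2n$) and $1 - \pr_{\lambda}[I_{1,n}=1]$ (with denominator $2n(n-1)$) over the common denominator $2n(n-1)$, cancel the factor of $n$ from the numerator, and regroup the constant and $a_1, a_2$ contributions to match
\[
\frac{n}{2} + \frac{a_2 - \binom{a_1}{2}}{n-1} + \frac{a_1 - 1}{n-1}.
\]
I would also verify this directly against the entry for $\cdes$ in Table~\ref{tab: SummaryTable} as a sanity check. The independence statement when $a_1 = a_2 = 0$ is then read off immediately: both fractional terms vanish and $\E_{\lambda}[\cdes] = n/2$.

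The only real obstacle is bookkeeping in the algebraic simplification, as signs on the $a_1^2 - a_1$ and $a_2$ contributions need to be tracked carefully when combining the $\E_{\lambda}[\des]$ numerator with the $\pr_{\lambda}[I_{1,n}=1]$ numerator. There is no conceptual difficulty: everything follows from \Lem{Pij} and \Cor{cor:E-des-maj-inv} once the cyclic-to-linear decomposition is in place.
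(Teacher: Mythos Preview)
Your approach is exactly the paper's: decompose $\cdes = \des + \mathbb{1}[\omega(n) > \omega(1)]$, compute the extra term as $1 - \pr_\lambda[I_{1,n}=1]$ via \Lem{Pij}, and combine with \Cor{cor:E-des-maj-inv}(1). The observation that $j-i-1 = n-2$ cancels the denominator factor is precisely what makes the simplification clean, and the paper exploits this too.

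There is one slip in your final sentence. When $a_1 = a_2 = 0$, the term $\dfrac{a_2 - \binom{a_1}{2}}{n-1}$ vanishes, but $\dfrac{a_1 - 1}{n-1} = -\dfrac{1}{n-1}$ does not. Thus $\E_\lambda[\cdes] = \dfrac{n}{2} - \dfrac{1}{n-1} = \dfrac{(n+1)(n-2)}{2(n-1)}$, matching the $\lambda_i \ge 3$ column of Table~\ref{tab: SummaryTable}, not $n/2$. The independence claim is still immediate, since the formula depends only on $n$ once $a_1$ and $a_2$ are fixed, but your stated value is off.
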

\begin{proof}
Writing  $J_n$ for the random variable which equals 1 if $n\in \cDes(\omega)$ and 0 otherwise,  we have 
\begin{equation}\label{eqn:EcDes1}
 \E_\lambda[\cdes]
       =\sum_{1\le i\le n-1} \pr_\lambda[I_{i,i+1}=1] + \pr_\lambda[J_n=1]\\
        =\E_\lambda[\des]+ \pr_\lambda[J_n=1].
\end{equation}
From Lemma~\ref{Pij} we have 
\begin{equation*}
\pr_\lambda[I_{1,n}=1]=\frac{1}{2} +\frac{a_2}{n(n-1)} -\frac{a_1(a_1-1)}{2n(n-1)} 
+ \frac{n-na_1-a_1+a_1^2-2a_2}{n(n-1)}
=\frac{1}{2} +\frac{\binom{a_1}{2} -a_2-n(a_1-1)}{n(n-1)}.
\end{equation*}
Now  $n$ is a cyclic descent if and only if $\omega(n)>\omega(1)$, i.e., if and only if $(1,n)$ is \emph{not} an inversion. Hence we have 
\begin{equation}\label{eqn:cDes1}
\begin{split}
\pr_\lambda[J_n=1] =1-\pr_\lambda[I_{1,n}=1]
                         = \frac{1}{2} +\frac{a_2-\binom{a_1}{2} +n(a_1-1)}{n(n-1)}.
\end{split}
\end{equation}

\noindent From Corollary~\ref{cor:E-des-maj-inv}, we have 
\begin{equation}\label{eqn:EDes}
\E_\lambda[\des]=\frac{n-1}{2} +\frac{a_2-\binom{a_1}{2}}{n}.
\end{equation}

\noindent Equation (\ref{eqn:EcDes1}) now gives the result.
\end{proof}

%%%Other Permutation Statistics%%%
\section{Cyclic permutation statistics}\label{sec:cyclic}

In this section, we apply the techniques from Section~\ref{sec:weighted} to the cases of several other permutation statistics that are not weighted inversion statistics. Such permutation statistics include cyclic descents and excedances. We call these cyclic permutation statistics, to reflect the fact that, in general, the value of the statistic can be read directly from the cycles in its cycle decomposition. 

In particular, we show that, once again, the expected values depend on at most the number of fixed points and $2$-cycles in the cycle type.

%%Excedances
\subsection{Excedances}\label{sec:Exc}

An \emph{excedance} of $\omega$ is any index $i\in [n]$ such that $\omega(i)>i$.
A \emph{weak excedance} of $\omega$ is any index $i\in [n]$ such that $\omega(i)\ge i$.  An \emph{anti-excedance} \cite{blitvic2021permutations} of $\omega$ is any index $i\in [n]$ such that $\omega(i)< i$.
Clearly $i$ is an excedance of $\omega$ if and only if $\omega(i)$ is an anti-excedance of $\omega^{-1}$, and conjugacy classes in $S_n$  are closed with respect to taking inverses, so for any fixed conjugacy class, excedance and anti-excedance are equidistributed.

Let $\exc(\omega)$ (respectively $\widetilde{\exc}(\omega), \aexc(\omega)$) denote the number of excedances (respectively weak excedances, anti-excedances) of the permutation $\omega$. While these are not weighted inversion statistics, the methods of Section \ref{sec:weighted} can be adapted to calculate their expected values in $C_{\lambda}$.

\begin{theorem}\label{thm:Elambda-exc}
Let $\lambda=(1^{a_1},2^{a_2},\ldots,n^{a_n})$. Then \[\E_{\lambda}[\exc]=\frac{1}{2}(n-a_1)=\E_\lambda[\aexc]  \text{ and }
\E_\lambda[\widetilde{\exc}]
          = \frac{1}{2}(n+a_1). \]
\end{theorem}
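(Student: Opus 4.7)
The plan is to decompose each of these statistics as a sum of indicator random variables and then exploit two symmetries: (i) every permutation in $C_\lambda$ has exactly $a_1$ fixed points, and (ii) the conjugacy class $C_\lambda$ is closed under inversion, which (as noted in the paragraph preceding the theorem) forces $\exc$ and $\aexc$ to be equidistributed on $C_\lambda$.

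More concretely, for each $i \in [n]$ let $E_i(\omega) = \mathbb{1}[\omega(i) > i]$, $A_i(\omega) = \mathbb{1}[\omega(i) < i]$, and $F_i(\omega) = \mathbb{1}[\omega(i) = i]$, so that $\exc = \sum_i E_i$, $\aexc = \sum_i A_i$, $\widetilde{\exc} = \sum_i (E_i + F_i)$, and pointwise
\[
\exc(\omega) + \aexc(\omega) + \sum_{i=1}^{n} F_i(\omega) = n.
\]
First I would observe that $\sum_i F_i(\omega)$ is the number of fixed points of $\omega$, which equals $a_1$ for every $\omega \in C_\lambda$; taking expectations in the display above then yields $\E_\lambda[\exc] + \E_\lambda[\aexc] = n - a_1$.

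Next I would invoke the inversion symmetry: since $i$ is an excedance of $\omega$ iff $\omega(i)$ is an anti-excedance of $\omega^{-1}$, and $\omega \mapsto \omega^{-1}$ is a bijection of $C_\lambda$ onto itself, one obtains $\E_\lambda[\exc] = \E_\lambda[\aexc]$. Combined with the previous identity this gives $\E_\lambda[\exc] = \E_\lambda[\aexc] = (n-a_1)/2$. Finally, since $\widetilde{\exc}(\omega) = \exc(\omega) + \mathrm{fix}(\omega)$ pointwise and $\mathrm{fix}(\omega) = a_1$ on $C_\lambda$, linearity yields $\E_\lambda[\widetilde{\exc}] = (n-a_1)/2 + a_1 = (n+a_1)/2$.

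There is essentially no hard step here; the proof is a short accounting exercise. The only ingredient that requires justification beyond linearity of expectation is the equidistribution of $\exc$ and $\aexc$ on $C_\lambda$, and this has already been stated at the beginning of the subsection, so no additional argument (and in particular no analogue of the five-case partition $\Omega_1^{ij},\ldots,\Omega_5^{ij}$ from Section~\ref{sec:weighted}) is needed.
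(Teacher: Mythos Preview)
Your argument is correct and is genuinely different from the paper's. The paper does \emph{not} use the pointwise identity $\exc+\aexc+\mathrm{fix}=n$. Instead it computes, for each fixed position $j$, the probability $\pr_\lambda[\omega(j)>j]$ directly: it splits $C_\lambda$ according to whether $\omega(j)=j$ or $\omega(j)\neq j$, and on the latter event uses conjugation by $(j)(1,2,\ldots,j-1,j+1,\ldots,n)$ to show that all $n-1$ possible values of $\omega(j)$ are equally likely, yielding $\pr_\lambda[\omega(j)>j]=(1-a_1/n)\cdot (n-j)/(n-1)$. Summing over $j$ then gives $(n-a_1)/2$. The weak-excedance result is obtained by adding the fixed-point probability $a_1/n$ position by position, and $\aexc$ is handled via the inversion symmetry at the end.

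Your route is shorter and more elementary: you never need the per-position probabilities or the conjugation bijection, only the trichotomy $\omega(i)>i$, $\omega(i)<i$, $\omega(i)=i$ and the already-stated equidistribution of $\exc$ and $\aexc$. What the paper's approach buys in exchange is the explicit value of $\pr_\lambda[\omega(j)>j]$ for each $j$, which your argument does not produce; this finer information is in the spirit of the per-pair computations of Section~\ref{sec:weighted} and is useful elsewhere (e.g., for the proofs in Section~\ref{sec:cyclic} on cyclic valleys and cyclic double ascents, where the same conjugation idea reappears).
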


\begin{proof}
Express $\exc(\omega)=\sum_{j=1}^n I_{j}(\omega)$, where $I_j$ is the indicator random variable on an excedance at position $j$.  Fixing $j$,  partition $C_{\lambda}$ into the two sets $\Omega_1=\{w\in C_{\lambda}:\omega(j)=j\}$ and $\Omega_2=\{w\in C_{\lambda}:\omega(j)\neq j\}$. Then \[\pr_{\lambda}[I_j=1]=\pr_{\lambda}[\omega\in \Omega_1]\cdot \pr_{\lambda}[I_j(\omega)=1|\omega\in \Omega_1]+\pr_{\lambda}[\omega\in \Omega_2]\cdot \pr_{\lambda}[I_j(\omega)=1|\omega\in \Omega_2].\]
Observe that $\pr_{\lambda}[\omega\in \Omega_1]=\frac{a_1}{n}$ and $\pr_{\lambda}[I_j(\omega)=1|\omega\in \Omega_1]=0$. For $\pr_{\lambda}[I_j(\omega)=1|\omega\in \Omega_2]$, we can partition
\[\Omega_2=\bigsqcup_{k\neq j} \{w\in \Omega_2:w(j)=k\}.\]

Conjugation by $(j)(1,2,\ldots,j-1,j+1,\ldots,n)$ induces bijections among these sets, and thus they all must have the same size. Observe that in $n-j$ of the $n-1$ sets, an excedance at $j$ occurs. Hence,
\[\pr_{\lambda}[I_j=1]=\pr_{\lambda}[\omega\in \Omega_2]\cdot \pr_{\lambda}[I_j(\omega)=1|\omega\in \Omega_2]=\left(1-\frac{a_1}{n}\right)\cdot \frac{n-j}{n-1}.\]

For the excedance statistic, we conclude that
\begin{equation*}
    \begin{split}
        \E_{\lambda}[\exc]  = \sum_{j=1}^n \pr_{\lambda}[I_j=1]  = \sum_{j=1}^n \left(1-\frac{a_1}{n}\right)\cdot \frac{n-j}{n-1}  = \left(\frac{n-a_1}{n}\right)\cdot \frac{1}{n-1} \cdot {n\choose 2}  = \frac{1}{2}(n-a_1).
    \end{split}
\end{equation*}

\noindent We have already noted that for every fixed conjugacy class $C$, excedance and anti-excedance are equidistributed on $C$. 
For the weak excedance statistic $\mathrm{\widetilde{exc}}(\omega),$ by definition, the only change in the above argument is that 
$\pr_{\lambda}[\widetilde{I}_j(\omega)=1|\omega\in \Omega_1]=1$ where $\widetilde{I}_j$ is the weak excedance indicator function. 
Hence 
\begin{equation} \label{eqn:exc-vs-weakexcS}
\pr_{\lambda}[\widetilde{I_j}(\omega)=1]
=\pr_\lambda[I_j(\omega)=1]+\frac{a_1}{n},
\end{equation}

and \begin{equation*}
        \E_\lambda[\widetilde{\exc}]
        = \E_{\lambda}[\exc] + a_1
          = \frac{1}{2}(n+a_1). \qedhere
\end{equation*}

\end{proof}

\begin{corollary}
    Let $\lambda=(1^{a_1},2^{a_2},\ldots,n^{a_n})$. Then the expected values of $\exc, \widetilde{\exc}$  and $\aexc$ are independent of $a_2,\ldots,a_n$. In particular, when $a_1=0$, we have that $\E_{\lambda}[\exc]=\E_\lambda[\aexc]=\E_{\lambda}[\widetilde{\exc}]=\frac{n}{2}$.
\end{corollary}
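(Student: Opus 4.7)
The plan is to derive this corollary as an immediate consequence of \Thm{thm:Elambda-exc}, which has just been established. That theorem gives explicit closed forms
\[
\E_{\lambda}[\exc] = \E_{\lambda}[\aexc] = \tfrac{1}{2}(n - a_1), \qquad \E_{\lambda}[\widetilde{\exc}] = \tfrac{1}{2}(n + a_1),
\]
and the crucial observation is that the right-hand sides involve only $n$ and $a_1$. In particular, none of $a_2, a_3, \ldots, a_n$ appears, which is exactly the independence claim. So the first step of the proof is simply to invoke \Thm{thm:Elambda-exc} and read off this structural fact.

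For the second assertion, I would substitute $a_1 = 0$ into each of the three formulas. Both $\tfrac{1}{2}(n - a_1)$ and $\tfrac{1}{2}(n + a_1)$ collapse to $\tfrac{n}{2}$ upon this substitution, yielding $\E_{\lambda}[\exc] = \E_{\lambda}[\aexc] = \E_{\lambda}[\widetilde{\exc}] = \tfrac{n}{2}$, as claimed.

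There is no genuine obstacle here: the corollary is a direct readoff from the preceding theorem, and the only ``work'' is noticing which parameters appear in the formulas. One might optionally remark, for the reader's intuition, that the equality of $\E_\lambda[\exc]$ and $\E_\lambda[\aexc]$ is also explained structurally by the fact that $i$ is an excedance of $\omega$ if and only if $\omega(i)$ is an anti-excedance of $\omega^{-1}$, combined with closure of $C_\lambda$ under inversion (as noted at the beginning of \S\ref{sec:Exc}), but this is not needed for the proof itself.
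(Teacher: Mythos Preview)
Your proposal is correct and matches the paper's approach: the corollary is stated immediately after \Thm{thm:Elambda-exc} with no separate proof, precisely because it is a direct readoff of the explicit formulas $\tfrac{1}{2}(n\pm a_1)$ established there. Your optional structural remark about closure of $C_\lambda$ under inversion is also consistent with the paper's earlier observation in \S\ref{sec:Exc}.
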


\subsection{Cyclic double ascents and cyclic valleys}
Several recent papers \cite{CooperJonesZhuang2020,blitvic2021permutations} consider statistics derived from the excedance statistic.   In \cite{CooperJonesZhuang2020}, the following statistics are defined for $\omega\in S_n$.  The element $i\in [n]$ is a 
\begin{enumerate}
    \item  \emph{cyclic valley} of $\omega$ if $\omega^{-1}(i)>i<\omega(i)$;
    \item  \emph{cyclic peak} of $\omega$ if $\omega^{-1}(i)<i>\omega(i)$;
    \item \emph{cyclic double ascent} of $\omega$ if $\omega^{-1}(i)<i<\omega(i)$; and
    \item \emph{cyclic double descent} of $\omega$ if $\omega^{-1}(i)>i>\omega(i)$.
\end{enumerate}

A cyclic double ascent (respectively, cyclic double descent) coincides with the 
\emph{linked excedance} (respectively, \emph{linked anti-excedance}) defined in \cite{blitvic2021permutations}. We follow the notation of \cite{CooperJonesZhuang2020}, and write $\cv(\omega)$ (respectively, $\cpk(\omega)$) for the number of cyclic valleys (respectively, cyclic peaks) of $\omega$. Also write $\CV(\omega)$ (respectively, $\CPK(\omega)$) for the \emph{set}  of cyclic valleys (respectively, cyclic peaks) of $\omega$. Clearly $i$ is a cyclic valley of $\omega$  if either $i$ is the smaller  letter in a 2-cycle, or  if $i$ appears in a cycle of $\omega$ of length at least 3. In the latter case the cycle containing $i$ must be of the form $(\ldots j\,i\,k\ldots)$ for $j>i<k$.  Let $\rho$ be the reversing involution defined by $\rho(i)=n+1-i$. Since the corresponding cycle of $\rho\,\omega\rho^{-1}$ is $(\ldots, n+1-j,\,n+1-i,\,n+1-k,\ldots)$, it follows that 
\[ i\in \{1,\ldots, n-1\} \text{ is a cyclic valley of } \omega \iff n+1-i\in\{2, \ldots , n\} \text{ is a cyclic peak of } \rho\,\omega\rho^{-1},\]
and hence cyclic valleys and cyclic peaks are equidistributed over a fixed conjugacy class.
The same argument shows that cyclic double descents and cyclic double ascents are equidistributed over a fixed conjugacy class. 

The number of cyclic double ascents  (respectively cyclic double descents) in a permutation $\omega$ is denoted $\cda(\omega)$ (respectively,  $\cdd(\omega)$).  Also, the \emph{set} of cyclic double ascents  (respectively cyclic double descents) in a permutation $\omega$ is denoted $\CDA(\omega)$ (respectively,  $\CDD(\omega)$).

Now observe that our methods apply to the statistics $\cda(\omega)$, $\cv(\omega)$ and $\cdd(\omega)$, $\cpk(\omega)$ as well.
Let $I_j$ be the indicator function for a cyclic double ascent at index $j$ and decompose $\cda(\omega)=\sum_{j=2}^{n-1} I_j(\omega)$.  Let $I^v_j$ be the indicator function for a cyclic valley at $j$, and write $\cv(\omega)=\sum_{j=1}^{n-1} I^v_j(\omega)$.
 Define the sets 
\begin{equation}\label{shortcyclesets}
    \begin{split}
        \Omega_1^j & = \{\omega\in C_{\lambda}: \text{ $j$ is in a $1$-cycle}\}, \\
        \Omega_2^j & = \{\omega\in C_{\lambda}: \text{ $j$ is in a $2$-cycle}\},\\
        \Omega_3^j & = \{\omega\in C_{\lambda}: \text{ $j$ is not in a $1$-cycle or $2$-cycle}\}.
    \end{split}
\end{equation}
Similar arguments as before imply the following results.  First, we have the analogue of Lemma~\ref{partition}.

\begin{lemma}
Let $\lambda=(1^{a_1},2^{a_2},\ldots,n^{a_n})\vdash n$, fix $j\in [n]$, and define $\Omega_k=\Omega_k^j$ as in (\ref{shortcyclesets}). Then
\begin{enumerate}
    \item $\pr_{\lambda}[\omega\in \Omega_1]=\frac{a_1}{n}$,
    \item $\pr_{\lambda}[\omega\in \Omega_2]=\frac{2a_2}{n}$, and
    \item $\pr_{\lambda}[\omega\in \Omega_3]=1-\frac{a_1}{n}-\frac{2a_2}{n}$.
\end{enumerate}
\end{lemma}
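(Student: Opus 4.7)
The plan is to derive each part directly from the centralizer formula of \Prop{prop:SizeConjugacyStabilizer}, together with \Lem{partition}, which already handles the relevant counting in a finer-grained form.

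For part~(1), I would observe that $\omega\in\Omega_1^j$ exactly when $j$ is a fixed point of $\omega$. Removing this fixed point yields a bijection between $\Omega_1^j$ and the set of permutations of $[n]\setminus\{j\}$ of cycle type $\lambda'=(1^{a_1-1},2^{a_2},\ldots,n^{a_n})$, whose size is $(n-1)!/z_{\lambda'}$ by \Prop{prop:SizeConjugacyStabilizer}. Dividing by $|C_\lambda|=n!/z_\lambda$ and using the straightforward ratio $z_\lambda/z_{\lambda'}=a_1$ gives $\pr_\lambda[\omega\in\Omega_1^j]=a_1/n$.

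For part~(2), I would decompose $\Omega_2^j$ according to which element $k\ne j$ shares the $2$-cycle with $j$, so $\Omega_2^j=\bigsqcup_{k\ne j}\{\omega\in C_\lambda:\omega(j)=k,\,\omega(k)=j\}$. Each summand is exactly the set $\Omega_2^{jk}$ in the notation of~(\ref{omegasets}), and \Lem{partition}(1) tells us that each has probability $\tfrac{2a_2}{n(n-1)}$. Summing over the $n-1$ choices of $k$ yields
\[
\pr_\lambda[\omega\in\Omega_2^j]=(n-1)\cdot\frac{2a_2}{n(n-1)}=\frac{2a_2}{n}.
\]
Part~(3) is then immediate from complementarity, since $C_\lambda=\Omega_1^j\sqcup\Omega_2^j\sqcup\Omega_3^j$.

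I do not anticipate any real obstacle; the argument is essentially bookkeeping with the centralizer formula, which is why the statement appears without proof in the paper as a close analogue of \Lem{partition}. The only care required concerns edge cases $a_1=0$ or $a_2=0$, where the conjugacy class on $n-1$ or $n-2$ letters obtained after removing the hypothesized short cycle at $j$ is empty; but in each such case both sides of the stated identity vanish and the formula continues to hold.
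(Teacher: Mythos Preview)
Your proposal is correct and mirrors the paper's approach exactly: the paper's proof simply says ``follows the same arguments as Lemma~\ref{partition},'' and indeed the precise computations you outline (the centralizer ratio for fixed points and the sum $\sum_{k\ne j}\pr_\lambda[\omega(j)=k,\omega(k)=j]=2a_2/n$) already appear verbatim inside the proof of Lemma~\ref{partition}(3). Your handling of the degenerate cases $a_1=0$ and $a_2=0$ is also in line with how the paper treats them there.
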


\begin{proof}
    The proof follows the same arguments as Lemma~\ref{partition}.
\end{proof}

\begin{theorem}\label{thm:Elambda-cdasc-cval}
Let $\lambda=(1^{a_1},2^{a_2},\ldots,n^{a_n})\vdash n$. Then 
\begin{enumerate}
\item
$\E_{\lambda}[\cda]=\frac{n-a_1-2a_2}{6}=\E_{\lambda}[\cdd]$ and 
\item $\E_{\lambda}[\cv]=\frac{n-a_1+a_2}{3}=\E_{\lambda}[\cpk].$
\end{enumerate}
\end{theorem}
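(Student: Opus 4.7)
The plan is to mimic the strategy of \Thm{thm:Elambda-exc}, now conditioning on the length of the cycle containing $j$ via the partition $C_\lambda = \Omega_1^j \sqcup \Omega_2^j \sqcup \Omega_3^j$ from \eqref{shortcyclesets}. First I would write $\cda(\omega) = \sum_j I_j(\omega)$ and $\cv(\omega) = \sum_j I_j^v(\omega)$, where $I_j$ and $I_j^v$ are the indicators that $j$ is a cyclic double ascent or a cyclic valley, respectively. The contributions from $\Omega_1^j$ and $\Omega_2^j$ are immediate from the definitions: a fixed point gives $\omega^{-1}(j) = j = \omega(j)$ and a $2$-cycle gives $\omega^{-1}(j) = \omega(j)$, so both events force $I_j = 0$; meanwhile, $I_j^v$ vanishes on $\Omega_1^j$ and equals $1$ on $\Omega_2^j$ precisely when the partner of $j$ exceeds $j$.

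The heart of the argument is computing the conditional probabilities $\pr_\lambda[I_j = 1 \mid \Omega_3^j]$, $\pr_\lambda[I_j^v = 1 \mid \Omega_3^j]$, and $\pr_\lambda[I_j^v = 1 \mid \Omega_2^j]$. My plan is to establish the following uniformity lemma, in the spirit of Lemma~\ref{partition}(3): conditioned on $\omega \in \Omega_3^j$, the ordered pair $(\omega^{-1}(j), \omega(j))$ is uniform over the $(n-1)(n-2)$ ordered pairs of distinct elements of $[n]\setminus\{j\}$. The proof is the same conjugation trick used in Section~\ref{sec:weighted}: for any $\sigma$ in the stabilizer of $j$ in $S_n$, the map $\omega \mapsto \sigma\omega\sigma^{-1}$ is a cycle-type-preserving bijection on $\Omega_3^j$, and under it the pair $(\omega^{-1}(j),\omega(j))$ transforms diagonally to $(\sigma(\omega^{-1}(j)),\sigma(\omega(j)))$; since $S_{n-1}$ acts $2$-transitively on $[n]\setminus\{j\}$, the induced distribution on admissible pairs is uniform. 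An analogous (simpler) averaging argument on $\Omega_2^j$ shows that the partner of $j$ is uniform on $[n]\setminus\{j\}$.

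Once the uniformity lemma is in hand, counting yields
\[
\pr_\lambda[I_j = 1 \mid \Omega_3^j] = \frac{(j-1)(n-j)}{(n-1)(n-2)}, \qquad \pr_\lambda[I_j^v = 1 \mid \Omega_3^j] = \frac{(n-j)(n-j-1)}{(n-1)(n-2)},
\]
together with $\pr_\lambda[I_j^v = 1 \mid \Omega_2^j] = (n-j)/(n-1)$. Combining these with the unconditional probabilities from the preceding lemma and summing over $j$ using the identities $\sum_{j=1}^{n}(j-1)(n-j) = \binom{n}{3}$ and $\sum_{j=1}^{n-1}(n-j)(n-j-1) = 2\binom{n}{3}$ should produce
\[
\E_\lambda[\cda] = \frac{n-a_1-2a_2}{n(n-1)(n-2)}\binom{n}{3} = \frac{n-a_1-2a_2}{6}, \qquad \E_\lambda[\cv] = a_2 + \frac{n-a_1-2a_2}{3} = \frac{n-a_1+a_2}{3},
\]
after the $a_2$ pieces telescope in the $\cv$ computation. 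The remaining equalities $\E_\lambda[\cda]=\E_\lambda[\cdd]$ and $\E_\lambda[\cv]=\E_\lambda[\cpk]$ I would simply import from the equidistribution via the reversing involution $\rho(i) = n+1-i$ already established in the discussion preceding the theorem.

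The main obstacle will be articulating the uniformity lemma cleanly: one must verify that the stabilizer of $j$ in $S_n$ preserves the partition class $\Omega_3^j$ (immediate, since the action preserves both cycle type and the cycle length of $j$), and that its diagonal action is transitive on ordered pairs of distinct elements of $[n]\setminus\{j\}$. Everything else is bookkeeping with the two polynomial identities above, directly in the style of Section~\ref{sec:weighted}.
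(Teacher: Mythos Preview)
Your proposal is correct and follows essentially the same approach as the paper: partition $C_\lambda$ by the cycle length of $j$, use conjugation by the stabilizer of $j$ to obtain uniformity of $(\omega^{-1}(j),\omega(j))$ on $\Omega_3^j$ and of the partner on $\Omega_2^j$, count admissible pairs, and sum via the two binomial identities. Your explicit framing of the uniformity step as $2$-transitivity of $S_{n-1}$ is slightly cleaner than the paper's ``conjugation by appropriate elements,'' but the argument is otherwise identical, including the final appeal to the reversing involution for the equidistributions.
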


\begin{proof}
Fix $j$ and observe that if $\omega\in \Omega_1^j\cup \Omega_2^j$, then $j$ is not a cyclic double ascent of $\omega$. Also, 
 $j$ is a cyclic valley of $\omega$ only if  $\omega\in \Omega_2^j\cup \Omega_3^j $.
Hence, by the Law of Total Probability, 
we have 
\begin{align*}\E_{\lambda}[I_j]&=\sum_{k=1}^3 \pr_{\lambda}[\omega\in \Omega_k^j]\pr_{\lambda}[I_j(\omega)=1|\omega\in \Omega_k^j]=\pr_{\lambda}[\omega\in \Omega_3^j]\pr_{\lambda}[I_j(\omega)=1|\omega\in \Omega_3^j],\\
\E_{\lambda}[I^v_j]&=\sum_{k=1}^3 \pr_{\lambda}[\omega\in \Omega_k^j]\pr_{\lambda}[I^v_j(\omega)=1|\omega\in \Omega_k^j]\\&=\pr_{\lambda}[\omega\in \Omega_3^j]\pr_{\lambda}[I^v_j(\omega)=1|\omega\in \Omega_3^j]
+\pr_\lambda[\omega\in \Omega_2^j]
\pr_{\lambda}[I^v_j(\omega)=1|\omega\in \Omega_2^j].
\end{align*}
If we fix distinct $i,k\in [n] \setminus \{j\}$, then conjugation by appropriate elements implies $\Pr_{\lambda}[\omega(i)=j|\omega\in \Omega_2^j]=\Pr_{\lambda}[\omega(i)=j|\omega\in \Omega_3^j]=\frac{1}{n-1}$ and $\Pr_{\lambda}[\omega(i)=j\wedge \omega(j)=k|\omega\in \Omega_3^j]=\frac{1}{(n-1)(n-2)}$.

Now let $i,j,k$ be elements appearing in succession in a cycle of length at least 3. A cyclic double ascent at $j$ occurs if and only if $i<j<k$, and hence there are a total of $(j-1)(n-j)$ choices $\{i,k\}$ that result in a cyclic double ascent at $j\ne 1, n$. 
A cyclic valley occurs if  $i>j<k$, and thus there are a total of 
$(n-j)(n-j-1)$ choices for $\{i,k\}$ that result in a cyclic valley at $j\ne n$.  However, a cyclic valley also occurs at $j$ when  $(i,j)$ is a 2-cycle with $i>j$. There are $(n-j)$ choices for $i$ in this case.

Combined with the preceding lemma, we see that 
\begin{align*}
    \E_{\lambda}[I_j] & =\left(1-\frac{a_1}{n}-\frac{2a_2}{n}\right)\cdot \frac{(j-1)(n-j)}{(n-1)(n-2)},\\
    \E_{\lambda}[I^v_j] & =\left(1-\frac{a_1}{n}-\frac{2a_2}{n}\right)\cdot \frac{(n-j-1)(n-j)}{(n-1)(n-2)} + \frac{2a_2}{n}\cdot  \frac{n-j}{n-1}.
\end{align*}
Summing over all $j$ gives
\begin{align*}
    \E_{\lambda}[\cda]&=\left(1-\frac{a_1}{n}-\frac{2a_2}{n}\right)\cdot \frac{1}{(n-1)(n-2)}\cdot \sum_{j=2}^{n-1}(j-1)(n-j)=\left(1-\frac{a_1}{n}-\frac{2a_2}{n}\right)\cdot \frac{n}{6}, \\
    \E_{\lambda}[\cv]&=\left(1-\frac{a_1}{n}-\frac{2a_2}{n}\right)\cdot \frac{1}{(n-1)(n-2)}\cdot \sum_{j=1}^{n-1}(n-j-1)(n-j)+\frac{2a_2}{n(n-1)}\cdot  \sum_{j=1}^{n-1} (n-j)\\
&=\left(1-\frac{a_1}{n}-\frac{2a_2}{n}\right)\cdot \frac{n}{3} +a_2,
\end{align*}
using the facts that 
$\sum_{j=2}^{n-1} (j-1)(n-j)=\binom{n}{3}$ and $\sum_{j=1}^{n-1}(n-j-1)(n-j)=2\binom{n}{3}.$
This finishes the proof.
\end{proof}
These results confirm the fact that $\exc(\omega)=\cv(\omega)+\cda(\omega)$.%

\section{First moments on $S_{n}$ from conjugacy class}\label{sec:S_n}

In this section, we consider connections between the first moments on conjugacy classes with those on all of $S_n$. Observe that the expected values of a statistic $X$ on individual conjugacy classes is related to  the expected value on the entire symmetric group by the  formula
\begin{equation}\label{eqn:lambda-to-Sn}\E_{S_n}[X]=\sum_{\lambda\vdash n} z_\lambda^{-1} \E_\lambda[X], \end{equation}
since the order of the conjugacy class indexed by $\lambda$ is $n!/z_\lambda$.

In this section we  analyse Equation~\eqref{eqn:lambda-to-Sn} more carefully.  The following identities will be  useful.

\begin{lemma}\label{lem:zlambda-formulaS}
Let $\lambda=(1^{a_1},2^{a_2},\ldots,n^{a_n})\vdash n$. The following identities hold:
\begin{enumerate}
    \item $\sum_{\lambda\vdash n} z_\lambda^{-1}=1$,
    \item $\sum_{\lambda\vdash n} z_\lambda^{-1}a_1=1$,
    \item $\sum_{\lambda\vdash n} z_\lambda^{-1}a_1^2=2$, and
    \item $\sum_{\lambda\vdash n} z_\lambda^{-1}a_2=1/2.$
\end{enumerate}

\begin{proof} 
\noindent 
\begin{enumerate}
\item This is the class equation for $S_n$ \cite{DummitFoote}, a consequence of the fact that $n!=\sum_{\lambda\vdash n} |C_\lambda|.$

\item This is Burnside's lemma for the symmetric group \cite{DummitFoote,StanEC2}. 

\item Here we consider $S_n$ acting on 2-subsets of $[n]$.  There is only one orbit, and a permutation  fixes a 2-subset $\{i,j\}$ if and only if either $i,j$ are both fixed points, or $i,j$ form a 2-cycle. Hence the number of 2-subsets fixed by  a permutation of cycle type $\lambda$ with  $a_k$ parts of length $k$, is $\binom{a_1}{2}+a_2$, and Burnside's lemma gives 
\begin{equation}\label{eqn:Burnside2sets}
\sum_{\lambda\vdash n} z_\lambda^{-1} \left(\binom{a_1}{2}+a_2\right)=1.
\end{equation}
Similarly, by applying Burnside's lemma to the action of $S_n$ on the set $[n]\times [n]$ of ordered pairs $(i,j)$, which has two orbits $\{(i,i): 1\le i \le n\}$ and $\{(i,j):1\le i,j \le n, i\ne j\} $, and counting fixed points, we obtain
\begin{equation}\label{eqn:Burnside2pairs}
\sum_{\lambda\vdash n} z_\lambda^{-1} \left(a_1+2 \binom{a_1}{2}\right)=2
=\sum_{\lambda\vdash n} z_\lambda^{-1} a_1^2.
\end{equation}

\item Using~\eqref{eqn:Burnside2pairs} and the second identity also gives
\begin{equation}\label{eqn:Burnside3}
\sum_{\lambda\vdash n} 2 z_\lambda^{-1}  \binom{a_1}{2}=1.
\end{equation} The last identity now follows from~\eqref{eqn:Burnside2sets} and~\eqref{eqn:Burnside3}.\qedhere

\end{enumerate}
\end{proof}
\end{lemma}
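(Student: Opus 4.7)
The plan is to obtain all four identities by applying Burnside's lemma (the Cauchy--Frobenius counting formula) to carefully chosen actions of $S_n$, together with the observation that $z_\lambda = n!/|C_\lambda|$, so that for any class function $f$ on $S_n$ one has
\[
\sum_{\lambda \vdash n} z_\lambda^{-1} f(\lambda) \;=\; \frac{1}{n!} \sum_{\omega \in S_n} f(\text{cycle type of } \omega).
\]
The right-hand side is then evaluated via the Burnside identity $\sum_{\omega} |\mathrm{Fix}(\omega)| = |S_n| \cdot (\text{number of orbits})$, after identifying $f(\lambda)$ with the number of fixed points of some natural action.

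For (1), I would simply invoke the class equation $n! = \sum_\lambda |C_\lambda|$, which after dividing by $n!$ gives identity (1); equivalently, this is the trivial action on a one-point set. For (2), I would apply Burnside to the defining action of $S_n$ on $[n]$: a permutation of cycle type $\lambda$ fixes exactly $a_1$ points, and there is one orbit, so $\sum_\lambda z_\lambda^{-1} a_1 = 1$. For (3), I would use the action of $S_n$ on ordered pairs $[n]\times[n]$; a pair $(i,j)$ is fixed by $\omega$ iff both $i$ and $j$ are fixed points of $\omega$, and the number of such pairs is exactly $a_1^2$. This action has two orbits (the diagonal and its complement), giving (3).

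For (4), the natural idea is to apply Burnside to the action of $S_n$ on the $\binom{n}{2}$ unordered $2$-subsets of $[n]$: a $2$-subset $\{i,j\}$ is fixed by $\omega$ iff either $i$ and $j$ are both fixed points or $(i\,j)$ is a $2$-cycle of $\omega$, which gives $\binom{a_1}{2} + a_2$ fixed subsets. Since this action is transitive, we get $\sum_\lambda z_\lambda^{-1}\bigl(\binom{a_1}{2} + a_2\bigr) = 1$. Combining this with the consequence $\sum_\lambda z_\lambda^{-1} \binom{a_1}{2} = \tfrac{1}{2}(\sum_\lambda z_\lambda^{-1} a_1^2 - \sum_\lambda z_\lambda^{-1} a_1) = \tfrac{1}{2}(2 - 1) = \tfrac{1}{2}$ from parts (2) and (3) yields (4).

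There is no real obstacle here: once the correct action is identified in each case, the identities are immediate consequences of Burnside. The only minor subtlety is to notice that the identity for $a_2$ is not obtained directly from a single action but must be extracted by combining the $2$-subset action with the ordered-pair action (or equivalently, by recognizing $\binom{a_1}{2}$ as the number of fixed \emph{unordered} pairs of distinct fixed points). I would present the arguments in the order (1), (2), (3), (4), making each step a single line once the action is named.
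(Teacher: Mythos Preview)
Your proposal is correct and matches the paper's proof almost exactly: both use the class equation for (1), Burnside on $[n]$ for (2), Burnside on $[n]\times[n]$ for (3), and then the action on unordered $2$-subsets combined with the earlier identities to extract (4). The only cosmetic difference is that the paper introduces the $2$-subset action already in the discussion of (3) before using it for (4), whereas you defer it to (4); the logical content is identical.
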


It is now easy to compute the first moments of the preceding statistics over the whole symmetric group; see Table~\ref{tab: SummaryTable} for an overview of our results, as well as  a comparison to the literature. Note that we are able to obtain the first moment over the whole symmetric group without knowledge of the generating function for the statistic. 
Recall the definitions of $\alpha_n(X)=\sum_{1\leq i<j\leq n} \wt(i,j)$ and $\beta_n(X)=\sum_{1\leq i<j\leq n} (j-i-1) \wt(i,j)$ for a weighted inversion statistic $X=\sum_{1\leq i<j\leq n} \wt(i,j)I_{i,j}$ from Theorem~\ref{thm:generalinversion}.

\begin{proposition}\label{prop:Conjclass-to-Sn}  Let $\lambda=(1^{a_1},2^{a_2},\ldots,n^{a_n})\vdash n$, and let $X=\sum_{1\leq i<j\leq n} \wt(i,j) I_{i,j}$ be a weighted inversion statistic. Then
\begin{enumerate}
\item $\E_{S_n}[X]=\frac{\alpha_n(X)}{2}$, and
\item
$\E_\lambda[X]= \E_{S_n}[X] + f^{X}_n(a_1, a_2),$
where $f^{X}_n$ is a polynomial of degree at most $2$ in $a_1$ and $a_2$ such that 
\[\sum_{\lambda\vdash n} z_\lambda^{-1} f^{X}_n(a_1, a_2)=0.\]
\end{enumerate}
\end{proposition}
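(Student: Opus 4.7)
The plan is to apply Equation~\eqref{eqn:lambda-to-Sn} directly, combining the explicit formula for $\E_\lambda[X]$ from \Thm{thm:generalinversion} with the four identities in \Lem{lem:zlambda-formulaS}. Since the formula in \Thm{thm:generalinversion} is linear in $\alpha_n(X)$ and $\beta_n(X)$ with coefficients that are at most quadratic in $a_1, a_2$, averaging against $z_\lambda^{-1}$ reduces entirely to evaluating $\sum_{\lambda \vdash n} z_\lambda^{-1}$ of the monomials $1$, $a_1$, $a_1^2$, and $a_2$, which \Lem{lem:zlambda-formulaS} gives as $1$, $1$, $2$, and $1/2$ respectively.

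For part (1), I would substitute these four values into the two coefficients produced by \Thm{thm:generalinversion}. The coefficient of $\alpha_n(X)$ collapses to $1/2$, since the $a_2$-contribution $\tfrac{1/2}{n(n-1)}$ exactly cancels the $a_1(a_1-1)$-contribution $\tfrac{2-1}{2n(n-1)}$. The coefficient of $\beta_n(X)$ collapses to
\[
\frac{n - n - 1 + 2 - 2(1/2)}{n(n-1)(n-2)} = 0.
\]
These two computations yield $\E_{S_n}[X] = \alpha_n(X)/2$.

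For part (2), I would simply define $f^X_n(a_1, a_2) := \E_\lambda[X] - \E_{S_n}[X]$. By \Thm{thm:generalinversion} combined with part (1), this equals
\[
\left(\frac{a_2}{n(n-1)} - \frac{a_1(a_1-1)}{2n(n-1)}\right)\alpha_n(X) + \left(\frac{n - na_1 - a_1 + a_1^2 - 2a_2}{n(n-1)(n-2)}\right)\beta_n(X),
\]
which is manifestly a polynomial in $a_1, a_2$ of total degree at most $2$ (the only nonlinear term being $a_1^2$). The identity $\sum_{\lambda \vdash n} z_\lambda^{-1} f^X_n(a_1, a_2) = 0$ is then immediate by subtracting $\E_{S_n}[X]$ from both sides of \eqref{eqn:lambda-to-Sn} and invoking part (1).

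There is no serious obstacle: the conceptual work has already been done in \Thm{thm:generalinversion} (the formula for $\E_\lambda[X]$) and \Lem{lem:zlambda-formulaS} (the four moment identities for $z_\lambda^{-1}$); what remains is purely the numerical cancellation described above. The mild care required is in tracking the $\alpha_n(X)$- and $\beta_n(X)$-coefficients separately so that the vanishing of the $\beta_n(X)$-coefficient under averaging is visible.
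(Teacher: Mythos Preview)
Your proposal is correct, and for Part~(2) it matches the paper's argument essentially verbatim: write out $\E_\lambda[X]$ via \Thm{thm:generalinversion}, subtract off $\alpha_n(X)/2$, observe the remainder is quadratic in $a_1,a_2$, and deduce the vanishing of $\sum_\lambda z_\lambda^{-1} f^X_n$ from \Lem{lem:zlambda-formulaS} (equivalently, from \eqref{eqn:lambda-to-Sn}).

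The one genuine difference is in Part~(1). The paper does \emph{not} average over conjugacy classes to compute $\E_{S_n}[X]$; it simply observes directly that $\pr_{S_n}[I_{i,j}=1]=1/2$ for every pair $i<j$ (by the obvious involution $\omega\mapsto (\omega(i)\,\omega(j))\cdot\omega$ on all of $S_n$), whence $\E_{S_n}[X]=\alpha_n(X)/2$ immediately. Your route instead recovers this value by pushing \Thm{thm:generalinversion} through \eqref{eqn:lambda-to-Sn} and invoking all four identities of \Lem{lem:zlambda-formulaS}. Both are valid; the paper's is shorter and self-contained, while yours has the modest virtue of serving as a consistency check between \Thm{thm:generalinversion} and \Lem{lem:zlambda-formulaS}. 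Indeed, the paper remarks that once Part~(1) is in hand ``we can now conclude Part~(2) as well,'' and then carries out your $\beta_n(X)$-coefficient computation anyway precisely to make that consistency visible.
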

\begin{proof} 
Note first that  $\pr_{S_n}[I_{i,j}=1]=1/2$ for $1\le i<j\le n$. The decomposition $X=\sum_{1\le i<j\le n} \wt(i,j) I_{i,j}$ implies
\[\E_{S_n}[X]=\frac{1}{2}\sum_{1\leq i<j\leq n} \wt(i,j).\]
Although we can now conclude Part (2) as well, it is instructive to 
 examine the different contributions to our expression for $\E_\lambda[X]$  more carefully. Since $\beta_n(X)=\sum_{1\leq i<j\leq n} (j-i-1) \wt(i,j)$, from Theorem~\ref{thm:generalinversion} we obtain 
\begin{align*}
\E_{\lambda}[X]&=\left(\frac{1}{2}+\frac{a_2}{n(n-1)}-\frac{a_1(a_1-1)}{2n(n-1)}\right)\alpha_n(X)+\left(\frac{n-n a_1-a_1+a_1^2-2a_2}{n(n-1)(n-2)}\right)\beta_n(X)\\
&= \frac{\alpha_{n}(X)}{2} +\frac{1}{n(n-1)} \left(a_2-\binom{a_1}{2}\right)\alpha_n(X) 
+\frac{1}{n(n-1)(n-2)}\left(n(1-a_1)+2\binom{a_1}{2}-2 a_2\right)\beta_n(X).
\end{align*}

\noindent The function $f_n(X)$ is given by 
\[f_n(X)=\frac{1}{n(n-1)} \left(a_2-\binom{a_1}{2}\right)\alpha_n(X) 
+\frac{1}{n(n-1)(n-2)}\left(n(1-a_1)+2\binom{a_1}{2}-2 a_2\right)\beta_n(X).\]

\noindent Now Lemma~\ref{lem:zlambda-formulaS} guarantees that the two sums \[ \sum_{\lambda\vdash n} z_\lambda^{-1} (1-a_1),\ \ \sum_{\lambda\vdash n} z_\lambda^{-1} \left(a_2-\binom{a_1}{2}\right)\] vanish identically. Since  $\alpha_n(X)$ and $\beta_n(X)$ are independent of $\lambda$, we obtain

\[ 
 \sum_{\lambda\vdash n} z_{\lambda}^{-1} f_n(X)=0 \quad \text{and} \quad \sum_{\lambda \vdash n}  z_{\lambda}^{-1}  \E_{\lambda}[X] =\frac{\alpha_{n}(X)}{2}, \]
\noindent as claimed. 
\end{proof}

 Now let $Y$ be any of the cyclic permutation statistics considered in Section \ref{sec:cyclic}. Arguments analogous to the above give us the following.

\begin{proposition}\label{prop:1stmoments-cyclic-stats-Sn} For any of the cyclic statistics $Y$ from Section \ref{sec:cyclic}, the first moment on the conjugacy class $C_\lambda$  for each $\lambda=(1^{a_1},2^{a_2},\ldots)\vdash n$ is of the form 
\[\E_\lambda[Y]= \E_{S_n}[Y] + g_n(Y),\]
where $g_n(Y)$ is some polynomial of degree at most $1$ in $a_1$ and $a_2$ such that 
$\sum_{\lambda\vdash n} z_\lambda^{-1} g_n(Y)=0.$ We have
\begin{enumerate}
    \item $\E_{S_n}[\exc]=\frac{n-1}{2}$, $\E_{S_n}[\aexc]=\frac{n+1}{2}$,
    \item $\E_{S_n}[\cda]=\frac{n-2}{6}=\E_{S_n}[\cdd] $, and
    \item $\E_{S_n}[\cv]=\frac{2n-1}{6}=\E_{S_n}[\cpk] $.
\end{enumerate}
\begin{proof}
    These follow as in Proposition~\ref{prop:Conjclass-to-Sn}, from Theorem~\ref{thm:Elambda-exc} and Theorem~\ref{thm:Elambda-cdasc-cval}, using Lemma~\ref{lem:zlambda-formulaS}.
\end{proof}
\end{proposition}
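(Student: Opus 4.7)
The plan is to mimic the strategy of Proposition~\ref{prop:Conjclass-to-Sn}. For each cyclic statistic $Y$ in the list, Theorems~\ref{thm:Elambda-exc} and~\ref{thm:Elambda-cdasc-cval} already provide closed forms for $\E_\lambda[Y]$ as affine-linear polynomials in $a_1$ and $a_2$ with coefficients depending only on $n$. So the strategy is: (i) read off $\E_\lambda[Y]$ from those theorems; (ii) average against the weights $z_\lambda^{-1}$ using Equation~\eqref{eqn:lambda-to-Sn} together with the identities in Lemma~\ref{lem:zlambda-formulaS}; (iii) identify $\E_{S_n}[Y]$ as the resulting constant and $g_n(Y):=\E_\lambda[Y]-\E_{S_n}[Y]$ as the remaining polynomial of degree at most $1$ in $a_1, a_2$, whose weighted sum $\sum_\lambda z_\lambda^{-1} g_n(Y)$ then vanishes tautologically.

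First I would record, from Theorems~\ref{thm:Elambda-exc} and~\ref{thm:Elambda-cdasc-cval}, the explicit formulas
\[
\E_\lambda[\exc]=\tfrac{1}{2}(n-a_1),\qquad \E_\lambda[\cda]=\tfrac{1}{6}(n-a_1-2a_2),\qquad \E_\lambda[\cv]=\tfrac{1}{3}(n-a_1+a_2),
\]
along with the three equidistributions $\E_\lambda[\aexc]=\E_\lambda[\exc]$, $\E_\lambda[\cdd]=\E_\lambda[\cda]$, and $\E_\lambda[\cpk]=\E_\lambda[\cv]$ established in Section~\ref{sec:cyclic} via the inversion map and the reversing involution $i\mapsto n+1-i$. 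Each of these already has the form $c_n+\varphi(a_1,a_2)$ with $\varphi$ a fixed polynomial of degree at most one, so the claim that $g_n(Y)$ has degree at most $1$ in $a_1,a_2$ will be immediate.

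Next, summing against $z_\lambda^{-1}$ and invoking Lemma~\ref{lem:zlambda-formulaS}, which supplies $\sum_\lambda z_\lambda^{-1}=1$, $\sum_\lambda z_\lambda^{-1}a_1=1$, and $\sum_\lambda z_\lambda^{-1}a_2=\tfrac{1}{2}$, produces the listed values of $\E_{S_n}[Y]$ by direct substitution. For example,
\[
\E_{S_n}[\cv]=\sum_{\lambda\vdash n}z_\lambda^{-1}\cdot\tfrac{1}{3}(n-a_1+a_2)=\tfrac{1}{3}\!\left(n-1+\tfrac{1}{2}\right)=\tfrac{2n-1}{6},
\]
and the computations for $\exc$ and $\cda$, and their equidistributed partners $\aexc,\cpk,\cdd$, are entirely analogous. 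Defining $g_n(Y):=\E_\lambda[Y]-\E_{S_n}[Y]$ then yields the decomposition asserted in the proposition, and $\sum_\lambda z_\lambda^{-1}g_n(Y)=0$ because $\sum_\lambda z_\lambda^{-1}\E_\lambda[Y]=\E_{S_n}[Y]$ by~\eqref{eqn:lambda-to-Sn}.

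There is no real obstacle: everything reduces to substituting the earlier conjugacy-class formulas into the class equation and applying Lemma~\ref{lem:zlambda-formulaS}. The only mild bookkeeping is tracking which statistics are paired by equidistribution so that each pair shares the same $\E_{S_n}[Y]$; this pairing is already justified in Section~\ref{sec:cyclic}, so it can simply be cited.
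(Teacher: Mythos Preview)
Your proposal is correct and is exactly the approach the paper takes: cite Theorems~\ref{thm:Elambda-exc} and~\ref{thm:Elambda-cdasc-cval} for the conjugacy-class formulas, apply Lemma~\ref{lem:zlambda-formulaS} via~\eqref{eqn:lambda-to-Sn}, and read off $\E_{S_n}[Y]$ and the linear remainder $g_n(Y)$. One small note: since $\E_\lambda[\aexc]=\E_\lambda[\exc]=\tfrac12(n-a_1)$, your computation actually yields $\E_{S_n}[\aexc]=\tfrac{n-1}{2}$ (consistent with Table~\ref{tab: SummaryTable}), so the value $\tfrac{n+1}{2}$ printed in the statement appears to be a typo rather than something your argument should reproduce.
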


We conclude this section by noting that we can now also compute the variance of the statistic $\exc$, thanks to  
 the following generating function derived in \cite{CooperJonesZhuang2020}.

 Recall that $C_\lambda$ denotes the conjugacy class in $S_n$ indexed by the partition $\lambda$. 

\begin{proposition}\cite[Corollary 7]{CooperJonesZhuang2020} Let $\lambda $ be a partition of $n$ with $a_1$ parts of size 1. Then 
\[\sum_{w\in C_\lambda} t^{\exc(w)} = \sum_{i=0}^{\lfloor{(n-a_1)/2}\rfloor} \gamma_i t^i (1+t)^{n-a_1-2i},\]
where $\gamma_i= 2^{-n+a_1+2i} |\{w\in C_\lambda: \cv(w)=i\} |$.
\end{proposition}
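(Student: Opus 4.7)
My approach is to prove the identity via a cyclic analogue of the Foata--Strehl ``valley-hopping'' action, combined with the decomposition $\exc(\omega) = \cv(\omega) + \cda(\omega)$ noted at the end of Section~\ref{sec:cyclic}.  The idea is to partition $C_\lambda$ by cyclic-valley count, exhibit a free $(\mathbb{Z}/2\mathbb{Z})^{n-a_1-2i}$-action on each piece that toggles individual cyclic double ascents with cyclic double descents while preserving cycle type and the cyclic valley/peak sets, and then read off the gamma-expansion from the orbit structure.

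First, set $S_i := \{\omega \in C_\lambda : \cv(\omega)=i\}$, so that
\[
\sum_{\omega \in C_\lambda} t^{\exc(\omega)} \;=\; \sum_{i\ge 0} t^{i} \sum_{\omega \in S_i} t^{\cda(\omega)}.
\]
Since cyclic valleys and cyclic peaks alternate along any cycle of length $\ge 2$, we have $\cpk(\omega) = \cv(\omega) = i$ for every $\omega \in S_i$, so each such $\omega$ contains exactly $n - a_1 - 2i$ positions that are neither fixed points, valleys, nor peaks, each of which is either a CDA or a CDD.  It therefore suffices to show, for each $i$, that
\[
\sum_{\omega \in S_i} t^{\cda(\omega)} \;=\; \gamma_i\,(1+t)^{n-a_1-2i}.
\]

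Next, I would construct, for each non-valley, non-peak, non-fixed position $x$, an involution $\phi_x : S_i \to S_i$ that toggles the CDA/CDD status of $x$ and leaves both the conjugacy class and the cyclic valley/peak sets unchanged.  Informally, within the cycle containing $x$, view the cyclic sequence of values as an alternating pattern of valleys, ascending runs, peaks, and descending runs; the map $\phi_x$ relocates $x$ between a monotonically ordered ascending run and an adjacent descending run, choosing the arc in such a way that the bounding inequality $v < x < p$ (between a valley $v$ and peak $p$) continues to hold, and then re-sorting the affected runs to restore monotonicity.  Once the $\phi_x$ are shown to commute pairwise and to act freely, they generate a free $(\mathbb{Z}/2\mathbb{Z})^{n-a_1-2i}$-action on $S_i$ with orbits of size exactly $2^{n-a_1-2i}$.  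Within each orbit $\mathcal{O}$ the $n-a_1-2i$ toggles are independent, so
\[
\sum_{\omega \in \mathcal{O}} t^{\cda(\omega)} \;=\; \sum_{j=0}^{n-a_1-2i} \binom{n-a_1-2i}{j} t^{j} \;=\; (1+t)^{n-a_1-2i},
\]
and there are exactly $|S_i|/2^{n-a_1-2i} = \gamma_i$ such orbits.  Multiplying by $t^{i}$ and summing over $i$ yields the claimed identity.

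The main technical obstacle will be the precise definition and verification of $\phi_x$ in the middle step.  The subtlety is that an element $x$ whose value lies between the two bounding cyclic valleys of its current ``double arc'' (valley-peak-valley block) cannot simply be swapped to the opposite run of the same block without violating the monotonicity or the valley/peak status of some neighbor; in such cases $\phi_x$ must instead migrate $x$ across a neighboring cyclic valley into an adjacent double arc sharing the correct bounding inequality.  Making these reassignments explicit, and checking that the resulting involutions (i) preserve cycle type, (ii) preserve both the cyclic valley set and the cyclic peak set, (iii) commute pairwise, and (iv) act freely so that orbits have the full size $2^{n-a_1-2i}$, is the heart of the argument; everything else is bookkeeping.
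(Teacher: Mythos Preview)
The paper does not give its own proof of this proposition; it is quoted verbatim from \cite[Corollary~7]{CooperJonesZhuang2020} and used as a black box. So there is no ``paper's proof'' to match against. That said, your plan---a cyclic modified Foata--Strehl (valley-hopping) action combined with $\exc=\cv+\cda$ and the observation $\cv(\omega)=\cpk(\omega)$---is exactly the standard route to this kind of $\gamma$-expansion, and is essentially how the result is obtained in the cited source. The reduction to showing $\sum_{\omega\in S_i} t^{\cda(\omega)}=\gamma_i(1+t)^{n-a_1-2i}$ via a free $(\mathbb{Z}/2\mathbb{Z})^{n-a_1-2i}$-action is correct and well framed.

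Where your write-up wobbles is in the description of $\phi_x$ and its ``subtlety''. In the standard construction one does not re-sort runs or conditionally ``migrate $x$ across a neighbouring cyclic valley''. Rather, for each CDA/CDD value $x$ one takes the unique maximal cyclic factor containing $x$ all of whose interior entries are smaller than $x$; this factor is bounded on both ends by entries larger than $x$ and contains exactly one cyclic valley. The involution $\phi_x$ simply reflects $x$ to the other side of that valley (equivalently, swaps the two wings of the factor). With this definition cycle type and the valley/peak sets are visibly preserved, the $\phi_x$ commute because the associated factors for distinct $x$ are nested or disjoint, and freeness is immediate. Your phrasing ``whose value lies between the two bounding cyclic valleys'' and the suggestion that $\phi_x$ sometimes crosses extra valleys does not match this picture; as written it is not clear your map is well defined or that the commutativity check would go through. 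I would recommend reformulating $\phi_x$ in terms of the $x$-factorization above, after which properties (i)--(iv) in your last paragraph become routine.
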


\noindent From this  we can compute, essentially by differentiating twice to get the generating function for $\exc^2$, the second moment over the conjugacy class $C_\lambda$:
\begin{equation*}
\E_\lambda[\exc^2]=\frac{(n-a_1)(n-a_1+1)}{4} -\frac{1}{2} \E_\lambda[\cv]
=\frac{(n-a_1)^2}{4}+\frac{n-a_1}{4}-\frac{n-a_1+a_2}{6},
\end{equation*}
and therefore the variance
\begin{equation*}
    \operatorname{Var}_\lambda[\exc]=\E_\lambda[\exc^2]-\frac{(n-a_1)^2}{4}=\frac{n-a_1-2a_2}{12}.
\end{equation*}
Hence we obtain, using Lemma~\ref{lem:zlambda-formulaS}, the second moment over all of $S_n$,
\begin{equation*}
    \E_{S_n}[\exc^2]=\frac{(3n-2)(n+1)}{12},
\end{equation*}
and the variance over all of $S_n$,
\begin{equation*}
    \operatorname{Var}_{S_n}[\exc]=\frac{n-2}{12}.
\end{equation*}

\section{Permutation constraints and higher moments}\label{sec:highermoments}

In this section, we examine permutation statistics that track permutations respecting a specified partial function. Somewhat surprisingly, this notion captures the entire class of permutation statistics. This formulation allows us to extend a technique of Fulman \cite[Theorem~3]{FulmanJCTA1998} to establish an independence result for the $k$th moment ($k \geq 1$) across individual conjugacy classes of arbitrary permutation statistics, provided each part of the indexing permutation is sufficiently large. Fulman \cite[Corollary~5]{FulmanJCTA1998} established the analogous result for $d(\omega)$ and $\maj$. In the symmetric case, we also show that these higher moments are polynomials in $n$.

We first start by defining the notion of a  permutation constraint statistic.

\begin{definition} \label{def:AcyclicityConditions} 
Suppose we have a set of pairs $K := \{(i_1,j_1),(i_2,j_2), \dots, (i_\ell,j_\ell)\}$ with each $i_t \in [n], j_t \in [n]$.  We call this a \textit{(permutation) constraint} and say it has \textit{size} $m$ if $K$ contains $m$ pairs. Note that since $K$ is a set, repeated pairs are not allowed.  We say $\omega \in S_n$ \textit{satisfies $K$} if for each $(i_t,j_t)\in K$, $\omega(i_t) = j_t$.  We say that $K$ is \textit{well-defined} if all the $i_t \in [n]$ are distinct and all the $j_t \in [n]$ are distinct; note that some $i_t$ may be equal to some $j_s$. Define the \textit{support} of a constraint $K$ to be the set of all (distinct) $i_t$ and $j_s$.
     
Given a constraint $K$, construct the graph $G(K)$ on vertices $[n]$ by drawing an edge between each pair $(i_t,j_t)$.  We say that $K$ is \emph{acyclic of size $m$} if $K$ is well-defined and $G(K)$ is acyclic with $m$ edges. Note that the graph constructed from a set of acyclic constraints will be a set of disconnected directed paths.
\end{definition}

\begin{ex}
Consider the constraint $K = \{ (1,2), (2,3) \}$ of size $2$. The permutation $(1234)$ satisfies $K$, as $(1234)$ maps $1 \mapsto 2$ (specified by $(1,2) \in K$) and $2 \mapsto 3$ (specified by $(2,3) \in K$). Intuitively, permutations that satisfy $K$ contain $123$ as a subsequence within the \textit{same} cycle. 
\end{ex}

\begin{ex}
Consider the acyclic constraint $K = \{ (1,2), (2,3), (3,4)\}$ of size $3$.  The permutation $(1234)$ satisfies $K$. Now the graph arising from $K$ as in \Def{def:AcyclicityConditions} is acyclic -- in particular, observe that the constraint $(4,1) \not \in K$. Nonetheless, $(1234)$ is a closed cycle. Thus, there may be cycles in the support of a constraint $K$, even when $K$ is itself acyclic. 
\end{ex}

Permutation constraints induce statistics on $S_n$, which we formalize as follows.

\begin{definition}\label{def:PermutationConstraintStatistic}
     Let $\mathcal{C}$ be a set of permutation constraints. The \emph{size} of $\mathcal{C}$, denoted $\text{size}(\mathcal{C})$, is the maximum of the sizes of the constraints in $\mathcal{C}$. Note that while the size of a single constraint $K\in \mathcal{C}$ is simply its size as a set, this is not true for a set of constraints $\mathcal{C}$.
\end{definition}

\begin{definition}
    A \textit{weighted constraint statistic} $X$ is any statistic which can be expressed in the form $\sum_{K \in \mathcal{C}} \wt(K)I_K$ where $\mathcal{C}$ is a set of constraints, $I_K$ is the indicator function that a permutation satisfies the constraint $K$, and weights $\wt(K)\in \mathbb{R}\setminus \{0\}$ for all $K$. In this case, we say $X$ is \emph{realizable} over $\mathcal{C}$. If $X$ can be expressed in this form with $\wt(K)=1$ for all $K\in \mathcal{C}$, then $X$ is the \textit{unweighted constraint statistic} induced by $\mathcal{C}$.
    
    Note that in general, the decomposition $\sum_{K \in \mathcal{C}} \wt(K)I_K$ is not unique. The \emph{size} of a weighted constraint statistic $X$ is defined as
    \[\text{size}(X)=\min\left\{\text{size}(\mathcal{C})\, \bigg| \, X=\sum_{K\in \mathcal{C}}\wt(K)I_K \text{ for }\wt(K)\in \mathbb{R}\setminus \{0\}\right\}.\]
\end{definition}

\begin{remark}
It turns out that the class of weighted constraint statistics actually captures all permutation statistics. Fix $n \geq 1$. For a permutation $\omega \in S_{n}$, consider its graph $\mathcal{G}_{\omega} = \{ (i, \omega(i)) : i \in [n]\}$. The indicator for the constraint induced by $\mathcal{G}_{\omega}$ is precisely the indicator function for the constraint specified by the permutation $\omega$. The class of weighted constraint statistics includes indicator functions for any single permutation, as well as $\mathbb{R}$-linear combinations of them. This in turn captures the algebra of functions from $S_{n} \to \mathbb{R}$.

In this section, we will establish independence results for higher moments of permutation statistics on individual conjugacy classes, provided all parts of the indexing partition are sufficiently large compared to the size of the statistic. Thus, when investigating an individual permutation statistic $X$, it is of interest to exhibit \emph{small} constraint sets that realize $X$. 
\end{remark}

\begin{remark}
Any unweighted constraint statistic $X$ can also be considered as a weighted constraint statistic. In general, the size of $X$ as an unweighted permutation constraint statistic may be different than when viewing it as a weighted constraint statistic, though we only consider the notion of size for weighted constraint statistics. 
\end{remark}

The above definitions are a little abstract and very general, so we first give a few familiar examples.

\begin{ex}
The number of fixed points is a constraint statistic of size $1$.  To see this, let $\mathcal{\mathcal{C}_\text{fix}}$ be the set of all constraints $\{ \{(i,i) \}: i = 1,\dots,n\}$.  Then we have
    $$
        \text{Fix}(\omega) = \sum_{K \in \mathcal{C}_\text{fix}} I_K(\omega).
    $$
\end{ex}

\begin{ex} \label{ex:Cij}
    Let $\mathcal{C}_{i,j}$ be the set of all constraints $\{ \{ (i,a), (j,b) \} $ : $a > b\}$. Then we may express $\des, \maj$, and $\inv$ in terms of these, meaning that these are weighted constraint statistics of size at most $2$ (and indeed, $\des$ and $\inv$ are unweighted). In particular define the following:
    \begin{itemize}
        \item $\mathcal{C}_{\text{inv}} = \cup_{1 \leq i<j \leq n} \mathcal{C}_{i,j}$, and
        \item $\mathcal{C}_{\text{des}} = \cup_{1 \leq i \leq n-1} \mathcal{C}_{i,i+1}$.
    \end{itemize}
    Then setting $\wt(\{(i,a),(j,b)\}):=i$, we obtain
    $$
        \maj(\omega) = \sum_{K \in \mathcal{C}_{\text{des}}} \wt(K)I_K(\omega).
    $$
    Similar formulas exist for $\des$ and $\inv$.  We can also obtain more general statistics such as cyclic descents.  For example,
    $$
        \mathcal{C}_{\cdes} = \mathcal{C}_{\des} \cup \mathcal{C}_{n,1}.
    $$
    Then in a similar manner to before we have that
    $$
        \cdes(\omega) = \sum_{K \in \mathcal{C}_{\cdes}} I_K(\omega).
    $$
    Note that these statistics actually have size equal to $2$.  This fact follows from our work on first moments, combined with Corollary \ref{cor:sizelowerbound} below.
\end{ex}

We give another example of a weighted constraint statistic that is not a weighted inversion statistic: excedance.  

\begin{ex}
    Recall that an excedance is defined as an $i \in [n]$ with $\omega(i) > i$.  We can define the corresponding set of constraints as follows:
    $$
        \mathcal{C}_{\exc} = \cup_{1 \leq i < j \leq n} \{\{(i, j)\}\}. 
    $$
    Then we have that
    $$
        \exc(\omega) = \sum_{K \in \mathcal{C}_{\exc}} I_K(\omega).
    $$
\end{ex}

\begin{remark}
Note that weighted constraint statistics, even those that are realizable over constraints of size $2$, are more general than weighted inversion statistics. Furthermore, permutation statistics realizable over constraints of size $3$ already capture all 14 of the statistics from \cite{blitvic2021permutations}. For instance, we will see below that the number of inversions between excedances where the greater excedance is \emph{linked} (denoted \text{ile}) is realizable  over symmetric constraints of size $3$. 
\end{remark}

\begin{ex}  
The number of inversions between excedances where the greater excedance is \emph{linked} is defined \cite{blitvic2021permutations} by
$$
\text{ile}(\omega) := \# \{(i,j) \in [n] \times [n] : i < j < \omega(j) < \omega(i) \text{ and } \omega^{-1}(j) < j \}.
$$
(Recall from  Section~\ref{sec:Exc} that the linked excedances of \cite{blitvic2021permutations} coincide with the cyclic double ascents of \cite{CooperJonesZhuang2020}.)

We are therefore counting occurrences of $i<j$ with $\omega^{-1}(j) < j < \omega(j) < \omega(i)$.  This means we can define the following set of all constraints:
$$
\mathcal{C}_{\text{ile}} := \cup_{1 \leq i < j \leq n} \{ \{ (i,a), (j,b), (k,j) \} : k < j < b < a \}.
$$
In a similar manner to before this gives
$$
\text{ile}(\omega) = \sum_{K \in \mathcal{C}_{\text{ile}}} I_K(\omega).
$$
\end{ex}

\begin{ex}\label{ex:Denert}\cite{FoataZeilb1990}  The \emph{Denert} statistic is defined by
\begin{equation*}
\begin{split}\mathrm{den}(\omega):= 
&\,\#\{1\le i<j\le n: \omega(j)<\omega(i)\le j\}\\
+&\,\#\{1\le i<j\le n: \omega(i)\le j<\omega(j)\}\\
+&\,\#\{1\le i<j\le n: j< \omega(j)<\omega(i)\}
\end{split}
\end{equation*}
The statistic $\mathrm{den}$ has the property that the joint distributions of the pairs $(\exc, \mathrm{den})$ and $(\des, \maj)$ coincide.  Such pairs are called Euler-Mahonian in the literature 
\cite{FoataZeilb1990}.

Observe that $\mathrm{den}$ may be realized as an unweighted constraint statistic induced by a constraint set of size 2, since we have 
\[\mathrm{den}(\omega)=\sum_{K\in \mathcal{C}_{\mathrm{den}}} I_K(\omega)\]
for the set of constraints
\begin{equation*}
\begin{split} \mathcal{C}_{\mathrm{den}}:=
&\cup_{1\le i<j\le n} \{\{(i,a), (j,b)\}: b<a\le j\}\\
\bigcup & \cup_{1\le i<j\le n} \{\{(i,a), (j,b)\}: a\le j <b\}\\
\bigcup &\cup_{1\le i<j\le n} \{\{(i,a), (j,b)\}: j<b<a\}.
\end{split}
\end{equation*}
    
\end{ex}

We now give a relatively simple observation.

\begin{proposition}
    Let $K$ be a well-defined constraint of size $m$.  Then we have:
    $$
        \pr_{S_n}[\omega \text{ satisfies } K] =  \frac{1}{n(n-1)(n-2)\dots(n-m+1)}.
    $$
\end{proposition}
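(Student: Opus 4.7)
The plan is to count permutations satisfying $K$ directly and divide by $|S_n| = n!$. Since $K$ is well-defined, the $i_t$'s are distinct and the $j_t$'s are distinct, so the partial assignment $i_t \mapsto j_t$ for $t = 1, \dots, m$ is a valid partial injection from an $m$-subset of $[n]$ to an $m$-subset of $[n]$.

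First I would observe that any $\omega \in S_n$ satisfying $K$ is completely determined on the set $\{i_1, \dots, i_m\}$ (with image $\{j_1, \dots, j_m\}$), and on the complementary set $[n] \setminus \{i_1, \dots, i_m\}$ it must restrict to a bijection onto $[n] \setminus \{j_1, \dots, j_m\}$. Since both complements have size $n - m$, there are exactly $(n-m)!$ such extensions. Therefore the number of permutations satisfying $K$ is $(n-m)!$, and the probability is
\[
\frac{(n-m)!}{n!} = \frac{1}{n(n-1)(n-2)\cdots(n-m+1)},
\]
as desired.

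The only subtle point is the role of well-definedness: without the hypothesis that the $i_t$'s are all distinct and the $j_t$'s are all distinct, one could have an inconsistent constraint (some $i_t = i_s$ with $j_t \neq j_s$), in which case no permutation satisfies $K$ and the probability would be $0$. The well-definedness assumption rules this out and guarantees that the partial assignment can always be extended to a full permutation, giving the count $(n-m)!$. There is no real obstacle here; the statement is essentially a restatement of the falling-factorial formula for partial injections.
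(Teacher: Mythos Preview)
Your proof is correct and follows essentially the same approach as the paper: both count the permutations satisfying $K$ as $(n-m)!$ by noting that the values on $\{i_1,\dots,i_m\}$ are fixed and the remaining $n-m$ positions can be filled by any bijection onto the complement, then divide by $n!$. Your added remark on the role of well-definedness is accurate and slightly more explicit than the paper's version, but the argument is the same.
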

\begin{proof}
    Let $K :=\{ (i_1,j_1),(i_2,j_2), \dots, (i_m,j_m)\}$, and suppose $\omega$ satisfies $K$.  This means that we have $\omega(i_t) = j_t$ for $t=1,\dots,m$, which is possible since the constraint is well-defined.  The number of permutations which satisfy these $m$ values is just the number of permutations on the remaining $n-m$ symbols, which is $(n-m)!$.  Therefore the probability of a random permutation satisfying $K$ is $(n-m)!/n!$ as required.
\end{proof}

We are interested in the behavior of certain constraint statistics on fixed conjugacy classes.  The key result of this section is the following, which says that for $\lambda$ with all parts ``large,'' the probability of a permutation in $C_\lambda$ satisfying a constraint is only dependent on whether the constraint set is acyclic.

\begin{lemma}\label{lem:size}
    Let $\lambda$ have all parts of size at least $m+1$, and let $K$ be a constraint of size $m$.  If $K$ is acyclic then we have
    \begin{align*}
        \pr_{\lambda}[\omega \text{ satisfies } K] = \frac{1}{(n-1)(n-2)\dots(n-m)}.
    \end{align*}
    If $K$ is not acyclic then we have
    $$
        \pr_{\lambda}[\omega \text{ satisfies } K] = 0.
    $$
\end{lemma}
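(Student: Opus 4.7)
The plan is to split on whether $K$ is acyclic. If $K$ is not acyclic, then either $K$ is not well-defined (in which case no bijection satisfies it) or $G(K)$ contains a directed cycle of some length $\ell$ with $1 \leq \ell \leq m$; in the latter subcase, any $\omega$ satisfying $K$ would have the edges of this cycle as part of its functional graph, forcing $\omega$ to contain an $\ell$-cycle, which contradicts every cycle of $\omega \in C_\lambda$ having length at least $m+1 > \ell$. Either way, $\pr_\lambda[\omega \text{ satisfies } K] = 0$.

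For the acyclic case, $G(K)$ decomposes as a disjoint union of $r$ directed paths with edge-lengths $p_1, \dots, p_r$ summing to $m$. I would parameterize $C_\lambda$ by slot-labelings: fix a canonical reference permutation $\pi \in C_\lambda$ on slot set $[n]$, so that every $\omega \in C_\lambda$ equals $\sigma^{-1}\pi\sigma$ for exactly $z_\lambda$ bijections $\sigma: [n] \to [n]$. The condition $\omega(v) = w$ is equivalent to $\sigma(w) = \pi(\sigma(v))$, so $\omega$ satisfies $K$ iff $\sigma$ maps each directed path of $G(K)$ to a $\pi$-consecutive block of slots lying inside a single cycle of $\pi$.

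I would count the respecting $\sigma$'s as (number of valid placements of the $r$ paths inside $\pi$'s cycle structure) times $(n-m-r)!$ (arbitrary bijections of the remaining $n-m-r$ labels with the remaining $n-m-r$ slots). The main sub-claim, proved by induction on $r$, is that the number of valid placements equals $n(n-m-1)(n-m-2)\cdots(n-m-r+1)$, independent of $\lambda$. The base case $r=1$ is immediate, since every one of the $n$ slots serves as a valid starting slot (each cycle has length at least $p_1+1$). For the inductive step, the number of legal starting slots for the $r$-th path inside a single cycle depends on the gap structure left by the earlier paths, but summing across all cycles (and across all placements of the earlier paths) telescopes to a uniform $\lambda$-independent expression. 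This yields $n(n-m-1)!$ respecting $\sigma$'s in total, and dividing by $z_\lambda$ gives $N_\lambda(K) = |C_\lambda|/[(n-1)(n-2)\cdots(n-m)]$, matching the claimed probability.

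The main obstacle will be justifying the telescoping step rigorously. For a cycle of length $\ell$ already containing $h \geq 1$ of the earlier paths (with total slot usage $V$ and gap sizes $g_1, \dots, g_h$ summing to $\ell - V$), the number of valid starting slots for a new path of edge-length $p$ is $\sum_j \max(0, g_j - p)$, which depends sensitively on the gap distribution; the resolution is to carry out the sum over placements of the earlier paths at once, where cycle-local contributions combine cleanly with those from cycles having no prior paths. As a backup route I would also consider a direct bijection, marking the first path's starting slot to split each respecting $\sigma$ into a pair (marked slot in $[n]$, linear arrangement of the remaining $n-m-1$ labels consistent with the rest of $K$), which would yield the $n(n-m-1)!$ count directly without the per-step case analysis.
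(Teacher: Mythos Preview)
Your non-acyclic case matches the paper's. For the acyclic case, your conjugation-parameterization approach is genuinely different from the paper: the paper inducts on $m$ (the constraint size), conditioning on the single event $\omega(i_m)=j_m$ and on which cycle contains $i_m$, thereby reducing to an acyclic constraint of size $m-1$ on a partition of $n-1$. No path-placement combinatorics enters.

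Your reduction to counting respecting $\sigma$'s is correct, and the target $n(n-m-1)!$ is the right number. But the induction on $r$ has a real gap precisely where you flag it. For $r=2$ it happens that the number of legal starting slots for the second path, given \emph{any} placement of the first, is exactly $n-m-1$ (the $\ell_1$-terms cancel), so the product $n(n-m-1)$ arises termwise. For $r\ge 3$ this fails: with three one-edge paths placed so that paths $1$ and $2$ land in the same cycle, the number of legal starts for path $3$ depends on the two gap sizes and is not constant across placements. Hence $n(n-m-1)(n-m-2)$ is \emph{not} a product of conditional counts, and ``telescopes to a uniform expression'' is not yet an argument --- you would need an additional averaging identity you have not supplied. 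The backup bijection is also underspecified: once you mark path $1$'s starting slot, the remaining data is not simply a linear arrangement of $n-m-1$ labels, because paths $2,\dots,r$ may land in cycles of $\pi$ different from the one containing path $1$.

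The cleanest repair is to induct on $m$ rather than $r$, as the paper does: peel off a terminal edge $(i_m,j_m)$ from some path in $G(K)$, condition on $\omega(i_m)=j_m$ (and on which cycle holds $i_m$), and reduce to a size-$(m-1)$ acyclic constraint on a partition of $n-1$. This sidesteps the gap-structure bookkeeping entirely.
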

\begin{proof}
We first note that if $K$ is not acyclic, then in order for $\omega$ to satisfy $K$, $\omega$ must contain a cycle induced by constraints in $K$.  Since $K$ has size $m$, then this cycle is of length at most $m$.  However we assumed $\omega$ is of cycle type $\lambda$ with all cycles of length at least $m+1$, so this is not possible.

    Now suppose $K$ is acyclic.  We fix $n$ and then prove this lemma by induction on $m$.  For $m=1$, we will show that
    $$
        \pr_{\lambda}[\omega(i_1) = j_1] = \frac{1}{n-1}.
    $$
    This follows from the fact that conjugating by $(j_1 \, k)$ for any $k \neq i_1, j_1$ maps from the set of $\omega$ with $\omega(i_1) = j_1$ to those with $\omega(i_1) = k$.  Therefore this probability is the same for each $j_1 \neq i_1$, and is zero for $i_1 = j_1$ since $\lambda$ is fixed point free.  Therefore the probability is $1/(n-1)$ as required.
    
    Assume the statement is true for $m-1$.  Let $A = \{(i_1, j_1), \dots, (i_m, j_m) \}$ be an acyclic constraint of size $m$.  Let $\lambda \vdash n$ have all parts of size at least $m+1$, and label the cycles of any permutation in $C_\lambda$ by $c_1, \dots, c_t$.  By Definition~\ref{def:PermutationConstraintStatistic}, we have
\begin{align*}
\pr_{\lambda}[\omega \text{ satisfies } A] &= \pr_{\lambda}\left[ \bigwedge_{\ell=1}^m \omega(i_\ell) = j_\ell \right] \\
&= \pr_{\lambda}\left[\bigwedge_{\ell=1}^{m-1} \omega(i_\ell) = j_\ell \biggr| \, \omega(i_m) = j_m \right] \cdot \pr_{\lambda}[\omega(i_m) = j_m] \\
&= \frac{1}{n-1} \sum_{h=1}^t \pr_{\lambda}\left[\bigwedge_{\ell=1}^{m-1} \biggr( \omega(i_\ell) = j_\ell \land i_m \in c_h \biggr) \biggr| \, \omega(i_m) = j_m \right] \\
&= \frac{1}{n-1} \sum_{h=1}^t \pr_{\lambda}\left[\bigwedge_{\ell=1}^{m-1} \omega(i_\ell) = j_\ell \, \biggr| \, i_m \in c_h \land \omega(i_m) = j_m \right] 
        \cdot \pr_{\lambda}[i_m \in c_h \mid \omega(i_m) = j_m]. 
\end{align*}
Notice that $A' := \{(i_1,j_1), \dots, (i_{m-1},j_{m-1})\}$ is an acyclic constraint of size $m-1$.  Let $\lambda'(h)$ be the partition obtained by reducing the size of the $h^{th}$ part of $\lambda$ by one.  This is a partition of an $(n-1)$-element set (though perhaps not $[n-1]$) with all parts of size at least $m-1$. 
    It is then fairly straightforward to see that
\[
\pr_{\lambda}\left[\bigwedge_{\ell=1}^{m-1} \omega(i_\ell) = j_\ell \, \biggr| \, i_m \in c_h \land \omega(i_m) = j_m \right] = \pr_{\lambda'(h)}[\omega \text{ satisfies } A'] = \frac{1}{(n-2)(n-3)\dots(n-m)}, 
\]
    where the last equality follows by the induction hypothesis. Note that the first term is $n-2$, as the probability is in $S_{n-1}$. Putting this altogether gives
    \begin{align*}
        \pr_{\lambda}[\omega \text{ satisfies } A] &=  \frac{1}{n-1} \sum_{h=1}^t \frac{1}{(n-2)(n-3)\dots(n-m)} \frac{\lambda_h}{n} \\
        &= \frac{1}{(n-1)(n-2)\dots(n-m)}. 
    \end{align*}
    This completes the inductive step and the proof.
\end{proof}

As a consequence, we obtain that for each $k$, the $k$th moment of these statistics is independent of conjugacy class, as long as the cycles are sufficiently long.

\begin{theorem} \label{thm:IndependenceHigherMoments}
    Let $X$ be a permutation statistic that is realizable over a constraint set of size $m$, and fix $k \geq 1$. If $\lambda\vdash n$ has all parts of size at least $mk+1$, then $\E_{\lambda}[X^k]$ is independent of $\lambda$.
\end{theorem}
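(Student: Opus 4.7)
Write $X = \sum_{K \in \mathcal{C}} \wt(K)\, I_K$, where $\mathcal{C}$ is a constraint set with $\text{size}(\mathcal{C}) \le m$. Expanding the product of sums, one gets
\[
X^k = \sum_{(K_1,\ldots,K_k) \in \mathcal{C}^k} \wt(K_1)\cdots \wt(K_k)\, I_{K_1} I_{K_2} \cdots I_{K_k}.
\]
The first key observation is that $I_{K_1} \cdots I_{K_k}$ is itself the indicator function of a single constraint, namely $\widetilde{K} := K_1 \cup K_2 \cup \cdots \cup K_k$ viewed as a set of pairs: $\omega$ satisfies every $K_i$ iff $\omega$ satisfies $\widetilde{K}$. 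Since each $K_i$ has at most $m$ pairs, $|\widetilde{K}| \le mk$.

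By linearity,
\[
\E_\lambda[X^k] = \sum_{(K_1,\ldots,K_k) \in \mathcal{C}^k} \wt(K_1)\cdots \wt(K_k)\, \pr_\lambda[\omega \text{ satisfies } \widetilde{K}].
\]
Now I would classify each tuple $(K_1,\ldots,K_k)$ by the structure of $\widetilde{K}$:
\begin{itemize}
    \item If $\widetilde{K}$ is not well-defined (i.e.\ it contains two pairs sharing a first coordinate or two pairs sharing a second coordinate), then no permutation can satisfy it, so the contribution is $0$.
    \item If $\widetilde{K}$ is well-defined and contains a directed cycle, then since $|\widetilde{K}| \le mk$ the cycle has length at most $mk$. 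But every part of $\lambda$ has size at least $mk+1$, so no $\omega \in C_\lambda$ can contain such a short cycle, and the contribution is $0$ by the non-acyclic case of \Lem{lem:size} (applied with the bound $mk$ in place of $m$).
    \item If $\widetilde{K}$ is well-defined and acyclic, with $|\widetilde{K}| = s \le mk$, then since the hypothesis on $\lambda$ gives parts $\ge s+1$, the acyclic case of \Lem{lem:size} yields
    \[
    \pr_\lambda[\omega \text{ satisfies } \widetilde{K}] = \frac{1}{(n-1)(n-2)\cdots(n-s)},
    \]
    which depends only on $n$ and $s$, not on $\lambda$.
\end{itemize}

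Since every nonzero contribution depends only on $n$, $\wt(K_1),\ldots,\wt(K_k)$, and the combinatorial type of $\widetilde{K}$ — all quantities independent of $\lambda$ — summing gives $\E_\lambda[X^k]$ independent of $\lambda$, as claimed. The only genuine work is bookkeeping: the main obstacle (already handled by \Lem{lem:size}) is ensuring that the cycle-length hypothesis $mk+1$ on $\lambda$ covers the worst case where $|\widetilde{K}|$ actually attains $mk$. The choice of threshold $mk+1$ is tight precisely to accommodate products of $k$ disjoint constraints of size $m$.
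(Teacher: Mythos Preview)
Your proposal is correct and follows essentially the same route as the paper: expand $X^k$ over $k$-tuples of constraints, take the union $\widetilde{K}$ (the paper calls it $Y$), and split into the same three cases (not well-defined, well-defined but cyclic, acyclic) with the same appeal to \Lem{lem:size}. The only cosmetic difference is that you explicitly note $I_{K_1}\cdots I_{K_k} = I_{\widetilde{K}}$, whereas the paper passes directly to the joint probability.
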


\begin{proof}
Express $X=\sum_{P\in \mathcal{C}} \wt(P) I_P$, where $\text{size}(\mathcal{C})=m$. We start by decomposing the variable $X^k$ into random indicator variables.
\begin{align*}
\E_{\lambda}[X^k] &= \sum_{P_1 \in \mathcal{C}} \sum_{P_2 \in \mathcal{C}} \dots \sum_{P_k \in \mathcal{C}} \prod_{i=1}^k \wt(P_i) \cdot \E_{\lambda}[I_{P_i}] \\
&= \sum_{P_1 \in \mathcal{C}} \sum_{P_2 \in \mathcal{C}} \dots \sum_{P_k \in \mathcal{C}} \left(\prod_{i=1}^k \wt(P_i)\right) \pr_{\lambda}\left[\bigwedge_{i=1}^k \omega \text{ satisfies } P_i \right].
\end{align*}
We therefore continue by evaluating each of the individual probabilities in the sum.  

Fix some tuple $P_1,P_2, \dots, P_k$, and let $Y$ be the union of all of these constraints excluding repeats.  Write $Y = \{ (i_1,j_1), \dots, (i_s, j_s) \}$, noting that all the pairs are distinct.  We split into three cases.
\begin{itemize}    
\item \textbf{Case 1}: Suppose first that $Y$ is not well defined. Then there must be some repeated $i_t$ or $j_t$. Since we excluded repeats, there must be pairs of the form $\{(i_t,a),(i_t,b)\}$ or $\{(a,j_t),(b,j_t)\}$. However $\omega(i_t)$ and $\omega^{-1}(j_t)$ can only take one value, so the probability of $Y$ being satisfied is zero.
    
\item \textbf{Case 2}: Suppose instead that $Y$ is not acyclic. Then by \Lem{lem:size}, we have that $\pr_{\lambda}[w \text{ satisfies } Y] = 0$.

\item \textbf{Case 3}: $Y$ is well defined, and no subsets of the values in $\omega(i_1) = j_1 , \omega(i_2) = j_2 , \dots, \omega(i_s) = j_s$ form a cycle.  Then this is a set of acyclic constraints of size at most $mk$.  By the previous proposition we therefore have that
    $$        \pr_{\lambda}[ \omega \text{ satisfies } Y] = \frac{1}{(n-1)(n-2)\dots(n-s)}.
    $$
\end{itemize}    

In particular, none of these probabilities depend on the choice of $\lambda$, so the result follows.  
\end{proof}

\begin{corollary}\label{cor:sizelowerbound}
    Let $X$ be a permutation statistic, and let $\lambda \vdash n$. Suppose that $\E_{\lambda}[X]$ depends on the number of parts of $\lambda$ of size $m$. Then any constraint set realizing $X$ must have size at least $m$.
\end{corollary}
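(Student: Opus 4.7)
The plan is to prove the corollary by contrapositive, using \Thm{thm:IndependenceHigherMoments} with $k = 1$ as the key ingredient. Suppose for contradiction that $X$ is realizable over some constraint set $\mathcal{C}$ with $\operatorname{size}(\mathcal{C}) = m' < m$. Then \Thm{thm:IndependenceHigherMoments} guarantees that whenever $\mu \vdash n$ has all parts of size at least $m'+1$, the expected value $\E_{\mu}[X]$ takes a single common value $c$ that does not depend on $\mu$.

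Next, I would observe that since $m' < m$, we have $m'+1 \le m$, so every partition of $n$ whose parts all have size at least $m$ automatically lies in the regime covered by the preceding paragraph. Consequently $\E_{\mu}[X] = c$ for every $\mu \vdash n$ with all parts of size $\ge m$. In particular, the expected value is the same on any two such partitions that differ only in the number $a_m$ of parts of size $m$ (the extra parts of size $m$ being exchanged, say, for one long part of size $\ge m+1$ together with the corresponding bookkeeping adjustments preserving $|\mu| = n$).

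Finally, I would unpack the hypothesis: "$\E_\lambda[X]$ depends on the number of parts of $\lambda$ of size $m$" is taken to mean that there exist two partitions $\lambda^{(1)}, \lambda^{(2)} \vdash n$ differing only in their multiplicities of $m$, and with all other parts of size $> m$, for which $\E_{\lambda^{(1)}}[X] \ne \E_{\lambda^{(2)}}[X]$. Such a pair has all parts of size $\ge m$, so by the previous paragraph they should give equal expected values—a contradiction. Hence $\operatorname{size}(X) \ge m$.

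The proof is essentially a one-line contrapositive of \Thm{thm:IndependenceHigherMoments}; the only genuine subtlety is pinning down the intended meaning of "depends on the number of parts of size $m$." The natural reading is the existential one above, in which case the argument is immediate. If instead one wished to allow witnessing partitions with some parts strictly smaller than $m$, one would need a strengthening of \Thm{thm:IndependenceHigherMoments}; but since the theorem only yields independence once all parts are sufficiently large, the existential reading is the only one the corollary can meaningfully assert, and no such strengthening is required.
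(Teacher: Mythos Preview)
Your argument is correct and is exactly the intended one: the paper states this corollary without proof, as an immediate contrapositive of \Thm{thm:IndependenceHigherMoments} with $k=1$. Your discussion of the interpretation of ``depends on $a_m$'' is appropriate and matches how the paper uses the corollary (e.g., in conjunction with the explicit first-moment formulas of Table~\ref{tab: SummaryTable} to pin down the sizes of $\des$, $\inv$, etc.).
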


\begin{remark}
Let $\mathcal{C}$ be a constraint set of size $m$. Clearly, if we can express $X=\sum_{P\in \mathcal{C}}I_P$, then any minimum-sized constraint set realizing $X$ has size at most $m$.

The above corollary shows that calculating the first moment of $X$ even just on specific conjugacy classes allows us to obtain a lower bound on the size of $X$. This approach allows us to explicitly calculate the size for many statistics. 

Once we have determined the size of $X$, we can then apply Theorem \ref{thm:IndependenceHigherMoments}, so we see that information on the higher moments of $X$ can be obtained from the first moment, further highlighting the importance of the latter.   
\end{remark}

\begin{remark}\label{remark:unweighted}
It will be useful later to write the expectation $\mathbb{E}_{\lambda}[X^{k}]$ from \Thm{thm:IndependenceHigherMoments}  more explicitly in the unweighted case, so we do this.

Let $\mathcal{A}$ be the set of all the acyclic constraints from amongst the tuples $P_1, \dots, P_k$ in the sum.  Let $\mathcal{A}_t$ be the set of all the acyclic constraints in $\mathcal{A}$ of size $t$. Using the three previous cases, we may write the required expectation as
    \begin{align*}
        \E_{\lambda}[X^k] &= \sum_{P \in \mathcal{A}} \pr_{\lambda}[\omega \text{ satisfies } P] \\
        &= \sum_{t} \frac{|\mathcal{A}_t|}{(n-1)(n-2)\dots(n-t)}.
    \end{align*}

\noindent This number is independent of the choice of $\lambda$ as long as it has parts of size at least $mk+1$. Observe that taking $X(\omega) = \maj(\omega)$ or $X(\omega) = d(\omega)=1+\des(\omega)$ yields \cite[Theorem~2]{FulmanJCTA1998}.
\end{remark}

We continue by showing that when a statistic is \emph{symmetric}, these moments are polynomial in $n$. We now define this precisely.

\begin{definition}
     Let $a_1,\ldots,a_{n_0}\in [n]$. A function $f:\{a_1,\ldots,a_{n_0}\}\rightarrow [n]$ is \emph{order-preserving} when $a_i<a_j$ if and only if $f(a_i)<f(a_j)$ for all $i,j\in [n_0]$. Note that any such function must be injective.
\end{definition}

\begin{definition} \label{def:SymmetricConstraintStatistic}
     Let $\mathcal{C}$ be a set of permutation constraints, and let $X$ be the unweighted constraint statistic induced by $\mathcal{C}$. Take some $P = \{ (i_1,j_1) \dots (i_\ell, j_\ell)\} \in \mathcal{C}$.  Let the distinct symbols amongst the $i_1, \dots, i_\ell, j_1, \dots, j_\ell$ be $1 \leq a_1 < a_2 < \dots < a_{n_0} \leq n$. If $f(P) := \{ (f(i_1),f(j_1)) \dots (f(i_\ell), f(j_\ell)) \} \in \mathcal{C}$ for all such choices of $P \in \mathcal{C}$ and order-preserving $f:\{a_1,\ldots,a_{n_0}\}\rightarrow [n]$, then we say that $X$ is \emph{symmetric}. 
\end{definition}

 \noindent We start by examining how this definition relates to some familiar statistics.

\begin{itemize}
    \item Inversions are symmetric: take any $P =\{ (a,b),(c,d) \} \in \mathcal{C}_{\text{inv}}$ and any order preserving injection $f: \{a,b,c,d\} \rightarrow [n]$.  Then we must have $a < c, b > d$, so $f(a) < f(c), f(b) > f(d)$.  Therefore $f(P)=\{ (f(a),f(b)),(f(c),f(d))\} \in \mathcal{C}_{\text{inv}}$.
    
    \item Descents cannot be realized as symmetric constraint statistics using constraints of size $2$. Let $\mathcal{C}_{i,j}$ be as defined in Example~\ref{ex:Cij}.  For example, take $P = \{(1,5), (2,4)\} \,{\in\mathcal{C}_{1,2}}\subseteq\mathcal{C}_{\text{des}}$.  Let $1 < 3 < 4 < 5$, with $f(1)=1, f(2)=3, f(4)=4, f(5)=5$.  Then $f(P) = \{ (1,5),(3,4) \}\, {\in\mathcal{C}_{1,3}} \not\subseteq\mathcal{C}_{\text{des}}$. We may iterate on this argument, replacing $(1,5),(2,4)$ with arbitrary values respecting the same relative ordering. It is not clear  whether descents can be realized using a symmetric constraint set of larger size.

    \item The number of inversions between excedances, as defined in \cite{blitvic2021permutations}, is symmetric.  This is because the constraints for this statistic are exactly the $(a,b),(c,d)$ with $a < c < d < b$, so the images of these elements under an order-preserving $f$ will give another valid constraint.
\end{itemize}

\noindent Given a symmetric permutation constraint statistic on $S_{n_0}$, there is also a natural way of extending this statistic to any $S_n$.

\begin{definition} \label{def:SymmetricExtension}
     Let $X$ be a symmetric permutation constraint statistic on $S_{n_0}$ induced by some $\mathcal{C}$ supported on $[n_0]$.  Then for any $S_n$, we can define a symmetric permutation constraint statistic $X_n$ on $S_n$ by starting with the set of constraints $\mathcal{C}$ for $X$ and constructing the following set of constraints $\mathcal{C}_n$ for $X_n$.
     \begin{itemize}
         \item If $n\leq n_0$, then let $\mathcal{C}_n$ contain all $P\in \mathcal{C}$ with support contained in $[n]$.
         \item If $n>n_0$, then let $\mathcal{C}_n$ contain all $P \in \mathcal{C}$, as well as all $f(P)$ for all order-preserving functions $f:[n_0] \rightarrow [n]$.  Note that we exclude repeated constraints in $\mathcal{C}_n$.
     \end{itemize} 
     Then by construction each $X_n$ is symmetric.
     We call $(X_n)$ a \emph{symmetric extension of $X$}.
\end{definition}

\begin{ex}
While the previous definition seems technical, there are several natural examples.
\begin{itemize}
    \item Consider the constraint $K=\{(1,2)\}$, and define the statistic $X$ on $S_2$ by $X=I_K$. Then the $(X_n)$ are the excedance statistics.
    \item Fix $\omega\in S_{m}$. Let $\mathcal{C}$ be the constraints of size $m$ in $S_{2m}$ that induce the permutation pattern statistic for $\omega$ in $S_{2m}$. Then each statistic in $(X_n)$ is the number of appearances of the permutation pattern $\omega$ for a given element in $S_n$. Note that choosing $\omega=(12)\in S_2$ results in the usual inversion statistics on $S_n$. 
\end{itemize}
\end{ex}

\begin{remark}
The preceding examples show that symmetric permutation constraint statistics are more general than permutation pattern statistics, as excedances cannot be expressed as a permutation pattern. See Remark~\ref{rmk:PermutationPatterns} for more discussion, as well as a comparison of our work with that of Gaetz and Pierson \cite{GaetzPierson}.
\end{remark}

\begin{remark}
In general, it is necessary to consider symmetric extensions starting from some sufficiently large $n_0$. Observe that both $\{(1,2)\}$ and $\{(1,2), (2,1)\}$ induce $\inv$ on $S_{2}$. However, the symmetric extension of $\{(1,2)\}$ yields the excedance statistic, while the symmetric extension of $\{ (1,2), (2,1)\}$ realizes transpositions. In the preceding example, we see that the symmetric extension starting with the inversion statistic on $S_4$ results in the inversion statistics on all $S_n$.
\end{remark}

With this definition in hand, we now show that when all parts of a partition are sufficiently large, the moments of any statistic constructed in this manner are given by a single polynomial dependent only on $n$.

\begin{theorem} \label{thm:PropHigherMomentPolynomial}
Fix $k, m \geq 1$. Let $(\lambda_{n})$ be a sequence of partitions, where $\lambda_{n} \vdash n$ and all parts of $\lambda_{n}$ have size at least $mk+1$. Let $(X_{n})$ be a symmetric extension of a symmetric permutation statistic $X=X_{n_0}$ induced by a constraint set of size $m$. There exists a polynomial $p_X(n)$ depending only on $X$ such that $p_X(n) = \E_{\lambda_{n}}[X_{n}^{k}]$.
\end{theorem}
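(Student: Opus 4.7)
The plan is to expand $\E_{\lambda_n}[X_n^k]$ via \Rmk{remark:unweighted} and then exploit the symmetry of $\mathcal{C}_n$ to show each piece is polynomial in $n$. Since every part of $\lambda_n$ has size at least $mk+1$, \Thm{thm:IndependenceHigherMoments} and \Rmk{remark:unweighted} apply to give
\[
\E_{\lambda_n}[X_n^k] \;=\; \sum_{t=1}^{mk} \frac{|\mathcal{A}_t(n)|}{(n-1)(n-2)\cdots(n-t)},
\]
where $\mathcal{A}_t(n)$ is the set of ordered $k$-tuples $(P_1,\ldots,P_k)$ with each $P_i \in \mathcal{C}_n$ whose union $\bigcup_i P_i$ is a well-defined acyclic constraint of size exactly $t$. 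Thus it suffices to show that each summand is a polynomial in $n$.

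Next I would group the tuples in $\mathcal{A}_t(n)$ by \emph{pattern}: two tuples are equivalent if some order-preserving bijection between their supports carries one to the other componentwise. Because $X$ is symmetric, $\mathcal{C}_n$ is closed under such relabelings, and because an order-preserving bijection between two $s$-element subsets of $[n]$ is uniquely determined by its domain and codomain, every equivalence class with support of size $s$ contributes exactly $\binom{n}{s}$ distinct tuples to $\mathcal{C}_n$, one for each $s$-subset. The number of equivalence classes is finite and depends only on $X$, $k$, and $m$, and since a well-defined acyclic constraint of size $t$ is a disjoint union of directed paths, its support has size $s$ with $t+1 \leq s \leq 2t$. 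This yields
\[
|\mathcal{A}_t(n)| \;=\; \sum_{s=t+1}^{2t} c_{s,t}\binom{n}{s},
\]
for nonnegative integers $c_{s,t}$ independent of $n$. If $n < s$ then there are no realizations, but $\binom{n}{s} = 0$ in that case, so the formula is correct regardless of whether $n \leq n_0$ or $n > n_0$ in \Def{def:SymmetricExtension}.

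Finally, I would simplify each ratio. Since $s \geq t+1$, the factors $(n-1)(n-2)\cdots(n-t)$ cancel exactly into the numerator of $\binom{n}{s}$, giving
\[
\frac{\binom{n}{s}}{(n-1)(n-2)\cdots(n-t)} \;=\; \frac{n\,(n-t-1)(n-t-2)\cdots(n-s+1)}{s!},
\]
a polynomial in $n$ of degree $s-t$. Summing, the function
\[
p_X(n) \;=\; \sum_{t=1}^{mk}\sum_{s=t+1}^{2t}\frac{c_{s,t}}{s!}\,n\,(n-t-1)(n-t-2)\cdots(n-s+1)
\]
is a polynomial of degree at most $\max(s-t) \leq t \leq mk$, which proves the claim and simultaneously controls the degree promised in the subsequent remark. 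The main obstacle will be establishing the clean, stabilizer-free count of $\binom{n}{s}$ representatives per pattern; this rests on the fact that the support of a well-defined constraint is a totally ordered subset of $[n]$, and therefore admits a \emph{unique} order-preserving identification with any $s$-subset of $[n]$, so no nontrivial automorphisms can arise. A secondary technical point is that \Def{def:SymmetricExtension} treats $n \leq n_0$ and $n > n_0$ asymmetrically, but the symmetry hypothesis on $X$ guarantees that in both regimes $\mathcal{C}_n$ consists of precisely those patterns whose support fits in $[n]$, so the polynomial identity above persists throughout the range of admissible $n$.
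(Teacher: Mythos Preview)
Your proposal is correct and follows essentially the same route as the paper: expand via \Rmk{remark:unweighted}, stratify the acyclic $k$-tuples by size $t$ and support size $s$, use symmetry to count each stratum as $c_{s,t}\binom{n}{s}$ (the paper's $a_{t,s}$), and then cancel $(n-1)\cdots(n-t)$ against $\binom{n}{s}$ to obtain the polynomial $\frac{c_{s,t}}{s!}\,n(n-t-1)\cdots(n-s+1)$ of degree $s-t\le mk$. The only slip is cosmetic: the $\binom{n}{s}$ tuples per pattern land in $\mathcal{A}_t(n)$, not in $\mathcal{C}_n$.
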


\begin{proof}
    As in Theorem \ref{thm:IndependenceHigherMoments}, it suffices to consider $\mathcal{A}_n= \bigcup_{i} P_{n,i}$, where the union runs over all well-defined acyclic $k-$tuples of constraints in $X_n$. Let $\mathcal{A}_{n,t}\subseteq \mathcal{A}_n$ be the constraints of size $t$. Note that each constraint $P \in \mathcal{A}_n$ is a tuple of constraint, and multiple constraints may involve the same elements. Recall that the support of a constraint $P = \{ (i_1,j_1), \dots, (i_\ell,j_\ell) \} \in \mathcal{A}_n$ is the set of distinct elements among the $i_1, \dots, i_\ell, j_1, \dots, j_\ell$.  Define $\mathcal{A}_{n,t,s}\subseteq \mathcal{A}_{n,t}$ be the constraints of size $t$ with support on $s$ elements, where acyclicity of elements in $\mathcal{A}_{n,t}$ implies $t+1\leq s\leq 2t$.
    Then we have from Remark \ref{remark:unweighted} that
    \begin{equation}\label{nts-equation}
    \begin{split}
        \E_{\lambda_n}[X_n^k] &= \sum_{t=1}^{mk} \frac{|\mathcal{A}_{n,t}|}{(n-1)(n-2)\dots(n-t)} \\
         &= \sum_{t=1}^{mk}\left( \frac{1}{(n-1)(n-2)\dots(n-t)} \sum_{s=t+1}^{2t} |\mathcal{A}_{n,t,s}| \right).
        \end{split}
    \end{equation}
    
    Now let $\mathcal{A}_{n,t,s}'\subseteq \mathcal{A}_{n,t,s}$ be the constraints that are supported on $[s]$. Observe that when $n<s$, $\mathcal{A}_{n,t,s}'=\emptyset$, and since $X_n$ is formed as the symmetric extension of $X_{n_0}$, this $\mathcal{A}_{n,t,s}'$ is independent of $n$ for $n\geq s$, so we call this common set $\mathcal{A}_{t,s}$. Furthermore, since $X_n$ is symmetric, for $n\geq s$, we can express
    \[\mathcal{A}_{n,t,s}= \bigcup_{f} \bigcup_{P\in \mathcal{A}_{t,s}} f(P),\]
    where the first union is over all order-preserving $f$. Now as each $P$ uses all elements of $[s]$ and each $f$ is determined by its image in $[n]$, we have that $f_1(P_1) = f_2(P_2)$ can only occur if $f_1 = f_2$ and $P_1 = P_2$. Then letting $a_{t,s}=|\mathcal{A}_{t,s}|$, we have that for $n\geq s$,
    \[|\mathcal{A}_{n,t,s}|={n\choose s} a_{t,s},\]
    as there are ${n \choose s}$ order-preserving functions $f:[s]\to [n]$. Letting $I_s(n)$ be the indicator function for $n\geq s$, we see that (\ref{nts-equation}) can be rewritten as
    \begin{equation}\label{ts-equation}
    \begin{split}
        \E_{\lambda_n}[X_n^k] &= \sum_{t=1}^{mk}\left( \frac{1}{(n-1)(n-2)\dots(n-t)} \sum_{s=t+1}^{2t} {n\choose s}a_{t,s}I_s(n)\right) \\
        & = \sum_{t=1}^{mk} \sum_{s=t+1}^{2t} {n\choose s}\frac{a_{t,s}I_s(n)}{(n-1)(n-2)\dots(n-t)}.
    \end{split}
    \end{equation}
    Observe that $s\geq t+1$, and when $n\geq s$, we have that
    \begin{equation*}
        \begin{split}
            \binom{n}{s}\cdot  \frac{1}{(n-1)(n-2)\dots(n-t)} & = \frac{1}{s!} \cdot  \frac{n(n-1)(n-2)\dots(n-s+1)}{(n-1)(n-2)\dots(n-t)} \\
            & = \frac{1}{s!}\cdot n(n-t-1)\ldots (n-s+1)
        \end{split}
    \end{equation*}
    is a polynomial in $n$ of degree $s-t$. Furthermore, $\lambda_n$ has all parts of size at least $mk+1>t$, so $n\geq mk+1 >t$. When values of $n$ with $t<n<s$ are substituted, the above polynomial vanishes. Hence, we can rewrite (\ref{ts-equation}) and omit the $I_s$ indicator function to obtain
    \begin{equation}\label{ts-equation2}
    \begin{split}
        \E_{\lambda_n}[X_n^k] & = \sum_{t=1}^{mk} \sum_{s=t+1}^{2t}\frac{a_{t,s}}{s!}\cdot n(n-t-1)\ldots (n-s+1).
    \end{split}
    \end{equation}
    We conclude that (\ref{ts-equation}) is a polynomial in $n$ of degree \[\max_{P \in \mathcal{A}_{2mk}} (|\supp(P)| - \text{size}(P))\leq mk.\qedhere \]
\end{proof}
\begin{remark}
The proof of the preceding result gives a method for finding $p_X(n)$, which we illustrate with an example. Consider the mean of the inversion statistic on conjugacy classes $\lambda_n$ with cycle lengths of at least $3$, so that $m=2$ and $k=1$ in (\ref{ts-equation2}). In the summation of (\ref{ts-equation}), the only nonzero values involve $t=2$, which implies $s\in \{3,4\}$. Of the constraints in $\inv$ using only values in the sets $[3]$ and $[4]$, we see that the acyclic ones that use all values are
\[\mathcal{A}_{2,3}=\left\{ \{(1,3),(2,1)\},\{(1,2),(3,1)\},\{(1,3),(3,2)\},\{(2,3),(3,1)\}\right\},\]
\[\mathcal{A}_{2,4}=\left\{ \{(1,4),(2,3)\},\{(1,4),(3,2)\},\{(1,3),(4,2)\},\{(2,4),(3,1)\},\{(2,3),(4,1)\},\{(3,2),(4,1)\}\right\}.\]
Then (\ref{ts-equation2}) becomes
\[\E_{\lambda_n}[\inv]=\frac{4}{3!}\cdot n+\frac{6}{4!}\cdot n(n-3)=\frac{3n^2-n}{12},\]
which agrees with our Corollary \ref{cor:E-des-maj-inv}. For higher moments, explicit description of acyclic constraints in terms of $k$-tuples becomes significantly more complex, and this method becomes computationally very difficult.
\end{remark}

In the case of certain statistics such as inversions, we can determine much more about the structure of this polynomial. 

\begin{proposition} \label{prop:HigherMomentsSectionInversions}
    Let $\lambda$ be a partition of $n$ with all parts of size at least $2k+1$. Then $\E_{\lambda}[\mathrm{inv}^k]$ is a polynomial in $n$ of degree $2k$ with leading coefficient $2^{-2k}$.
\end{proposition}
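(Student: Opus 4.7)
My plan is to specialize the formula derived in the proof of \Thm{thm:PropHigherMomentPolynomial} to the inversion statistic and identify its leading coefficient directly. Since $\inv$ is realized by the symmetric constraint set $\mathcal{C}_{\mathrm{inv}}$ of size $m = 2$, formula (\ref{ts-equation2}) specializes to
\[
\E_{\lambda}[\inv^k] \;=\; \sum_{t=1}^{2k} \sum_{s=t+1}^{2t} \frac{a_{t,s}}{s!} \cdot n(n-t-1)(n-t-2)\cdots(n-s+1),
\]
where $a_{t,s}$ counts the well-defined acyclic ordered $k$-tuples of inversion constraints whose union has size $t$ and support exactly $[s]$. Each $(t,s)$-summand is a polynomial in $n$ of degree $s - t$ with leading coefficient $\frac{a_{t,s}}{s!}$, since the product of the $s - t$ monic linear factors is monic.

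The degree analysis is then immediate. The inequalities $s \leq 2t$ and $t \leq 2k$ together force $s - t \leq t \leq 2k$, with both becoming equalities only when $(t, s) = (2k, 4k)$. Hence the degree of $\E_\lambda[\inv^k]$ is at most $2k$, and the coefficient of $n^{2k}$ equals $\frac{a_{2k, 4k}}{(4k)!}$ provided this is nonzero.

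It therefore remains to compute $a_{2k, 4k}$, which is the main combinatorial step. A contributing $k$-tuple $(P_1, \dots, P_k)$ has total support of size $4k$ and total union of size $2k$, which forces each $|\supp(P_i)| = 4$ with the supports partitioning $[4k]$. Ordered partitions of $[4k]$ into $k$ labeled blocks of size $4$ number $(4k)!/(4!)^k$, and for each $4$-element block $B = \{a_1 < a_2 < a_3 < a_4\}$, a choice of position set $\{p_1 < p_2\} \subset B$ uniquely determines the inversion constraint $\{(p_1, v_2), (p_2, v_1)\}$ with $\{v_1 < v_2\} = B \setminus \{p_1, p_2\}$, giving $\binom{4}{2} = 6$ valid constraints per block. (Acyclicity is automatic, as each such $P_i$ has two disjoint directed edges, and distinct blocks have disjoint supports.) Thus
\[
a_{2k, 4k} \;=\; \frac{(4k)!}{(4!)^k} \cdot 6^k \;=\; \frac{(4k)!}{4^k},
\]
so the leading coefficient of $\E_\lambda[\inv^k]$ in $n$ is $\frac{1}{4^k} = 2^{-2k}$, which is in particular nonzero and thus confirms that the degree is exactly $2k$. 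No substantive obstacle arises beyond verifying the block enumeration and the support-disjointness forced by $s = 2t$.
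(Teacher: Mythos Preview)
Your proof is correct and follows essentially the same approach as the paper: you specialize formula~(\ref{ts-equation2}), identify $(t,s)=(2k,4k)$ as the unique term contributing degree $2k$, and compute $a_{2k,4k}=(4k)!/4^k$ by partitioning $[4k]$ into $k$ ordered blocks of size $4$ with $6$ inversion constraints each. The only cosmetic difference is that the paper lists the six constraints on $\{a<b<c<d\}$ explicitly, whereas you obtain the count $6=\binom{4}{2}$ by noting that choosing the two positions determines the constraint; your added remark that support-disjointness forces well-definedness and acyclicity is a nice explicit justification that the paper leaves implicit.
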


\begin{proof}
    The polynomiality follows from \Thm{thm:PropHigherMomentPolynomial}. From the proof of this Theorem we also have that
    \begin{align}\label{inv-moments}
        \E_{\lambda}[\text{inv}^k] = \sum_{t=1}^{2k} \sum_{s=t+1}^{2t} \frac{a_{t,s}}{s!}\cdot n(n-t-1)\ldots (n-s+1).
    \end{align}
    Recall that $a_{t,s}$ is the number $k$-tuples $\{(a,b),(c,d)\}$ with $a < c, b > d$ that consist of $t$ distinct pairs and use exactly the elements in $[s]$. The degree of this polynomial corresponds to when $s-t \leq 2k$ is maximal. Note that $s-t=2k$ can occur only when $s=4k$ and $t=2k$, so it suffices to show that $a_{2k,4k}$ is nonzero. Hence, we consider $2k$ distinct pairs using all elements in $[4k]$.

    There are $\binom{4k}{4,4,\dots,4}$ ways to partition $[4k]$ into $k$ sets of four symbols.  For each set of four symbols $\{a,b,c,d\}$ suppose that $a<b<c<d$. Then there will be $6$ ways to put this set into two pairs which relate to an inversion constraint, which are
    $$ \{(a,c) , (d,b)\},\, \{(a,d) , (b,c)\}, \,  \{(a,d) , (c,b)\},\, \{(b,c) , (d,a)\}, \,  \{(c,b) , (d,a)\},\, \{(b,d) , (c,a)\}.
    $$
    Therefore in total we have $a_{2k,4k} = \binom{4k}{4,4,\dots,4} 6^k = (4k)!/4^k$.  Substituting this back into (\ref{inv-moments}) gives a leading coefficient of $1/4^k$ for the $x^{2k}$ term as required.
\end{proof}

As an application, we can use polynomial interpolation on $2k+1$ values of $n$ to explicitly compute $\E_{\lambda}[\inv^{k}]$ when all parts of $\lambda$ have size at least $2k+1$. The case of the second moment of $\inv$ is given below.

\begin{corollary} \label{cor:InversionSecondMoment}
    Let $\lambda$ be a partition of $n$ with all parts of size at least $5$. Then
    \[\E_{\lambda}[\inv^2]=\frac{1}{16}n^{4}-\frac{1}{72}n^{3}-\frac{1}{80}n^{2}-\frac{49}{360}n,\]
    and consequently, 
    \[\operatorname{Var}_{\lambda}[\inv]=\frac{1}{36}n^3-\frac{7}{360}n^2-\frac{49}{360}n.\]
\end{corollary}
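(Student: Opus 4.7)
The plan is to combine Proposition \ref{prop:HigherMomentsSectionInversions} with polynomial interpolation. By that proposition, $\E_{\lambda}[\inv^{2}]$ agrees with a polynomial $p(n)$ in $n$ of degree $4$ with leading coefficient $2^{-4} = 1/16$ whenever all parts of $\lambda \vdash n$ are at least $5$. Moreover, inspection of the explicit formula (\ref{ts-equation2}) in the proof of Theorem \ref{thm:PropHigherMomentPolynomial} shows that every summand contains a factor of $n$, so the constant term of $p(n)$ vanishes. Hence
\[
p(n) = \tfrac{1}{16} n^{4} + c_{3} n^{3} + c_{2} n^{2} + c_{1} n
\]
for unknown coefficients $c_{1}, c_{2}, c_{3}$ that depend only on the statistic $\inv$.

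To pin down $c_{1}, c_{2}, c_{3}$, I would compute $\E_{(n)}[\inv^{2}]$ for the conjugacy class of $n$-cycles for three values $n \in \{5, 6, 7\}$. Each of these conjugacy classes has all parts $\geq 5$, so Theorem \ref{thm:IndependenceHigherMoments} guarantees agreement with $p(n)$. The direct calculation is a finite enumeration over the $(n-1)!$ representatives of the single $n$-cycle class; for $n \leq 7$ this is easily done by computer (or by hand with care at $n = 5$). Plugging the three resulting numerical values into the expression above yields a $3 \times 3$ linear system in $c_{1}, c_{2}, c_{3}$, which I would solve symbolically to obtain $c_{3} = -1/72$, $c_{2} = -1/80$, $c_{1} = -49/360$, giving the stated expression for $\E_{\lambda}[\inv^{2}]$.

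For the variance, observe that since all parts of $\lambda$ have size at least $5$, we have $a_{1} = a_{2} = 0$, so Corollary \ref{cor:E-des-maj-inv} gives $\E_{\lambda}[\inv] = n(3n-1)/12$. Then
\[
\bigl(\E_{\lambda}[\inv]\bigr)^{2} = \frac{n^{2}(3n-1)^{2}}{144} = \frac{9 n^{4} - 6 n^{3} + n^{2}}{144},
\]
and subtracting this from $\E_{\lambda}[\inv^{2}]$ causes the $n^{4}$ terms to cancel exactly (since $1/16 = 9/144$), while routine common-denominator arithmetic on the remaining coefficients yields $\tfrac{1}{36} n^{3} - \tfrac{7}{360} n^{2} - \tfrac{49}{360} n$, as claimed.

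The main obstacle is simply carrying out the small-$n$ enumeration accurately; nothing conceptual remains once Proposition \ref{prop:HigherMomentsSectionInversions} is in hand. As an alternative that avoids any numerical enumeration, one could instead compute directly the constants $a_{t,s}$ appearing in (\ref{ts-equation2}) for $k = 2$, by classifying pairs of inversion-type constraints $\{(a,b),(c,d)\}\cup\{(a',b'),(c',d')\}$ (up to identifying repeated edges) by their total number $t \in \{2,3,4\}$ of distinct directed edges and their support size $s \in \{t+1, \ldots, 2t\}$, checking acyclicity of the resulting edge multiset, and normalizing; this route is more combinatorially transparent but considerably more tedious to execute.
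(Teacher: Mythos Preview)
Your proposal is correct and follows essentially the same approach as the paper: compute $\E_{(n)}[\inv^2]$ on the $n$-cycle class for a few small $n$ and interpolate, then subtract $(\E_\lambda[\inv])^2$ for the variance. The only difference is that the paper uses five data points ($n=5,\ldots,9$), while you exploit the known leading coefficient from Proposition~\ref{prop:HigherMomentsSectionInversions} and the vanishing constant term visible in~(\ref{ts-equation2}) to get by with three---a minor but pleasant optimization, not a different method.
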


\begin{proof}
We consider the conjugacy class $C_{(n)}$ corresponding to full cycles in $S_n$. Using code, we find the following values:
\begin{align*}
&\E_{(5)}[\inv^2]=109/3, \\ 
&\E_{(6)}[\inv^2]=1151/15, \\ 
&\E_{(7)}[\inv^2]=2156/15, \\
&\E_{(8)}[\inv^2]=247, \\ 
&\E_{(9)}[\inv^2]=3977/10, 
%&\E_{(10)}[\inv^{2}] = 1217/2, \\
%&\E_{(11)}[\inv^{2}] = 26807/30, \\
%&\E_{(12)}[\inv^{2}] = 38057/30.
\end{align*} 
\noindent The result for $\E_{\lambda}[\inv^2]$ follows by polynomial interpolation, and $\operatorname{Var}_{\lambda}[\inv]=\E_{\lambda}[\inv^2]-(\E_{\lambda}[\inv])^2$ then follows by direct calculation.
\end{proof}

\begin{remark}
We compare \Cor{cor:InversionSecondMoment}  with Feller's corresponding result for the full $S_n$ \cite[p.~257, equations (6.1)-(6.3)]{Feller}:
\[\E_{S_{n}}[\inv]=\frac{1}{4}n(n-1),\]

\[\E_{S_{n}}[\inv^2]=\frac{1}{16}n^{4}-\frac{7}{72}n^{3}+\frac{5}{48}n^{2}-\frac{5}{72}n,\]

\[\operatorname{Var}_{S_{n}}[\inv]=\frac{1}{72}(2n^3+3n^2-5n).\]
We note that leading terms coincide.
\end{remark}

\section{Conclusion}

In this paper, we investigated the distributions of various permutation statistics on individual conjugacy classes. We first introduced general notions of permutation statistics, including (i) weighted inversion statistics, which generalized inversions, major index, descents, and baj, and (ii) permutation constraints. We utilized  the notion of permutation constraints to reason about arbitrary permutation statistics. Precisely, we showed that the higher moments are  independent of the conjugacy class indexed by the partition $\lambda \vdash n$, provided all parts of $\lambda$ are sufficiently large. For permutation statistics realizable over symmetric constraints, we were further able to establish polynomiality for the higher moments on individual conjugacy classes indexed by $\lambda \vdash n$, again provided that all parts of $\lambda$ are sufficiently large. Our work leaves open several questions.

In \Prop{prop:Conjclass-to-Sn}, we showed that for any conjugacy class $\lambda$ and a weighted inversion statistic $X$, $\E_{\lambda}[X]$ can be written as $\E_{S_{n}}[X]$ plus some error term $f_{n}^{X}(a_{1}, a_{2})$, which is a degree $2$ polynomial depending only on $X$ and $a_{i}$ ($i = 1, 2$), the number of cycles of size $i$ in $\lambda$. As our independence results in Section~\ref{sec:highermoments} require that all parts of $\lambda$ be sufficiently large, we suspect that \Prop{prop:Conjclass-to-Sn} can be extended in the following manner.

\begin{problem}
Show that $\E_{\lambda}[X^{k}] = \E_{S_{n}}[X^{k}] + f_{n}^{X^{k}}(a_{1}, \ldots, a_{2k})$, where $a_{i}$ is the number of cycles of length $i$ in $\lambda$, and $f_{n}^{X^{k}}$ is a polynomial of degree at most $2k$,
(necessarily) satisfying the condition \[\sum_{\lambda\vdash n} z_{\lambda}^{-1} f_{n}^{X^{k}}(a_{1}, \ldots, a_{2k})=0.\]
\end{problem}

Our technique in establishing \Prop{prop:Conjclass-to-Sn} required detailed case analysis. Moving to even the second moment, the number of cases grows substantially. It would be of interest to find a tractable technique that easily extends to higher moments.

As we have not only an independence result, but also polynomiality on the higher moments of permutation statistics realizable over symmetric constraint sets, it seems plausible that such statistics admit a nice asymptotic distribution. In particular, a central limit theorem for descents on individual conjugacy classes is known \cite{FulmanJCTA1998, Kim2019DistributionOD, KimLee2020}. We thus ask the following.

\begin{problem}
Fix $k, m \geq 1$. Let $(X_{n})$ be a symmetric extension of a symmetric permutation statistic of size $m$. Let $\lambda_{n}$ be a partition of $n$, with each part of size at least $mk+1$. Establish a central limit theorem for $(X_{n})$ on $\lambda_{n}$.
\end{problem}

While we have established that a number of statistics such as $\inv$, $\exc$, $\aexc$, $\text{cdasc},$ and $\text{cddes}$ are symmetric, we have been unable to show that any of the statistics in this paper are \textit{not} symmetric. In particular, we do not have tractable conditions to show that a permutation statistic is not symmetric. Thus, we ask the following.

\begin{problem}
Provide a characterization of when a permutation statistic is realizable over a symmetric constraint set.
\end{problem}

In light of \Thm{thm:PropHigherMomentPolynomial} and the fact that the first moment of $\cdes$ is a rational function on any individual conjugacy class (\Thm{thm:cDes-1st-momentS}), we have that the family $(\cdes_{n})$ cannot be realized as the symmetric extension of any permutation statistic $X$. We conjecture that no individual $\cdes_{m}$ is itself symmetric. However, it is not clear how to establish this. Furthermore, we conjecture that $\des, \maj, \baj,$ and $\baj-\inv$ are not realizable over any symmetric permutation constraints or as the symmetric extensions of any permutation statistic.

Since our work in this paper establishes results for the  Coxeter group of type $A$, it is natural to ask the following.

\begin{problem}
Extend the results of this paper to other Coxeter groups.
\end{problem}

It is likely that the calculations would need to be updated to the setting of the given family of Coxeter groups being considered, but that the techniques in this paper might still apply. Ideally, one might hope for a general technique that can handle all  Coxeter groups without redoing the calculations for each such family.

Given a statistic $X$ on the symmetric group $S_n$, the first moments $\E_\lambda[X]$ are class functions, and may thus be interpreted as the character of a possibly virtual representation of $S_n$. Equation~\eqref{eqn:lambda-to-Sn}, which gives the first moment of $X$ on all of $S_n$,  is then precisely the multiplicity of the trivial module in some (virtual) representation of $S_n$. Thus one could ask if there is a representation-theoretic interpretation of our results, beyond the connection with character polynomials as in \cite{GaetzPierson}.

\begin{problem}
Investigate representation-theoretic interpretations of these results.
\end{problem}

\bibliographystyle{alphaurl}
\bibliography{Bibliography}

\end{document}